\newtheorem{thm}[subsection]{Theorem}
\newtheorem{lem}[subsection]{Lemma}
\newtheorem{prop}[subsection]{Proposition}
\newtheorem{cor}[subsection]{Corollary}
\theoremstyle{definition}
\newtheorem{ex}[subsection]{Example}
\newtheorem{defn}[subsection]{Definition}
\newtheorem{rmk}[subsection]{Remark}
\newtheorem{defprop}[subsection]{Definition/ Proposition}
\let\saveqed\qed
\renewcommand\qed{%
   \ifmmode\displaymath@qed
   \else\saveqed
   \fi}
\newcommand{\Pic}{\mathrm{Pic}}
\newcommand{\Spec}{\mathrm{Spec} \ }
\newcommand{\uSpec}{\underline{\mathrm{Spec}}}
\newcommand{\Proj}{\mathrm{Proj} \ }
\newcommand{\Qcoh}{\mathrm{Qcoh}}
\newcommand{\Isom}{\mathrm{Isom}}
\newcommand{\Aut}{\mathrm{Aut}}
\newcommand{\uIsom}{\underline{\mathrm{Isom}}}
\newcommand{\uAut}{\underline{\mathrm{Aut}}}
\newcommand{\sGp}{\mathrm{ShGroup}}
\newcommand{\Ba}{\mathrm{Band}}
\newcommand{\Hom}{\mathrm{Hom}}
\newcommand{\Hex}{\mathrm{Hex}}
\newcommand{\Int}{\mathrm{Int}}
\newcommand{\pr}{\mathrm{pr}}
\newcommand{\ms}{\mathscr}
\newcommand{\cha}{\mathrm{char}}
\newcommand{\thickslash}{\mathbin{\!\!\pmb{\fatslash}}}
\newcommand{\Z}{\mathbb{Z}}
\title{Picard Groups of Stacky Curves}
\author{Rose Lopez}
\begin{document}
\maketitle

\begin{abstract}
We describe the Picard group of a tame stacky curve as an extension of two groups, which depend on the gerbe class of the curve over its rigidification, a stacky curve with trivial generic stabilizer, and the residual gerbes of the rigidification. After presenting the general formulation, we compute some specific examples. In particular, we provide an example highlighting the fact that the extension of the groups truly depends on the class of the gerbe. 
\end{abstract}

\section{Introduction}

In his famous 1965 paper \textit{Picard Groups of Moduli Problems} \cite{Mumfordart}, Mumford computed the Picard group of the moduli stack of elliptic curves over fields of characteristic different from 2 and 3.  This work was subsequently revisited by Fulton and Olsson \cite{PicM}, and some generalizations to certain modular curves were studied by Niles \cite{Niles}.  All of these works use the moduli interpretation of the stacks in question in essential ways.  The purpose of this article is to give a general method for computing the Picard group of an arbitrary tame stacky curve. Precisely, a stacky curve is a smooth, proper, geometrically connected Deligne-Mumford stack of dimension $1$ over a field.

There are two key cases we need to consider. The first case is for a tame stacky curve $\ms{Y}$ with trivial generic stabilizer. Here we compute the Picard group exactly in terms of the Picard group of its coarse moduli space and the stabilizer groups of the stacky points. The second case is for a gerbe $\ms X$ over a stacky curve with trivial generic stabilizer $\ms{Y}$, and here we are only able to compute the Picard group as an extension of two groups, one group being $\Pic (\ms{Y})$. We prove the following two theorems for these two cases. 

\begin{thm}
    Let $\ms{Y}$ be a tame stacky curve with trivial generic stabilizer and coarse moduli space $\pi:\ms{Y}\to X$. Let $x_1,\ldots, x_r$ be the stacky points of $\ms{Y}$ with stabilizer groups $\mu_{n_i}$, cyclic of order $n_i$. Then we have an exact sequence 
    $$\xymatrix{0 \ar[r] & \Pic(X)  \ar[r]^-{\pi^*} & \Pic(\ms{Y}) \ar[r]^-{\chi} & \prod_{i=1}^r \Z/n_i\Z \ar[r] & 0},$$
    where $\chi= (\chi_{x_1^*(-)},\ldots, \chi_{x_r^*(-)})$ describes the representations of the inertia on a line bundle pulled back to the residual gerbes at the stacky points.  
    Moreover, 
    $\Pic(\ms{Y})$ is the pushout of the diagram 
    $$\xymatrix{\Z^r \ar[r]^-{(\cdot n_1, \ldots, \cdot n_r)} \ar[d]^-{\phi} & \Z^r \ar[d] \\ \Pic (X) \ar[r]^-{\pi^*} & \Pic (\ms{Y})}$$
    where $\phi: \Z^r\to \Pic (X)$ is given by $e_i\mapsto I_{\pi(x_i)}$, for $e_i=(0, \ldots, 1, \ldots, 0)$ the $i$th basis vector, and $I_{\pi(x_i)}$ the ideal sheaf of $\ms{O}_{X}$-modules of the point $\pi(x_i)\in X$. 
\end{thm}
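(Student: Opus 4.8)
The plan is to prove the four-term exact sequence first and then extract the pushout description by a formal diagram chase. I would begin with the two easy directions. Since $\ms{Y}$ is tame with coarse moduli map $\pi$, one has $\pi_*\ms{O}_{\ms{Y}}\cong\ms{O}_X$, so the projection formula gives $\pi_*\pi^*L\cong L$ for every $L\in\Pic(X)$; this exhibits a retraction of $\pi^*$ and hence proves injectivity on the left. The residual gerbe $\mc{G}_i$ at $x_i$ is a $\mu_{n_i}$-gerbe, and pulling back along it identifies $\Pic(\mc{G}_i)\cong\Hom(\mu_{n_i},\mathbb{G}_m)\cong\Z/n_i\Z$; by definition $\chi=(x_1^*,\ldots,x_r^*)$ records the inertia character of the restriction of a line bundle to each $\mc{G}_i$. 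Because the composite $\mc{G}_i\to\ms{Y}\xrightarrow{\pi}X$ lands in a point of $X$ with trivial stabilizer, the restriction of any $\pi^*L$ to $\mc{G}_i$ carries the trivial character, so $\chi\circ\pi^*=0$ and the sequence is a complex.

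The heart of the argument, and what I expect to be the \emph{main obstacle}, is exactness in the middle, i.e.\ that $\ker\chi=\operatorname{im}\pi^*$. Given $M\in\ker\chi$ I would show that the counit $\pi^*\pi_*M\to M$ is an isomorphism. Away from the stacky points $\pi$ is an isomorphism, so the claim is local at each $x_i$, where I would pass to the strict henselization and use a local model $\ms{Y}\cong[\Spec B/\mu_{n_i}]$. There a line bundle is a $\mu_{n_i}$-equivariant invertible $B$-module; tameness makes $\mu_{n_i}$ linearly reductive, so such a module is, up to descent, determined by a single character, which is precisely the class read off by restriction to $\mc{G}_i$. The hypothesis $\chi(M)=0$ forces this character to be trivial at every $x_i$, so $M$ descends, $\pi_*M$ is invertible, and the counit is an isomorphism. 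This is the step where tameness and the linear reductivity of the stabilizers are essential.

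For surjectivity of $\chi$ I would produce tautological bundles. Using that $\ms{Y}$ is, étale-locally over each $\pi(x_i)$ (and in fact globally an iterated root stack over $X$ along the reduced points $\pi(x_i)$ with multiplicities $n_i$), there is a tautological invertible sheaf $T_i$ whose restriction to $\mc{G}_j$ is the standard generating character for $j=i$ and is trivial for $j\neq i$, so that $\chi(T_i)=e_i$; in particular the $T_i$ together surject onto $\prod_{i=1}^r\Z/n_i\Z$. By construction of the root stack these bundles come equipped with a canonical isomorphism
$$T_i^{\otimes n_i}\;\cong\;\pi^*\ms{O}_X(-\pi(x_i))\;=\;\pi^*I_{\pi(x_i)}.$$

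It remains to deduce the pushout statement, which now follows formally. Pushing out the short exact sequence
$$0 \longrightarrow \Z^r \xrightarrow{(\cdot n_1,\ldots,\cdot n_r)} \Z^r \longrightarrow \prod_{i=1}^r\Z/n_i\Z \longrightarrow 0$$
along $\phi\colon\Z^r\to\Pic(X)$ produces a group $P$; since the left-hand map is injective, the resulting sequence $0\to\Pic(X)\to P\to\prod_{i=1}^r\Z/n_i\Z\to 0$ is again short exact. The universal property of the pushout, applied to $\pi^*\colon\Pic(X)\to\Pic(\ms{Y})$ and to the assignment sending the $i$-th generator of the second copy of $\Z^r$ to $T_i$ — which agree on $\Z^r$ precisely because of the relation $T_i^{\otimes n_i}=\pi^*I_{\pi(x_i)}$ — yields a canonical map $P\to\Pic(\ms{Y})$. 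This map fits into a morphism of short exact sequences inducing the identity on $\Pic(X)$ and on $\prod_{i=1}^r\Z/n_i\Z$, so the five lemma gives $P\cong\Pic(\ms{Y})$, which is exactly the asserted pushout.
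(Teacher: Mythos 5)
Your proposal is correct and follows essentially the same route as the paper: exactness at the left and middle via the tame coarse-space equivalence for line bundles with trivial stabilizer characters (which the paper cites as a lemma and you re-sketch using linear reductivity and the local quotient structure), surjectivity of $\chi$ via the root-stack tautological bundles $T_i \cong \ms{I}_{x_i}$ satisfying $T_i^{\otimes n_i}\cong \pi^*I_{\pi(x_i)}$, and the pushout identification by the universal property plus the five lemma applied to the morphism of short exact sequences.
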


\begin{thm}
    Let $\ms{Y}$ be a stacky curve with trivial generic stabilizer and let $\ms{H}$ be a tame locally constant finitely presented band on $\ms{Y}$. Let $p:\ms{X}\to \ms{Y}$ be a gerbe banded by $\ms{H}$, with cohomology class $[\ms{X}]\in H^2(\ms{Y},\ms{H})$. We have an exact sequence
    $$\xymatrix{ 0\ar[r] & \Pic(\ms{Y}) \ar[r]^-{p^*}&  \Pic (\ms{X}) \ar[r]^-{\chi} & \mathrm{Hom}(\ms{H}, \mathbb{G}_m) \ar[r]^-{(-)_*[\ms{X}]} & H^2(\ms{Y},\mathbb{G}_m)},$$
    where $\chi$ describes the representation of $\ms{H}$ acting on the line bundle, and $(-)_*[\ms{X}]$ sends $(\epsilon:\ms{H}\to\mathbb{G}_m)$ to the image of $[\ms X]$ under the cohomology pushforward map $$\epsilon_*:H^2(\ms{Y},\ms{H})\to H^2(\ms{Y},\mathbb{G}_m).$$
\end{thm}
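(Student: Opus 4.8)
The plan is to express both Picard groups as degree-one $\mathbb{G}_m$-cohomology and to read the sequence off the Leray spectral sequence of the gerbe $p:\ms{X}\to\ms{Y}$. Using $\Pic(\ms{X})=H^1(\ms{X},\mathbb{G}_m)$ and $\Pic(\ms{Y})=H^1(\ms{Y},\mathbb{G}_m)$, I would invoke
$$E_2^{a,b}=H^a(\ms{Y},R^bp_*\mathbb{G}_m)\ \Longrightarrow\ H^{a+b}(\ms{X},\mathbb{G}_m)$$
and take its five-term exact sequence
$$0\to H^1(\ms{Y},p_*\mathbb{G}_m)\xrightarrow{p^*} H^1(\ms{X},\mathbb{G}_m)\to H^0(\ms{Y},R^1p_*\mathbb{G}_m)\xrightarrow{d_2} H^2(\ms{Y},p_*\mathbb{G}_m).$$
Everything then reduces to identifying the sheaves $p_*\mathbb{G}_m$ and $R^1p_*\mathbb{G}_m$ together with the two maps $p^*$ and $d_2$.

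The first step is the low-degree direct images. Because a gerbe is fppf-locally isomorphic to $B\ms{H}$ and the band $\ms{H}$ acts trivially on the structure sheaf, $\mathbb{G}_m$-valued functions on the fibres descend to the base, giving $p_*\mathbb{G}_m=\mathbb{G}_m$; this identifies the outer terms of the five-term sequence as $\Pic(\ms{Y})$ and $H^2(\ms{Y},\mathbb{G}_m)$, with $p^*$ the (injective) pullback. For $R^1p_*\mathbb{G}_m$, which is the sheafification of $U\mapsto\Pic(\ms{X}_U)$, I would use the local computation $\Pic(B\ms{H})=\Hom(\ms{H},\mathbb{G}_m)$; here the tameness of $\ms{H}$ makes it linearly reductive, so line bundles on $B\ms{H}$ are precisely its characters. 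This yields $R^1p_*\mathbb{G}_m=\underline{\Hom}(\ms{H},\mathbb{G}_m)$, and since $\ms{H}$ is locally constant and finitely presented, $H^0(\ms{Y},\underline{\Hom}(\ms{H},\mathbb{G}_m))=\Hom(\ms{H},\mathbb{G}_m)$. The edge map $\Pic(\ms{X})\to\Hom(\ms{H},\mathbb{G}_m)$ is, by its very construction, the operation of restricting a line bundle to the fibral gerbes and recording the resulting inertia character, i.e. the map $\chi$. So the first three terms and both maps $p^*$ and $\chi$ appear exactly as stated.

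The remaining point, which I expect to be the crux, is to identify the transgression $d_2:\Hom(\ms{H},\mathbb{G}_m)\to H^2(\ms{Y},\mathbb{G}_m)$ with $\epsilon\mapsto\epsilon_*[\ms{X}]$. My approach is naturality of the Leray spectral sequence in the coefficient sheaf. Running the same spectral sequence with coefficients in $\ms{H}$ gives $R^1p_*\ms{H}=\underline{\Hom}(\ms{H},\ms{H})$, and Giraud's theory of gerbes identifies the transgression of the identity section $\mathrm{id}_{\ms{H}}\in H^0(\ms{Y},\underline{\Hom}(\ms{H},\ms{H}))$ with the gerbe class $[\ms{X}]\in H^2(\ms{Y},\ms{H})$. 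A character $\epsilon:\ms{H}\to\mathbb{G}_m$ induces a morphism from the $\ms{H}$-spectral sequence to the $\mathbb{G}_m$-spectral sequence sending $\mathrm{id}_{\ms{H}}\mapsto\epsilon$ on $E_2^{0,1}$ and acting by $\epsilon_*$ on $E_2^{2,0}=H^2(\ms{Y},-)$; since $\epsilon_*$ commutes with $d_2$, this forces $d_2(\epsilon)=\epsilon_*[\ms{X}]$. The technical care concentrates here: one must check that the Giraud identification of the transgression with the gerbe class holds in this tame stacky setting, and that the map of first direct images induced by $\epsilon$ genuinely sends the identity section to $\epsilon$. Granting these, the five-term sequence is verbatim the asserted exact sequence, with exactness at $\Pic(\ms{X})$ and at $\Hom(\ms{H},\mathbb{G}_m)$ automatic.
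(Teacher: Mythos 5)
Your Leray spectral sequence strategy is a genuinely different route from the paper's, and in the abelian case it is a viable one: the paper never invokes a spectral sequence, instead proving exactness at $\Pic(\ms{Y})$ and $\Pic(\ms{X})$ via a pushforward--pullback equivalence (its Proposition 6.3, proved by reducing to a local quotient presentation $[\Spec A/G]$ and using linear reductivity of the tame band), and proving exactness at $\Hom(\ms{H},\mathbb{G}_m)$ by showing that a line bundle with character $\epsilon$ is the same datum as an $\epsilon$-morphism of gerbes $\ms{X}\to B_{\ms{Y}}(\mathbb{G}_m)$ (its Lemma 6.4); by the very definition of Giraud's correspondence, such a morphism exists if and only if $\epsilon_*[\ms{X}]$ is the trivial class. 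That direct argument completely sidesteps the transgression identification that you correctly flag as the crux of your approach. However, your proposal has two genuine gaps.

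First, and most seriously, the theorem is stated for a possibly nonabelian locally constant band $\ms{H}$, and your identification of $d_2$ breaks down there: there is no Leray spectral sequence with coefficients in a nonabelian $\ms{H}$, since $H^2(\ms{Y},\ms{H})$ is not sheaf cohomology but a set of gerbe classes in the sense of Giraud, and $\epsilon_*$ itself is only defined through Giraud's correspondence. Your naturality-in-coefficients argument, which compares the $\ms{H}$-coefficient and $\mathbb{G}_m$-coefficient spectral sequences and transports $d_2(\mathrm{id}_{\ms{H}})=[\ms{X}]$ to $d_2(\epsilon)=\epsilon_*[\ms{X}]$, therefore only makes sense when $\ms{H}$ is a sheaf of abelian groups. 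The paper devotes real work to this reduction: it rigidifies $\ms{X}$ along the derived subgroup to form $\ms{X}\thickslash\ms{H}'^{\mathrm{der}}$, shows $\Pic(\ms{X})\cong\Pic(\ms{X}\thickslash\ms{H}'^{\mathrm{der}})$ (because any character into the abelian $\mathbb{G}_m$ kills commutators, so Proposition 6.3 applies), and shows that the abelianized gerbe represents $\phi_*[\ms{X}]$ (its Lemma 6.2, which in turn rests on the rigidification results of Section 5). You would need this reduction, or an equivalent, before your spectral sequence argument can even start. Second, even in the abelian case, the base identification $d_2(\mathrm{id}_{\ms{H}})=[\ms{X}]$ (up to sign, which is harmless for exactness) is asserted by appeal to ``Giraud's theory'' rather than proved; since the entire content of exactness at $\Hom(\ms{H},\mathbb{G}_m)$ is concentrated in exactly this statement, it cannot be left as a black box --- it needs either a proof (e.g.\ a \v{C}ech cocycle computation trivializing the gerbe locally) or a precise citation, which your proposal does not supply.
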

\begin{rmk}
    A band is locally a sheaf of groups, with gluing data only well-defined up to conjugation. By a tame locally constant finitely presented band, we mean that locally, the band comes from a tame finite constant sheaf of groups. When $\ms H$ can not be identified with a sheaf of groups, $\epsilon_*$ is more complicated to define. We describe the stack of bands and the construction of $\epsilon_*$ in Section 4. 
\end{rmk}
\begin{rmk}
    Theorem 1.2 holds more generally for $\ms Y$ a locally finitely presented algebraic stack over a locally noetherian base, as its proof will show. 
\end{rmk}

Given an arbitrary tame stacky curve $\ms{X}$, we can combine case one and case two to give a general description of $\Pic (\ms{X})$. The key point needed is to realize $\ms{X}$ as a gerbe over some stacky curve with trivial generic stabilizer $\ms{Y}$ using rigidification, in which a subgroup $\ms{H}$ of the inertia $\ms{I}(\ms{X})$ is removed from $\ms{X}$. Rigidification allows $\ms{X}$ to become a gerbe banded by $\underline{\ms{H}}$, the band associated to $\ms{H}$, over its rigidification with respect to $\ms{H}$, $\ms{X}\thickslash \ms{H}$. (See Section 4, and \cite[Appendix A]{Tame}.) We have the following proposition.

\begin{prop}
    Let $\ms{X}$ be a stacky curve with inertia group $\ms{I}$. Then there exists a finite flat subgroup stack $\ms{H}\leq \ms{I}$ such that the rigidification of $\ms{X}$ with respect to $\ms{H}$ is a stacky curve with trivial generic stabilizer $\ms{Y}:= \ms{X}\thickslash \ms{H}$, and the natural map $p:\ms{X}\to \ms{Y}$ is a gerbe banded by $\underline{\ms{H}}$. Moreover, if $\ms X$ is tame, then so is $\ms Y$. 
\end{prop}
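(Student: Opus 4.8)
The plan is to take for $\ms{H}$ the \emph{ineffective inertia} of $\ms{X}$---the part of $\ms{I}$ acting trivially on an étale chart---and then to check that rigidifying by it has all the asserted consequences, most of which are formal once $\ms{H}$ is understood. First I would locate the generic stabilizer. Since $\ms{X}$ is a geometrically connected Deligne--Mumford stack of dimension $1$, over a dense open $U\subseteq X$ the coarse space map $\ms{X}\to X$ is a gerbe, banded over the generic point $\eta$ by a finite étale group scheme $G$ over the function field $K=\kappa(\eta)$; this $G$ is the generic stabilizer, and it is exactly what I want to spread out over all of $\ms{X}$.

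Second, I would build $\ms{H}$ and read off its group-theoretic properties from the local structure of $\ms{X}$. Étale-locally one has $\ms{X}\cong[V/\Gamma]$ with $V$ a smooth curve and $\Gamma$ a finite étale group scheme acting on $V$, and there I set $\ms{H}=\ker(\Gamma\to\Aut(V))$, the subgroup acting trivially on $V$. On such a chart three facts are immediate: $\ms{H}$ is finite étale over $V$, being $\ker(\Gamma\to\Aut V)\times V$ (it is constant in the $V$-direction because it acts trivially); it is \emph{normal} in $\Gamma$, hence in the stabilizer scheme $\ms{I}|_V$, because a kernel is always normal; and, as the locus of automorphisms fixing the generic point, $\ms{H}|_V$ is precisely the schematic closure of $G$ inside $\ms{I}|_V$. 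This last description is chart-independent and compatible with étale (flat) base change, so the local pieces glue to a finite étale normal subgroup stack $\ms{H}\trianglelefteq\ms{I}$ with generic fibre all of $G$. Equivalently, one may define $\ms{H}$ at once as the schematic closure of $G$ in $\ms{I}$; flatness over the regular $1$-dimensional $\ms{X}$ is then just torsion-freeness of the closure, and the computation above identifies it locally.

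Third, with $\ms{H}\trianglelefteq\ms{I}$ a finite flat normal subgroup stack, I would invoke the rigidification package \cite[Appendix A]{Tame}: $\ms{Y}:=\ms{X}\thickslash\ms{H}$ exists as an algebraic stack and $p:\ms{X}\to\ms{Y}$ is a gerbe banded by $\underline{\ms{H}}$. It remains to see $\ms{Y}$ is a stacky curve with trivial generic stabilizer. Trivial generic stabilizer is automatic, since the generic stabilizer of $\ms{Y}$ is $G/\ms{H}_\eta=G/G=1$. As $\ms{H}$ is finite, $p$ has relative dimension $0$, so $\dim\ms{Y}=1$; the inertia $\ms{I}_{\ms{Y}}=\ms{I}_{\ms{X}}/\ms{H}$ is again finite and unramified, so $\ms{Y}$ is Deligne--Mumford; and because $\ms{H}$ is finite étale the morphism $p$ is smooth, proper, and surjective with geometrically connected fibres, so smoothness, properness and geometric connectedness descend from $\ms{X}$ to $\ms{Y}$. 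For the final clause, if $\ms{X}$ is tame then its stabilizers are linearly reductive, hence so is the subgroup $\ms{H}$; the stabilizers of $\ms{Y}$ are the quotients $\Gamma_x/\ms{H}_x$, and linear reductivity of finite group schemes passes to quotients, so $\ms{Y}$ is tame.

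The step I expect to be the main obstacle is the second one: assembling the local kernels $\ker(\Gamma\to\Aut V)$ into a single finite flat \emph{normal} subgroup stack of $\ms{I}$ that restricts to the full generic stabilizer. Normality is cheap once $\ms{H}$ is exhibited as a kernel, but flatness and gluing take care: one must know the schematic closure of $G$ in $\ms{I}$ is flat over the $1$-dimensional base (where torsion-free equals flat) and that this closure is computed chart-locally by the trivially-acting subgroup, so the two descriptions agree on overlaps. Everything past the construction of $\ms{H}$ is then a formal consequence of the rigidification construction and descent of the listed properties along the gerbe $p$.
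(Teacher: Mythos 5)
Your proposal is correct and follows essentially the same route as the paper: the paper also takes $\ms{H}$ to be the closure in $\ms{I}$ of the inertia over a dense open where it is finite flat (the same object as your schematic closure of the generic stabilizer, identified \'etale-locally as the kernel of $\Gamma\to\Aut(V)$ --- an identification the paper itself makes when proving $\ms{H}$ is locally constant for Corollary 1.7), proves flatness by the same torsion-killing computation over the discrete valuation rings $\ms{O}_{\ms{X},x}$, and then invokes the rigidification theorem of \cite[A.1]{Tame} and descends the stacky-curve properties along the \'etale gerbe $p$. The only cosmetic differences are that the paper verifies trivial generic stabilizer of $\ms{Y}$ by a contradiction argument on inertia rather than your direct quotient computation $G/\ms{H}_\eta=1$, and that normality of $\ms{H}$ is automatic for subgroup stacks of the inertia in this framework, so your kernel argument for it, while valid, is not needed.
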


\begin{rmk}
    The proof of this proposition mainly requires finding the finite flat subgroup stack $\ms{H}$. The result that given a finite flat subgroup $\ms{H}$ of $\ms{I}$, there exists a locally finitely presented algebraic stack $\ms{X}\thickslash \ms{H}$ and map $\ms{X}\to \ms{X}\thickslash \ms{H}$ such that $\ms{X}$ is a gerbe banded by $\underline{\ms{H}}$ over $\ms{X}\thickslash \ms{H}$ appears in \cite{Tame}. 
\end{rmk}

Putting all of the pieces together, we have the following corollary.

\begin{cor}
    Let $\ms{X}$ be a tame stacky curve with coarse moduli space $\pi:\ms{X}\to X$ and inertia stack $\ms{I}$. Let $\ms{H}\leq \ms{I}$ be as in Proposition 1.5, so that $\ms{Y}:=\ms{X}\thickslash\ms{H}$ is a tame stacky curve with trivial generic stabilizer and $p:\ms{X}\to\ms{Y}$ is a gerbe banded by $\underline{\ms{H}}$. Let $[\ms{X}]\in H^2(\ms{Y},\underline{\ms{H}})$ be the gerbe class of $\ms{X}$. We have an exact sequence
    $$\xymatrix{ 0\ar[r] & \Pic(\ms{Y}) \ar[r]^-{p^*}&  \Pic (\ms{X}) \ar[r]^-{\chi} & \mathrm{Hom}(\underline{\ms{H}}, \mathbb{G}_m) \ar[r]^-{(-)_*[\ms{X}]} & H^2(\ms{Y},\mathbb{G}_m)},$$
    where $\chi$ and and $(-)_*[\ms{X}]$ are as in Theorem 1.2, and $\Pic (\ms{Y})$ is described as in Theorem 1.1.
\end{cor}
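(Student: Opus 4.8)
The plan is to obtain the corollary by assembling the three preceding results: Proposition 1.5 supplies the geometric setup, Theorem 1.2 produces the four-term exact sequence, and Theorem 1.1 identifies the leftmost term $\Pic(\ms{Y})$. First I would invoke Proposition 1.5 to fix a finite flat subgroup stack $\ms{H}\leq\ms{I}$ whose rigidification $\ms{Y}:=\ms{X}\thickslash\ms{H}$ is a stacky curve with trivial generic stabilizer, with $p:\ms{X}\to\ms{Y}$ a gerbe banded by $\underline{\ms{H}}$; the proposition also guarantees that $\ms{Y}$ is tame because $\ms{X}$ is. Since rigidification does not change the coarse moduli space, the coarse space of $\ms{Y}$ is again $X$, which reconciles the hypotheses of Theorem 1.1 for $\ms{Y}$ with the notation of the corollary, and the gerbe $p$ is exactly the input required by Theorem 1.2.

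The one genuine point to verify --- and essentially the only obstacle, since the rest is direct citation --- is that $\underline{\ms{H}}$ satisfies the hypotheses of Theorem 1.2, namely that it is a tame, locally constant, finitely presented band on $\ms{Y}$ in the sense of Remark 1.3. Finite presentation is immediate from $\ms{H}$ being a finite flat (hence finitely presented) subgroup stack. Tameness follows from tameness of $\ms{X}$: as $\ms{H}\leq\ms{I}$, each fiber of $\ms{H}$ is a closed subgroup scheme of a linearly reductive finite group scheme and so is itself linearly reductive, whence $\underline{\ms{H}}$ arises \'etale-locally from a tame finite constant group. For local constancy I would use that $\ms{Y}$ is smooth (in particular normal) of dimension one with trivial generic stabilizer, so that the finite flat group scheme $\ms{H}\to\ms{Y}$ has constant cyclic generic fiber and, by spreading out over the smooth base together with the \'etale-local description of the stacky points, is \'etale-locally a constant sheaf of groups; this is precisely the notion of locally constant band developed in Section 4.

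With the hypotheses in place I would apply Theorem 1.2 verbatim to the gerbe $p:\ms{X}\to\ms{Y}$ banded by $\underline{\ms{H}}$ with class $[\ms{X}]\in H^2(\ms{Y},\underline{\ms{H}})$, which yields the exact sequence
$$\xymatrix{ 0\ar[r] & \Pic(\ms{Y}) \ar[r]^-{p^*}&  \Pic (\ms{X}) \ar[r]^-{\chi} & \mathrm{Hom}(\underline{\ms{H}}, \mathbb{G}_m) \ar[r]^-{(-)_*[\ms{X}]} & H^2(\ms{Y},\mathbb{G}_m)},$$
with $\chi$ and $(-)_*[\ms{X}]$ the maps constructed there; no new argument is needed, as the corollary's maps are defined to be exactly these. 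Finally, since $\ms{Y}$ is a tame stacky curve with trivial generic stabilizer and coarse space $X$, I would apply Theorem 1.1 to describe the kernel term $\Pic(\ms{Y})$ as the extension of $\prod_{i} \Z/n_i\Z$ by $\Pic(X)$, equivalently as the pushout displayed in that theorem. This completes the assembly, the only nontrivial ingredient being the verification that $\underline{\ms{H}}$ is a tame locally constant finitely presented band, after which the corollary is formal.
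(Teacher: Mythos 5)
Your overall architecture is exactly the paper's: Proposition 1.5 supplies the setup and the tameness of $\ms{Y}$, the only genuine work is verifying that $\underline{\ms{H}}$ is a tame locally constant finitely presented band on $\ms{Y}$, and then Theorems 1.2 and 1.1 are cited verbatim. The problem is that the step you yourself single out as the crux --- local constancy --- is the one place where your argument breaks. First, the claim that $\ms{H}$ has ``constant cyclic generic fiber'' is wrong: cyclicity of stabilizers (Proposition 2.18) is a feature of the stacky points of the rigidification $\ms{Y}$, which has trivial generic stabilizer; the generic fiber of $\ms{H}$ is the generic stabilizer of $\ms{X}$, which can be an arbitrary tame finite group (e.g.\ $\ms{X}=C\times B(\Z/2\times\Z/2)$), and it need not even be a constant group scheme over the generic point before passing to an \'etale cover (e.g.\ $\ms{X}=C\times B\mu_3$ over $\Q$). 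Second, ``spreading out'' from the generic fiber cannot by itself give local constancy at the closed points: a finite flat group scheme over a smooth one-dimensional base that is \'etale (even constant) generically need not be \'etale, let alone locally constant, at the special points --- $\mu_p$ over $\Z_p$ is the standard counterexample. (You also conflate $\ms{H}$, which is a subgroup stack of $\ms{I}(\ms{X})$ living over $\ms{X}$, with a group scheme over $\ms{Y}$; only the band $\underline{\ms{H}}$ lives on $\ms{Y}$, and its local constancy must be deduced from that of $\ms{H}$ via local sections of the gerbe $p$.)

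There are two honest ways to fill this gap. The paper's way is an explicit local computation: by Remark 2.10, \'etale locally $\ms{X}\cong[\Spec A/G]$ with $A$ local and $G$ a finite constant group; by the inertia formula $\ms{I}([\Spec A/G])=\coprod_{g\in G}[(\Spec A)^g/G]$ and separatedness of $\ms{X}$ (so each fixed locus $(\Spec A)^g$ is closed, hence empty or all of $\Spec A$), the closure of the generic inertia is $[\Spec A/G]\times H$ with $H=\ker(G\to \Aut(A))$, which is visibly a constant group. Alternatively, and closer in spirit to your sketch, one can argue globally: $\ms{H}$ is finite flat by Proposition 1.5 and unramified because it is a subgroup of the inertia of a Deligne--Mumford stack, hence finite \'etale; any finite \'etale group scheme is \'etale-locally constant, and local constancy of the band $\underline{\ms{H}}$ on $\ms{Y}$ follows by lifting local sections of $p$. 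Either repair is short, but the appeal to the generic fiber plus smoothness of the base does not substitute for it; the remaining ingredients of your proposal (finite presentation, tameness of the band, and the identification of the coarse space of $\ms{Y}$ with $X$) are fine and agree with the paper.
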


\subsection{Organization} The paper is organized as follows. In Section 2, we will begin with some conventions and relevant results on stacks. We then introduce stacky curves, and explain some relevant results about stacky curves. In particular, we will see that a tame stacky curve with trivial generic stabilizer has an explicit quotient structure locally, allowing us to realize any stacky curve with trivial generic stabilizer as a root stack.  

In Section 3, we prove Theorem 1.1, using the root stack structure and certain line bundles on root stacks. 

In Sections 4 and 5, we give some background on gerbes, the stack of bands relevant for nonabelian gerbes, and rigidification. 

In Section 6, we prove Theorem 1.2. Rigidification allows us to first reduce to the case of an abelian gerbe. 

In Section 7, we prove Proposition 1.5 and Corollary 1.7. 

In Section 8, we consider several examples. We begin with the moduli stack of elliptic curves $\ms{M}_{1,1}$, as considered in \cite{PicM}. In general, if the gerbe class of $\ms{X}$ is trivial, then for any character $\epsilon:A\to\mathbb{G}_m$, $\epsilon_*[\ms{X}]$ will also be trivial, and $\chi$ is surjective in this case. We will see an example where the gerbe class of $\ms{X}$ is not trivial, but $\chi$ is still surjective, and we will also see an example where $\chi$ is not surjective. In particular, we will find two curves $\ms{X}$ and $\ms{X}'$ such that $\ms{Y}\cong \ms{Y}'$ and $\ms{H}\cong \ms{H}'$, but $\Pic\ms X\not\cong \Pic \ms X'$. The question of whether $\chi$ is surjective is related to existence of twisted line bundles on $\ms{X}$, which is in general a difficult question and depends on the gerbe class of $\ms{X}$.

\subsection{Acknowledgements}
The author thanks Martin Olsson for this project idea and many helpful discussions and edits of the paper. This work was partially supported by NSF RTG grant DMS-1646385.

\section{Conventions on Stacks}

In this section, we give some relevant definitions and results on stacks, stacky curves, and root stacks. Everything in this section is well-known, but included for the reader's convenience and to set-up notation.
\begin{defn}
    Let $S$ be a scheme. An \textit{algebraic stack $\ms{X}$ over $S$}, as in \cite[026O]{SP}, is a category fibered in groupoids $p:\ms{X}\to (\mathrm{Sch}/S)_{fppf}$ whose diagonal morphism is representable by algebraic spaces, and such that there exists a smooth surjective covering $X\to \ms X$ by a scheme $X$.
\end{defn}
An algebraic stack $\ms{X}/S$ has the property that for all $S$-schemes $T$ and all $x,y\in\ms{X}(T)$, the presheaf
$$\uIsom_T(x,y): (\mathrm{Sch}/S)_{fppf}|_T^{\mathrm{op}}\to \mathrm{Set},$$
$$(f:T'\to T)\mapsto \Isom_{\ms{X}(T')}(f^*x,f^*y)$$
is an algebraic space. In particular, $\uAut_T(x):=\uIsom_T(x,x)$ is an algebraic space as well.

In this paper, by a stack, we mean an algebraic stack. Unless otherwise stated, the stacks studied in this paper will have finite diagonal and will be of finite presentation over $S$, which implies that $\uIsom_T(x,y)$ and $\uAut_T(x)$ are finite $T$-schemes of finite presentation \cite[7.2.10]{Olssonbook}. 

\begin{defn}
    For any geometric point $x:\Spec k\to \ms{X}$ of $\ms{X}$, the sheaf of groups $G_x:=\uAut(x):=\uAut_k(x)$ is called the \textit{stabilizer group} of $x$, and if $G_x$ is nontrivial, then we call $x$ a \textit{stacky point}. In this case, $x$ factors through the inclusion $BG_x \hookrightarrow \ms{X}$ of $BG_x=[\Spec k/G_x]$ as a strictly full substack of $\ms{X}$, and we call $BG_x$ the \textit{residual gerbe} at $x$ \cite[06ML, 06UH]{SP}.
\end{defn}

\begin{defn}
    A \textit{Deligne-Mumford stack} is an algebraic stack such that there exists an \'etale surjection $U\to \ms X$ from a scheme $U$. 
\end{defn}

\begin{lem}
    Let $\ms X$ be a finite-type Deligne-Mumford stack over a scheme $S$. Then for all geometric points $x:\Spec k\to \ms X$, $G_x$ is a finite constant group scheme, and the diagonal has finite discrete fibers.
\end{lem}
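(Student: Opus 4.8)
The plan is to reduce the whole statement to a single structural fact: a Deligne--Mumford stack has \emph{unramified} diagonal, and an unramified group scheme over a separably closed field is constant. The starting observation is that the stabilizer $G_x = \uAut_k(x)$ is literally a fiber of the diagonal. Indeed, by definition the fiber of $\Delta\colon \ms{X}\to \ms{X}\times_S \ms{X}$ over a $T$-point $(a,b)$ is $\uIsom_T(a,b)$, so $G_x = \uIsom_k(x,x)$ is the base change of $\Delta$ along $(x,x)\colon \Spec k \to \ms{X}\times_S\ms{X}$. Consequently any property of $\Delta$ that is stable under base change (being representable, being unramified) is inherited by $G_x\to \Spec k$, and more generally by every fiber of $\Delta$.

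First I would verify that $\Delta$ is unramified directly from the étale atlas. Choose an étale surjection $U\to \ms{X}$ with $U$ a scheme and set $R = U\times_{\ms{X}} U$. Both projections $s,t\colon R\to U$ are base changes of $U\to\ms{X}$, hence étale, so $\Omega_{R/U}=0$ for each. Now consider the canonical map $R\to U\times_S U$; composing it with the second projection $U\times_S U\to U$ recovers $t$. The cotangent exact sequence for $R \to U\times_S U \to U$ then reads $\Omega_{R/U}\to \Omega_{R/(U\times_S U)}\to 0$ with the left term vanishing (as $t$ is étale), forcing $\Omega_{R/(U\times_S U)}=0$. Since composing $R\to U\times_S U$ with the first projection gives $s$, which is locally of finite type, the cancellation property shows $R\to U\times_S U$ is locally of finite type, hence unramified. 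But $R\to U\times_S U$ is exactly the base change of $\Delta$ along the smooth surjection $U\times_S U\to \ms{X}\times_S\ms{X}$, and unramifiedness descends along smooth covers of the target, so $\Delta$ is unramified. (Alternatively one may invoke the standard equivalence between being Deligne--Mumford and having unramified diagonal.)

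With this established, $G_x\to \Spec k$ is unramified. Because $k$ is a field, every $k$-module is flat, so unramified together with flat makes $G_x$ étale over $k$; and $k$ is separably closed (being a geometric point), so an étale $k$-scheme is a disjoint union of copies of $\Spec k$. The standing hypotheses (finite diagonal, finite presentation over $S$) ensure that $G_x$ is in fact a finite $k$-scheme, so this disjoint union is finite and $G_x$ is a finite constant group scheme. Running the identical base-change argument with a pair $(x,y)$ in place of $(x,x)$ shows that every geometric fiber $\uIsom_\Omega(x,y)$ of $\Delta$ is finite étale over $\Omega$, hence a finite discrete set of points; this is precisely the assertion that the diagonal has finite discrete fibers.

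The only genuinely delicate point is the unramifiedness of $\Delta$, namely the cotangent-sequence bookkeeping and the descent of unramifiedness along $U\times_S U\to \ms{X}\times_S\ms{X}$; everything afterward is formal, once one uses that flatness over a field is automatic and that the requisite finiteness is furnished by the standing hypotheses. One should also take minor care that $G_x$ is a scheme and not merely an algebraic space, but this likewise follows from the finite-diagonal assumption (or, intrinsically, from the fact that an algebraic space unramified over a field is a scheme).
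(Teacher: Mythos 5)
Your proof is correct, and it follows the same overall skeleton as the paper's: unramified diagonal $\Rightarrow$ fibers of $\Delta$ are unramified over a field $\Rightarrow$ \'etale $\Rightarrow$ constant/discrete. The differences are in how the two key sub-steps are justified. For unramifiedness of the diagonal, the paper simply cites the fact that a Deligne--Mumford stack has formally unramified diagonal and upgrades to unramified using finite type, whereas you reprove this from the \'etale atlas via the cotangent sequence for $R \to U\times_S U \to U$ and smooth descent of unramifiedness; your version is self-contained, the paper's is shorter. For the passage from unramified to \'etale, the paper invokes SGA1 I.3.1 (an unramified group scheme over an algebraically closed field has open-immersion diagonal, hence is reduced, hence \'etale), which uses the group structure; you instead observe that flatness is automatic over a field, so unramified already equals \'etale. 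This is both more elementary and more uniform: since your argument never uses the group structure, it applies verbatim to the $\uIsom$ fibers of $\Delta$, whereas the paper must first reduce those fibers to stabilizers by noting that $\uIsom_k(x_1,x_2)$ is empty or isomorphic to $G_{x_1}$. One caveat on your finiteness step: you derive finiteness of $G_x$ from the standing hypotheses (finite diagonal, finite presentation). Be careful that this lemma is later applied in the proof that a stacky curve has \emph{finite} diagonal (Lemma 2.14), where finite diagonal is not yet available; leaning on that standing hypothesis risks circularity in exactly that application. Fortunately, what Lemma 2.14 actually extracts from this lemma is only discreteness of the fibers (quasi-compactness there comes from properness of the diagonal), so your proof still serves --- but it would be cleaner to note that over a field a quasi-compact \'etale scheme is finite, so finiteness needs only quasi-compactness of $\Delta$ rather than the full finite-diagonal hypothesis. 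The paper's own ``hence finite'' glosses over the same point, so this is a shared, minor gap rather than a defect specific to your argument.
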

\begin{proof}
    A Deligne-Mumford stack has formally unramified diagonal \cite[8.3.3]{Olssonbook}. Since $\ms X$ is finite-type, the diagonal is unramified. Thus, all $G_x$ are unramified group schemes over algebraically closed fields, so have diagonal morphism an open immersion \cite[I.3.1]{SGA1}. Then all $G_x$ are reduced, and therefore \'etale, hence finite and discrete. The fibers of the diagonal are $\uIsom_k(x_1,x_2)$ for a geometric point $(x_1,x_2):\Spec k\to \ms X\times_S \ms X$, which is either empty or isomorphic to the finite discrete $\uAut(x_1)=G_{x_1}$.
\end{proof}

\begin{defn}
    If $G_x$ is a finite group scheme, then we say $x$ is a \textit{tame} point if $\cha k\nmid \#G_x(k)$.
\end{defn}

\begin{prop}
    For an algebraic stack $\ms X$ with finite diagonal, there exists an open substack $\ms U \subseteq \ms X$ characterized by the property that for any $T$-point $t:T\to \ms X$, $t$ factors through $\ms U$ if and only if $\uAut_T(t)\cong \{\mathrm{id}_T\}$. 
\end{prop}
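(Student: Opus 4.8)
The plan is to realize $\ms U$ as the locus where a canonical coherent ideal sheaf built from the inertia vanishes, rather than as a naive complement of a ``nontrivial automorphism'' locus. The naive approach fails in the presence of non-reduced stabilizers: over $\ms X=B\mu_p$ in characteristic $p$ the complement of the identity section in the inertia is set-theoretically empty, yet every point has nontrivial stabilizer and the correct answer is $\ms U=\emptyset$. So the argument must be scheme-theoretic throughout.

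First I would form the inertia stack $\ms I:=\ms X\times_{\Delta,\,\ms X\times_S\ms X,\,\Delta}\ms X$ with its first projection $f:\ms I\to\ms X$. Since $f$ is the base change of the diagonal $\Delta:\ms X\to\ms X\times_S\ms X$ along itself, and $\Delta$ is finite by hypothesis, $f$ is representable and finite; in particular it is affine, so $\ms I=\uSpec_{\ms X}\mathcal A$ for a finite-type quasi-coherent sheaf of $\ms O_{\ms X}$-algebras $\mathcal A:=f_*\ms O_{\ms I}$. Because relative Spec commutes with arbitrary base change, the fiber of $f$ over any $t:T\to\ms X$ is exactly $\uAut_T(t)=\uSpec_T t^*\mathcal A$.

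Next I would exploit the identity section $e:\ms X\to\ms I$, which provides an augmentation $\mathcal A\to\ms O_{\ms X}$; let $\ms J:=\ker(\mathcal A\to\ms O_{\ms X})$, a finite-type (hence coherent, in our finitely presented setting) quasi-coherent ideal. I then define $\ms U:=\ms X\setminus\mathrm{Supp}(\ms J)$, which is open since the support of a finite-type sheaf is closed. The key computation is that the augmentation $\mathcal A\to\ms O_{\ms X}$ is split as an $\ms O_{\ms X}$-module map by the algebra unit $\ms O_{\ms X}\to\mathcal A$, so $\mathcal A\cong\ms O_{\ms X}\oplus\ms J$ and therefore $t^*\mathcal A\cong\ms O_T\oplus t^*\ms J$, with $t^*\ms J$ precisely the augmentation ideal of $\uAut_T(t)=\uSpec_T t^*\mathcal A$. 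Hence $\uAut_T(t)\cong\{\mathrm{id}_T\}$ if and only if $t^*\ms J=0$. Finally, since $\mathrm{Supp}(t^*\ms J)=t^{-1}(\mathrm{Supp}\,\ms J)$ for finite-type sheaves, $t^*\ms J=0$ holds precisely when $t(|T|)$ avoids $\mathrm{Supp}(\ms J)$, i.e.\ when $t$ factors through $\ms U$. This yields both the openness of $\ms U$ and the claimed characterization simultaneously.

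The main obstacle is conceptual rather than computational: one must carry the scheme structure of $\ms J$ through the whole argument (via the splitting of the augmentation and Nakayama, encoded in $\mathrm{Supp}(t^*\ms J)=t^{-1}(\mathrm{Supp}\,\ms J)$) rather than reason topologically, precisely so that the $B\mu_p$ phenomenon is handled correctly. The remaining points are routine: the sheaf $\mathcal A$, the ideal $\ms J$, and its support are all defined intrinsically on $\ms X$ (equivalently, pulled back from and descended along a smooth atlas, since each construction commutes with the flat covering map), so there is no difficulty in passing between the stack and its presentation.
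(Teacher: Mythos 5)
Your proof is correct, and it takes a genuinely different (if related) route from the paper's. Both arguments start from the same skeleton: finiteness of the diagonal makes the inertia $\ms I \to \ms X$ finite, hence affine, so $\ms I = \uSpec(\mathcal{A})$ for a finite-type quasi-coherent sheaf of algebras, and $\ms U$ is the locus where $\mathcal{A}$ is trivial. The difference is in execution. The paper pulls everything back to a smooth atlas $X \to \ms X$, defines $U \subseteq X$ as the locus where $\dim_{\kappa(x)}(\mathcal{A}_x \otimes \kappa(x)) = 1$, gets openness from upper semicontinuity of fiber dimension, proves the characterization by a fiberwise computation, and then must descend $U$ to an open substack of $\ms X$ — which requires verifying $\pr_1^*U = \pr_2^*U$ on $X \times_{\ms X} X$ and the cocycle condition. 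You instead work intrinsically on $\ms X$: the identity section splits $\mathcal{A} \cong \ms O_{\ms X} \oplus \ms J$, and $\ms U$ is the complement of $\mathrm{Supp}(\ms J)$; openness becomes closedness of the support of a finite-type sheaf, and the characterization becomes the identity $\mathrm{Supp}(t^*\ms J) = t^{-1}(\mathrm{Supp}\,\ms J)$, valid because $\ms J$ is finite type (note the splitting is what guarantees this: $\ms J$ is a direct summand, hence a quotient, of the finite module $\mathcal{A}$). Your formulation buys two things: it eliminates the descent step entirely, and it makes the "if and only if" characterization and the openness fall out of a single statement about supports; the two semicontinuity mechanisms are of course the same fact in disguise, since the fiber dimension of $\mathcal{A}$ equals $1$ exactly where the fiber of $\ms J$ vanishes. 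The paper's version is more elementary (fiber dimensions on an honest scheme, citing Hartshorne) at the cost of the descent verification. Your observation that the argument must be scheme-theoretic to handle $B\mu_p$ in characteristic $p$ is well taken, though the paper's fiber-dimension criterion handles that case correctly too, since $\dim_k k[x]/(x^p - 1) = p > 1$ even though the inertia is topologically just the identity section.
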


\begin{proof}
    Let $\pi: X\to \ms X$ be a smooth cover from a scheme, and let $I\to X$ denote the fiber product $\ms I (\ms X) \times_{\ms X} X$. Since $\ms X$ has finite diagonal, $\ms I (\ms X)\to \ms X$ is finite, and thus $I$ is finite over $X$, so $I=\uSpec_X(\ms A)$ for some coherent sheaf of $\ms O_X$-algebras \cite[10.2.4]{Olssonbook}. Define 
    $$U=\{x\in X: \mathrm{dim}_{\kappa(x)}(\ms A_x\otimes_{\ms O_X} \kappa(x))=1\}.$$
    Then $U$ is an open subscheme in $X$ by semi-continuity \cite[II.Exercise 5.8]{Hartshorne}. 
   
    For any $T$-point $t:T\to \ms X$, let $t':T'\to X$ be the base change to $X$. We have the following commutative diagram with the outer and right squares cartesian,
    $$\xymatrix{\uAut_{T'}(\pi\circ t') \ar[r] \ar[d] & I \ar[r] \ar[d] & \ms I \ar[d] \\ T' \ar[r]^-{t'} & X \ar[r]^-{\pi} & \ms X},$$
    so the left square is cartesian as well. 
    Then $t'$ factors through $U$ if and only if for every point $x$ in the image of $t'$, $\mathrm{dim}_{\kappa(x)}(\ms A_x\otimes_{\ms O_X} \kappa(x))=1$, which holds if and only if $\uSpec_{\kappa(x)}(\ms A_x\otimes_{\ms O_X} \kappa(x))\cong \Spec \kappa(x)$, that is, $I_{\kappa(x)}\cong X_{\kappa(x)}$ for every $x$ in the image of $t'$, if and only if $\uAut_{T'}(\pi\circ t')\cong \{\mathrm{id}_{t'}\}$.
     
    By effectivity of descent for open subschemes, we have an equivalence of categories between open sets in $\ms X$ and pairs $(U,\sigma)$ of an open set $U\subseteq X$ and an isomorphism $\sigma:\pr_1^*U\cong \pr_2^*U$, where $pr_i$ are the projections $X\times_{\ms X} X\to X$, such that $\sigma$ satisfies the cocycle condition in $X\times_{\ms X} X\times_{\ms X} X$. For $U$ defined above, we have that $\pr_1^*U=\pr_2^*U$. Indeed, $\pr_i^*U$ is given by 
    $$\pr_i^*U=\{(x_1,x_2,\sigma): x_1,x_2\in X, x_i\in U, \sigma:\pi(x_1)\cong \pi(x_2)\},$$
    and if $x\in U$ such that $\pi(x)\cong \pi(y)$, then we have an isomorphism $\uAut(\pi(x))\cong \uAut(\pi(y))$ (see Remark 4.2), so $y\in U$ as well. The equality $\pr_1^*U=\pr_2^*U$ clearly satisfies the cocycle condition, so descent gives us an open substack $\ms U\subseteq \ms X$ such that for any $T$-point $t:T\to \ms X$, $t$ factors through $\ms U$ if and only if $\uAut_T(t)\cong \{\mathrm{id}_t\}$.
\end{proof}

\begin{defn}
    We say that $\ms{X}$ has \textit{trivial generic stabilizer} if the open substack $\ms U$ is dense in $\ms X$.
\end{defn}

\begin{defprop}
    For any algebraic stack $\ms{X}$ locally of finite presentation with finite diagonal over a locally noetherian base $S$, there exists a \textit{coarse moduli space} $X$ and a morphism $\pi:\ms{X}\to X$, \cite[1.1]{KM}, \cite[11.1.2]{Olssonbook}, where $X$ is an algebraic space over $S$ and $\pi$ satisfies
    \begin{enumerate}
        \item For any morphism $g:\ms{X}\to Z$ from $\ms{X}$ to an algebraic space $Z$, there exists a unique morphism $f:X\to Z$ such that $g=f\circ \pi$,
        \item For any algebraically closed field $k$, the map $|\ms{X}(k)|\to X(k)$ is bijective, where $|\ms X(k)|$ denotes the set of isomorphism classes in $\ms X(k)$. 
    \end{enumerate} 
    Additionally, $\pi$ is proper, the map $\ms{O}_X\to \pi_*\ms{O}_{\ms{X}}$ is an isomorphism, and for any flat morphism of algebraic spaces $X'\to X$, $\pi':\ms{X}\times_X X'\to X'$ is a coarse moduli space for $\ms{X}'$. 
\end{defprop}

For Deligne-Mumford stacks with a coarse moduli space, we have a very nice local structure theorem.

\begin{thm}\cite[11.3.1]{Olssonbook} 
    Let $\ms{X}$ be a Deligne-Mumford stack over a locally noetherian scheme $S$, locally of finite type and with finite diagonal. Let $\pi:\ms{X}\to X$ be its coarse moduli space. Let $x:\Spec k \to \ms{X}$ be a geometric point with stabilizer group $G_x$. Then there exists an \'etale neighborhood $U\to X$ of the image of $x$ and a finite $U$-scheme $V\to U$ with an action of $G_x$ such that $\ms{X}\times_X U\cong [V/G_x]$. 
\end{thm}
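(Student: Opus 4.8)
The plan is to reduce to the case where $X=\Spec R$ for $R$ a strictly henselian local ring with closed point $\bar x=\pi(x)$ and residue field $k$, to construct the finite scheme $V$ together with its $G_x$-action directly over this base, and finally to spread the construction out to an honest finite-type \'etale neighborhood of $\bar x$.

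First I would replace $X$ by its strict henselization $X^{sh}=\Spec \ms O^{sh}_{X,\bar x}$ and set $\ms X^{sh}=\ms X\times_X X^{sh}$. Since $X^{sh}\to X$ is flat, $\ms X^{sh}\to X^{sh}$ is again a coarse moduli space by the base-change property recorded above, and its reduced closed fiber is the residual gerbe $BG_x$. By the Lemma ensuring that $G_x$ is finite and the diagonal unramified, the inertia $\ms I(\ms X)\to\ms X$ is finite and unramified, so over the strictly henselian base it becomes finite \'etale near the closed fiber with fiber $G_x$ over $x$. I then fix an \'etale atlas $W\to\ms X^{sh}$ and a point $w$ mapping to $x$.

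Next, and this is the heart of the argument, I would produce a $G_x$-action on a slice. Because $W\to\ms X^{sh}$ is \'etale and $\ms X^{sh}$ is Deligne-Mumford, the rigidity of \'etale morphisms together with the henselian property lets me lift each automorphism $g\in G_x=\uAut(x)$ of the base point uniquely to an $\ms X^{sh}$-automorphism $\tilde g$ of a suitable $G_x$-stable \'etale neighborhood $V$ of $w$, fixing $w$; uniqueness of these lifts guarantees that the $\tilde g$ assemble into an action of the constant group $G_x$ on $V$, with $w$ a fixed point and $V\to\ms X^{sh}$ still \'etale. I would then check that the induced morphism $[V/G_x]\to\ms X^{sh}$ is representable and \'etale and restricts to an isomorphism of reduced closed fibers, both equal to $BG_x$; by the henselian local structure this forces it to be an isomorphism after shrinking $V$. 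Finiteness of $V\to X^{sh}$ then follows because $V\to\ms X^{sh}$ is a $G_x$-torsor, hence finite \'etale, while $\ms X^{sh}\to X^{sh}$ is proper with finite fibers, so the composite is proper and quasi-finite, hence finite.

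Finally I would descend from the strictly henselian base to a finite-type \'etale neighborhood. Writing $X^{sh}=\varprojlim_i U_i$ as the cofiltered limit of \'etale neighborhoods of $\bar x$ and using that $\ms X$ is locally of finite presentation, the finite scheme $V$, its $G_x$-action, and the isomorphism $[V/G_x]\cong\ms X\times_X X^{sh}$ are all finitely presented, hence defined already over some $U=U_i$, where the isomorphism persists after possibly shrinking $U$. The main obstacle is the middle step: the stabilizer equals all of $G_x$ only at the point $x$, whereas nearby points may have strictly smaller automorphism groups, so extending the automorphisms of $x$ to a genuine action of $G_x$ on the whole slice $V$---rather than on the residual gerbe alone---is precisely what forces the strictly henselian hypothesis and constitutes the technical core of the theorem.
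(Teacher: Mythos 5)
The paper itself contains no proof of this statement: it is quoted as background, with the proof deferred entirely to \cite[11.3.1]{Olssonbook}. So your sketch can only be measured against that standard argument, and in outline you do follow it: strict henselization of $X$ at $\pi(x)$ (legitimate, by the flat base change property of coarse spaces recorded in the paper), construction of a slice $V$ with $G_x$-action over the henselian base, and a limit argument to descend to a finite-type \'etale neighborhood (this last step is routine and fine). However, one of your assertions is false as stated: finiteness and unramifiedness of the inertia do \emph{not} make it finite \'etale near the closed fiber, because flatness fails exactly where stabilizer orders jump. For $\ms{X}=[\mathbb{A}^1/\mu_n]$ at the origin --- the local model of the very stacky curves this paper studies --- the inertia has fiber $\mu_n$ over the stacky point and trivial fiber at all nearby points, so it is not flat, hence not \'etale, on any neighborhood of the closed fiber, strictly henselian base or not. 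The claim is not load-bearing for your later steps, but it should be deleted rather than repaired.

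The genuine gap is the step you yourself flag as the technical core: producing a $G_x$-stable \'etale neighborhood $V$ of $w$ to which every $g\in G_x$ lifts compatibly. Rigidity of \'etale morphisms yields unique lifts only over a henselian local scheme; the atlas $W$ is not local, so a priori each $g$ lifts only on some shrinking $V_g$ of $W$ around $w$, and nothing in your argument produces a single neighborhood stable under all lifts and compatible with composition. The missing ingredient, and the way the cited proof actually proceeds, is Zariski's Main Theorem for quasi-finite separated morphisms over a henselian local base: taking $W$ affine, the composite $W\to \ms{X}^{sh}\to X^{sh}$ (where $\ms{X}^{sh}:=\ms{X}\times_X X^{sh}$) is separated and quasi-finite, hence decomposes as $W=W^{\mathrm{fin}}\sqcup W'$ with $W^{\mathrm{fin}}$ finite over $X^{sh}$ and $W'$ having empty closed fiber. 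The connected component $V$ of $W^{\mathrm{fin}}$ through $w$ is then finite over the strictly henselian $X^{sh}$, hence henselian local with separably closed residue field. Now $V\times_{\ms{X}^{sh}}V$ is also finite over $X^{sh}$ (finiteness of the diagonal enters here), so it is a disjoint union of henselian local schemes; its points over $(w,w)$ are exactly the elements of $G_x$, and the component through each such point maps isomorphically to $V$ under either projection, since a finite \'etale map of connected henselian local schemes inducing a trivial residue extension at closed points is an isomorphism. This single mechanism simultaneously produces the $G_x$-action on $V$, the identification $V\times_{\ms{X}^{sh}}V\cong G_x\times V$ of groupoids (hence $[V/G_x]\cong \ms{X}^{sh}$, with surjectivity of $V\to\ms{X}^{sh}$ coming from the fact that the only open substack containing the closed fiber of the local stack $\ms{X}^{sh}$ is the whole stack), and the finiteness of $V$ over $X^{sh}$ --- which you instead derive a posteriori from properness of the coarse map; that derivation is not wrong, but it is only available after the slice exists. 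With this ZMT step supplied, your outline becomes the standard proof.
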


\begin{rmk}
    This theorem will be quite useful to us, as any property of such a Deligne-Mumford stack that can be checked locally can be checked on a stack quotient of a scheme by a finite group. Moreover, we can take the scheme to be affine. Indeed, for any affine open $\Spec A\subset U$, $\Spec A \times_X \ms{X}=[\Spec B/G_x]$, where $A\to B$ is a finite ring map. Also, $[\Spec B/G_x]\to\Spec A$ is a coarse moduli space, so $A=B^{G_x}$. We will often take $A=\widehat{\ms{O}}_{U,x}$, the complete local ring of $U$ at $x$, in which case $B=\widehat{\ms{O}}_{V,x}$. 
\end{rmk}

\subsection{Stacky Curves} 
\begin{defn} 
    A \textit{stacky curve $\ms{X}$ over $k$} is a smooth, proper, geometrically connected Deligne-Mumford stack of dimension 1 over a field. 
\end{defn}

\begin{lem}
    Let $\ms{X}$ be a stacky curve. Then the diagonal is finite, and $\ms X$ has a coarse moduli space $\pi:\ms X\to X$ with $X$ a scheme.
\end{lem}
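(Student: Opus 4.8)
The plan is to prove finiteness of the diagonal first, then invoke the Keel--Mori theorem to produce a coarse moduli space as an algebraic space, and finally upgrade that algebraic space to a scheme using the structure theory of one-dimensional algebraic spaces.

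First I would establish that the diagonal is finite. Since $\ms X$ is a stacky curve it is proper over $k$, hence in particular separated and of finite type; separatedness means the diagonal $\Delta:\ms X\to\ms X\times_k\ms X$ is proper. On the other hand, $\ms X$ is a finite-type Deligne--Mumford stack, so by Lemma 2.4 the diagonal is unramified with finite discrete fibers, and being of finite type it is therefore quasi-finite. A representable morphism that is simultaneously proper and quasi-finite is finite (Zariski's main theorem, \cite{SP}), so $\Delta$ is finite.

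Next, with the diagonal finite and $\ms X$ locally of finite presentation over the noetherian base $\Spec k$, the hypotheses of Definition/Proposition 2.9 (Keel--Mori, \cite{KM}, \cite{Olssonbook}) are satisfied. This yields a coarse moduli space $\pi:\ms X\to X$ with $X$ an algebraic space over $k$, with $\pi$ proper and $\ms O_X\to\pi_*\ms O_{\ms X}$ an isomorphism. Since $\pi$ is proper and $\ms X$ is proper over $k$, the space $X$ is proper over $k$; and since the geometric fibers of $\pi$ are the residual gerbes $BG_x$, which are zero-dimensional, dimension is preserved, so $\dim X=\dim\ms X=1$.

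The remaining, and main, step is to show that the algebraic space $X$ is in fact a scheme, since Keel--Mori only guarantees an algebraic space. Here $X$ is a separated (being proper) algebraic space of finite type and dimension $1$ over a field. I would first argue that $X$ is normal: as $\ms X$ is smooth over $k$ it is regular, and by the local structure Theorem 2.10 together with Remark 2.11, \'etale-locally on $X$ one has $X=\Spec B^{G}$ with $[\Spec B/G]\cong\ms X\times_X U$. Because $\Spec B\to\ms X\times_X U$ is an \'etale $G$-torsor over a regular stack, $B$ is regular, so the ring of invariants $B^{G}$ is a normal domain, and normality descends to $X$. Then the theorem that a normal (equivalently, separated) one-dimensional algebraic space of finite type over a field is a scheme \cite{SP} shows that $X$ is a scheme, in fact a smooth proper curve over $k$. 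I expect this final passage from algebraic space to scheme, relying on the low-dimensional structure theory, to be the crux of the argument.
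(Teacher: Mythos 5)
Your proposal follows essentially the same route as the paper's proof: finiteness of the diagonal from properness of $\ms X$ (diagonal proper) combined with Lemma 2.4 (diagonal unramified with finite discrete fibers, hence quasi-finite), then Keel--Mori for the coarse moduli space, then the fact that a separated, one-dimensional, finite-type algebraic space over a field is a scheme. The only difference is cosmetic: where the paper simply records that $X$ is smooth, separated \cite[1.3]{KM} and one-dimensional before citing \cite[V.4.9]{Knutsonbook}, you insert an explicit normality argument via the local structure theorem, which is harmless but not needed for the scheme-ness criterion, since separatedness and dimension one already suffice.
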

\begin{proof}
    Since $\ms{X}$ is proper, the diagonal is proper and thus of finite type, so we have that the diagonal is locally quasi-finite if and only if its fibers are discrete. \cite[06RW, 04XB]{SP} Lemma 2.4 shows that fibers are discrete, so the diagonal is locally quasi-finite. Being finite type implies being quasi-compact, so the diagonal is quasi-finite. Since the diagonal is also proper, it is finite \cite[02LS, 0418, 03HA]{SP}.

    Since $\ms X$ is locally of finite type with finite diagonal, $\ms{X}$ has a coarse moduli space $\pi:\ms{X}\to X$. The coarse space is smooth, separated \cite[1.3]{KM} and 1-dimensional over a field, so $X$ is a scheme \cite[V.4.9]{Knutsonbook}.
\end{proof}

\begin{rmk}
Since the diagonal morphism is finite, the diagonal is in fact representable by affine schemes, since for any affine scheme $\Spec A$ mapping into $\ms{X}\times_S \ms{X}$, the fiber product with the diagonal morphism will be an affine scheme $\Spec B$, with $B$ a finite $A$-algebra. 
\end{rmk}

\begin{prop}
    Let $\ms X$ be a stacky curve with trivial generic stabilizer. Then the dense open $\ms U\subseteq \ms X$ in Proposition 2.6 is a dense open subscheme in $\ms X$.
\end{prop}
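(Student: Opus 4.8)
The plan is to show that the open substack $\ms U$ of Proposition 2.6 is representable by a scheme, after which density comes for free. First I would record the easy structural facts: by Definition 2.7 the substack $\ms U$ is dense in $\ms X$, and since $\ms X$ is a finite-type, smooth, geometrically connected stacky curve with finite (hence unramified and separated) diagonal, the open substack $\ms U$ inherits all of these properties. Thus $\ms U$ is a nonempty, smooth, separated, one-dimensional Deligne-Mumford stack of finite type over $k$. Everything then reduces to proving that $\ms U$ has no stacky structure, i.e. that $\ms U$ is a scheme.

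The key step is to deduce from the defining property of $\ms U$ that it is an algebraic space. I would use the standard fact that an algebraic stack is an algebraic space exactly when its diagonal is a monomorphism, equivalently when for every $S$-scheme $T$ and all $x,y\in\ms U(T)$ the algebraic space $\uIsom_T(x,y)$ is a subsingleton, meaning $\uIsom_T(x,y)\to T$ is a monomorphism. To verify this subsingleton condition directly from Proposition 2.6, I would argue as follows. Let $x,y\in\ms U(T)$ and suppose $\phi,\psi$ are two sections of $\uIsom_T(x,y)$ over some $T'\to T$. Then $\psi^{-1}\circ\phi$ is a section of $\uAut_{T'}(x|_{T'})$; but $x|_{T'}:T'\to\ms U$ is a $T'$-point factoring through $\ms U$, so $\uAut_{T'}(x|_{T'})\cong\{\mathrm{id}_{T'}\}$ by the characterization in Proposition 2.6. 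Hence $\psi^{-1}\circ\phi=\mathrm{id}$, so $\phi=\psi$, and $\uIsom_T(x,y)$ has at most one section over any $T'$. Therefore the diagonal of $\ms U$ is a monomorphism and $\ms U$ is an algebraic space.

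Finally, $\ms U$ is now a smooth, separated, one-dimensional algebraic space of finite type over the field $k$, so it is a scheme by Knutson \cite[V.4.9]{Knutsonbook}, exactly as in the proof of Lemma 2.14 for the coarse space $X$. Combined with the density noted at the outset, this exhibits $\ms U$ as a dense open subscheme of $\ms X$. I expect the main obstacle to be the middle step: one must carefully pass from the hypothesis that \emph{every} $T$-point of $\ms U$ has trivial automorphism sheaf to the statement that the full diagonal is a monomorphism. The subtlety is that triviality of $\uAut$ must be invoked after base change along an arbitrary $T'\to T$ (so that the torsor-style cancellation $\psi^{-1}\circ\phi=\mathrm{id}$ is available), which is precisely why the formulation of Proposition 2.6 quantifying over all $T$-points, rather than only over geometric points, is exactly what is needed here.
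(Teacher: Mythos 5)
Your proof is correct and follows essentially the same route as the paper: the paper's proof simply observes that $\ms U$ is a smooth, separated, one-dimensional algebraic space over a field and invokes \cite[V.4.9]{Knutsonbook}, exactly as you do in your final step. The only difference is that you explicitly justify the implicit claim that $\ms U$ is an algebraic space (via the monomorphism-of-diagonal argument from the characterization in Proposition 2.6), a detail the paper takes for granted; this is a sound and welcome elaboration, not a different approach.
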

\begin{proof}
    The dense open $\ms U$ is a smooth, separated, 1-dimensional algebraic space over a field, so is a scheme \cite[V.4.9]{Knutsonbook}.
\end{proof}
\begin{defn}
    A stacky curve $\ms{X}$ is \textit{tame} if every geometric point is a tame point.
\end{defn}

\begin{prop} \cite[2.2 (i)]{Log} Let $\ms{X}$ be a tame stacky curve with trivial generic stabilizer and $\pi:\ms{X}\to X$ its coarse space, and let $x:\Spec k\to X$ be a geometric point. Then $$\Spec \ms O_{X,x}\times_X \ms X\cong [\Spec(\ms O_{X,x}[z]/(z^n-f))/\mu_n]$$ 
where $\ms O_{X,x}$ is the \'etale local ring of $x$, $f\in\ms O_{X,x}$ is a local coordinate at $x$, generating the maximal ideal of $\ms O_{X,x}$, and $\mu_n$ acts trivially on $\ms O_{X,x}$ and acts by multiplication by a primitive $n$th root of unity on $z$. In particular, the stabilizer group of $x$ is a finite cyclic group $\mu_n$ for some $n\in\Z$.
\end{prop}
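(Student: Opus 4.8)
The plan is to reduce to a purely local, ring-theoretic statement via the local structure theorem and then run a Kummer-theory argument. First, applying Theorem 2.9 and Remark 2.10 to the étale local ring $A := \ms O_{X,x}$, I would write
$$\Spec A \times_X \ms X \cong [\Spec B / G_x],$$
where $B$ is a finite $A$-algebra carrying an action of the stabilizer $G_x$ (a finite constant group by Lemma 2.4) with $A = B^{G_x}$. Since $\ms X$ is smooth of dimension $1$, so is $[\Spec B/G_x]$, and hence $B$ is a regular local ring of dimension $1$, i.e. a DVR. Because $\ms X$ has trivial generic stabilizer, Proposition 2.14 shows $G_x$ acts freely over the generic point of $\Spec A$, so $\Spec B \to \Spec A$ is generically a $G_x$-torsor; in particular $[\mathrm{Frac}(B):\mathrm{Frac}(A)] = n := \#G_x$. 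As $A$ is strictly henselian and the closed point of $\Spec B$ is the unique point over the closed point of $\Spec A$ carrying full stabilizer $G_x$, the ring $B$ is local and $A \subseteq B$ is totally ramified of degree $n$; tameness (Definition 2.13) makes it tamely ramified.

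Next I would pin down the group. The action of $G_x$ on the $1$-dimensional cotangent space $\mathfrak m_B/\mathfrak m_B^2$ defines a character $\rho: G_x \to \mathbb{G}_m$. I claim $\rho$ is injective: an element $g$ in its kernel fixes a uniformizer of $B$ modulo $\mathfrak m_B^2$, and since $\#G_x$ is prime to the characteristic one can linearize the action, forcing $g$ to fix $B$ pointwise; faithfulness of the action (again from generic freeness) then gives $g = 1$. Hence $G_x$ is cyclic of order $n$, and $\rho$ identifies it canonically with $\mu_n$; this already yields the final assertion about the stabilizer group.

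Finally, the explicit presentation. Using tameness I would average to produce a uniformizer $z \in \mathfrak m_B$ that is an eigenvector for the tautological character, i.e. $\zeta \cdot z = \zeta z$ for $\zeta \in \mu_n$. Then $z^n$ is $\mu_n$-invariant, so $f := z^n \in B^{\mu_n} = A$, and since the extension is totally ramified of degree $n$ one has $v_B(f) = n$, so $f$ is a uniformizer of $A$ generating $\mathfrak m_{X,x}$. The polynomial $z^n - f$ is Eisenstein, so $A[z]/(z^n - f)$ is a DVR, totally ramified of degree $n$ over $A$; the $A$-algebra map $A[z]/(z^n - f) \to B$ sending $z \mapsto z$ is injective and induces an isomorphism on fraction fields (both are the Kummer extension generated by an $n$th root of $f$) between two DVRs finite of the same degree $n$ over $A$, hence is an isomorphism. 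Transporting the $\mu_n$-action then gives exactly the claimed action, trivial on $A$ and multiplication by a root of unity on $z$.

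I expect the main obstacle to be this last step: producing the eigen-uniformizer and verifying that the Eisenstein extension $A[z]/(z^n - f)$ recovers $B$ on the nose — an instance of Abhyankar's lemma for tamely totally ramified extensions of strictly henselian DVRs — together with the faithfulness and tameness input needed to realize $G_x$ as $\mu_n$ acting linearly on the uniformizer.
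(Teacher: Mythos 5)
The paper itself contains no proof of this proposition: it is imported wholesale from the cited reference \cite[2.2 (i)]{Log}, so there is no internal argument to compare yours against. Your proposal is, in substance, a correct proof, and it is the standard one: reduce via the local structure theorem (Theorem 2.9, Remark 2.10) to $\Spec \ms O_{X,x}\times_X \ms X\cong [\Spec B/G_x]$ with $A:=\ms O_{X,x}=B^{G_x}$; use the trivial generic stabilizer (Propositions 2.6, 2.14) to see that the $G_x$-action is generically free, so $[\mathrm{Frac}(B):\mathrm{Frac}(A)]=n=\#G_x$ and $B/A$ is totally and (by tameness) tamely ramified; identify $G_x$ with $\mu_n$ through the faithful character on $\mathfrak{m}_B/\mathfrak{m}_B^2$; average to produce an eigen-uniformizer $z$; and conclude by Kummer/Abhyankar that $B\cong A[z]/(z^n-f)$ with $f=z^n$ a uniformizer of $A$, the $\mu_n$-action being as claimed.

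Two steps should be tightened. First, your deduction that $B$ is a local regular ring is glossed: regularity holds because $\Spec B\to[\Spec B/G_x]$ is \'etale (it is a torsor under the finite constant, hence \'etale, group scheme $G_x$ of Lemma 2.4), so $\Spec B$ is \'etale over the regular stack $\Spec A\times_X\ms X$; and locality holds because $B$, being finite over the strictly henselian $A$, is a product of local rings which $G_x$ permutes transitively (since $B^{G_x}=A$ is local), while the point over $x$ carries full stabilizer $G_x$ and hence its factor is preserved by all of $G_x$, forcing a single factor. Second, the inference ``injective between two DVRs finite of the same degree $n$ over $A$, hence an isomorphism'' is not valid as literally stated: an injection of finite free $A$-modules of equal rank need not be surjective (multiplication by a uniformizer on $A$ already fails). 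What makes the step work is normality, which you have available: $A[z]/(z^n-f)$ is a DVR, hence integrally closed in its fraction field, which your Kummer computation identifies with $\mathrm{Frac}(B)$; since $B$ is integral over $A$, hence over $A[z]/(z^n-f)$, it follows that $B\subseteq A[z]/(z^n-f)$, giving equality. With these two points made precise, the argument is complete.
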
 

\subsection{Root Stacks}
In this subsection, we describe the root stack construction, following \cite[10.3]{Olssonbook}. Essentially, the $n$th root stack of a scheme $X$ along a divisor $D$ on $X$ is the universal object $\ms{X}_n$ mapping to $X$ such that there is an $n$th root of $D$ pulled back to $\ms{X}_n$ on $\ms{X}_n$. In general, for a scheme $X$ and an effective Cartier divisor $D$ on $X$, it may not be possible to take an $n$th root of $D$ on $X$. Whenever this is not possible, stacky structure appears in the root stack. We have the following proposition relating stacky curves to root stacks.

\begin{prop} 
    Let $\ms X$ be a tame stacky curve with trivial generic stabilizer, course space $\pi:\ms X \to X$, and stacky points $x_1, \ldots, x_r$ with stabilizer groups $\mu_{n_i}$. Let 
    $$\ms X_{\mathbf{n}}=\ms X_{n_1}\times_X \cdots \times_X \ms X_{n_r}$$
    where $\ms X_{n_i}$ is the $n_i$th root stack of $X$ at $\pi(x_i)$. Then $\ms X\cong \ms X_{\mathbf{n}}$.
\end{prop}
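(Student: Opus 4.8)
\emph{Plan.} The strategy is to build a morphism $\Phi:\ms X\to\ms X_{\mathbf n}$ over $X$ out of the universal property of root stacks, and then to verify it is an isomorphism étale-locally on the coarse space $X$, where Proposition 2.17 and the standard local description of a root stack reduce the question to a transparent comparison of quotient presentations.

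\emph{Construction of $\Phi$.} For each $i$ write $D_i=\pi(x_i)$, a reduced point on the smooth curve $X$ and hence an effective Cartier divisor, with canonical section $s_{D_i}$ of $\ms O_X(D_i)$. By the universal property of the $n_i$th root stack, a morphism $\ms X\to\ms X_{n_i}$ over $X$ is the same as an $n_i$th root of the pair $(\pi^*\ms O_X(D_i),\pi^*s_{D_i})$ on $\ms X$. I would produce such a root canonically by setting $\mc D_i=(\pi^{-1}D_i)_{\mathrm{red}}$. In the chart $[\Spec(\ms O_{X,x_i}[z]/(z^{n_i}-f))/\mu_{n_i}]$ of Proposition 2.17 the local equation $f$ of $D_i$ equals $z^{n_i}$, so $\mc D_i$ is the effective Cartier divisor cut out by $z$ and $\pi^*D_i=n_i\,\mc D_i$. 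Thus $\ms O_{\ms X}(\mc D_i)$ carries a canonical section whose $n_i$th tensor power is identified with $(\pi^*\ms O_X(D_i),\pi^*s_{D_i})$; away from the stacky points this coincides with the trivial root given by the nonvanishing $s_{D_i}$, so the two descriptions glue to a global root. This defines $\ms X\to\ms X_{n_i}$ over $X$ for each $i$, and the universal property of the fiber product assembles these into $\Phi:\ms X\to\ms X_{\mathbf n}$.

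\emph{Isomorphism.} Being an isomorphism is fppf-local on the target, and coarse-space formation commutes with flat base change (Definition/Proposition 2.8), so I may check $\Phi$ over an étale cover $U\to X$ built from étale neighborhoods of the points $D_i$ together with their open complement. Over $X\setminus\{D_1,\dots,D_r\}$ both $\ms X$ and $\ms X_{\mathbf n}$ agree with $X$ itself (a root stack is trivial off its divisor, and $\ms X$ is a scheme there by Proposition 2.15), and $\Phi$ is the identity. Near a stacky point $x_i$, the divisors $D_j$ for $j\neq i$ avoid $x_i$, since distinct geometric points of $\ms X$ have distinct images in $X$ by property (2) of the coarse space; hence the factors $\ms X_{n_j}$ are trivial there and $\ms X_{\mathbf n}$ is modeled by the single root stack $\ms X_{n_i}$. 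The latter has the standard local presentation $[\Spec(\ms O_{X,x_i}[z]/(z^{n_i}-f))/\mu_{n_i}]$, with $\mu_{n_i}$ acting by the standard character on $z$, matching Proposition 2.17 exactly; unwinding the construction of $\Phi$ identifies the two. Thus $\Phi$ is an isomorphism over each piece of the cover, hence an isomorphism.

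\emph{Main obstacle.} I expect the substantive point to be the local comparison: verifying that the root obtained from $\mc D_i$ corresponds, under the presentation of Proposition 2.17, to the tautological root on $\ms X_{n_i}$, with matching $\mu_{n_i}$-linearizations (the standard character on $z$ on both sides) and with the identification $\pi^*D_i=n_i\,\mc D_i$. Once the local quotient models and their group actions are seen to coincide, the descent step promoting a fiberwise isomorphism to a global one, and the assembly of the factors via the universal property of the fiber product, are purely formal.
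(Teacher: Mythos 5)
Your proposal is correct and takes essentially the same route as the paper's own construction (Subsection 2.24): build the map to each factor $\ms X_{n_i}$ from the universal property of the root stack, using the ideal sheaf of the stacky point $x_i$ (your reduced preimage $\mc D_i$, cut out by $z$ in the local chart) as the canonical $n_i$th root of the pulled-back divisor, and then verify the resulting map is an isomorphism locally on $X$ by comparing Proposition 2.17 with the local model (2.22.1) of the root stack. The only cosmetic difference is that you phrase roots via the pair $(\ms O_X(D_i), s_{D_i})$, while the paper uses the dual generalized-effective-Cartier-divisor convention $(I_{\pi(x_i)},\iota)\cong(\ms O_X,\cdot\,\pi(x_i))$.
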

\begin{proof}
    Follows from \cite[1]{GS}.
\end{proof}

\begin{defn}
    Let $X$ be a scheme. A \textit{generalized effective Cartier divisor} on $X$ is a pair $(L,\rho)$, where $L$ is a line bundle on $X$ and $\rho: L \to \ms O_X$ is a morphism of $\ms O_X$-modules. An isomorphism $\sigma:(L',\rho')\cong (L,\rho)$ of generalized effective Cartier divisors on $X$ is an isomorphism of line bundles $\sigma: L'\cong L$ such that the diagram 
    $$\xymatrix{L' \ar[rr]^-{\sigma} \ar[rd]_-{\rho'}&& L \ar[ld]^-{\rho} \\ &\ms O_x }$$
    commutes. A product structure on generalized effective Cartier divisors is defined by 
$$(L,\rho)\cdot (L',\rho')=(L\otimes L', \rho\otimes \rho'),$$
where $\rho\otimes \rho': L\otimes L'\to \ms O_X\otimes_{\ms O_X} \ms O_X\cong \ms O_X$. For a morphism of schemes $g:Y\to X$ and a generalized effective Cartier divisor $(L,\rho)$ on $X$, $(g^*L, g^*\rho)$ is a generalized effective Cartier divisor on $Y$, where $g^*\rho: g^*L\to g^*\ms O_X=\ms O_Y$. Note that we cannot always pull back effective Cartier divisors, but we can always pull back generalized effective Cartier divisors.
\end{defn}

We define a fibered category $\ms D$ over the category of schemes with objects pairs $(T, (L,\rho))$, where $T$ is a scheme and $(L,\rho)$ is a generalized effective Cartier divisor on $T$. Morphisms in $\ms D$ are pairs 
$$(g,g^b): (T', (L', \rho'))\to (T,(L,\rho)),$$ 
where $g:T'\to T$ is a morphism of schemes and $g^b:(L',\rho')\to (g^*L, g^*\rho)$ is an isomorphism of generalized effective Cartier divisors on $T'$. This fibered category is a stack because line bundles and morphisms of line bundles satisfy effective descent. 

\begin{lem} \cite[10.3.7]{Olssonbook} The stack $\ms D$ is isomorphic to the stack quotient $[\mathbb{A}^1/\mathbb{G}_m]$, where for a scheme $T$, $u\in \mathbb{G}_m(T)$ acts by left multiplication on $t\in \mathbb{A}^1(T)$.
\end{lem}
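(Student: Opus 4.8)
The plan is to exhibit an explicit equivalence of fibered categories between $\ms D$ and $[\mathbb{A}^1/\mathbb{G}_m]$ and check it is an isomorphism of stacks. First I recall the groupoid presentation of the quotient: for a scheme $T$, an object of $[\mathbb{A}^1/\mathbb{G}_m](T)$ is a $\mathbb{G}_m$-torsor $P\to T$ together with a $\mathbb{G}_m$-equivariant map $P\to \mathbb{A}^1$. Since a $\mathbb{G}_m$-torsor is the same datum as a line bundle $L$ on $T$ (via the associated line bundle $L=P\times^{\mathbb{G}_m}\mathbb{A}^1$, or equivalently the sheaf of sections of the contracted product), the plan is to reinterpret the equivariant map $P\to\mathbb{A}^1$ as an $\ms O_T$-linear map $\rho\colon L\to\ms O_T$. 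Concretely, given the weight-one $\mathbb{G}_m$-action on $\mathbb{A}^1$, a $\mathbb{G}_m$-equivariant morphism $P\to\mathbb{A}^1$ is precisely a global section of the line bundle attached to $P$ that transforms with the correct weight, which dualizes to a map $\rho\colon L\to\ms O_T$. This produces a functor $F\colon[\mathbb{A}^1/\mathbb{G}_m]\to\ms D$ sending $(P,P\to\mathbb{A}^1)$ to $(T,(L,\rho))$.

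Next I would construct a quasi-inverse $G\colon\ms D\to[\mathbb{A}^1/\mathbb{G}_m]$ directly. Given $(T,(L,\rho))$, set $P=\uIsom_{\ms O_T}(\ms O_T, L)$, the frame bundle of $L$, which is a $\mathbb{G}_m$-torsor over $T$; the map $\rho$ then induces the required $\mathbb{G}_m$-equivariant morphism $P\to\mathbb{A}^1$ by evaluating $\rho$ against a chosen trivialization. The bulk of the routine work is then checking that $F$ and $G$ respect morphisms: a morphism in $[\mathbb{A}^1/\mathbb{G}_m]$ over $g\colon T'\to T$ is an isomorphism of torsors compatible with the equivariant maps, and under $F$ this translates exactly into an isomorphism $g^b\colon(L',\rho')\cong(g^*L,g^*\rho)$ of generalized effective Cartier divisors, matching the morphisms in $\ms D$ verbatim. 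I would also verify that $FG$ and $GF$ are naturally isomorphic to the respective identities, which amounts to the canonical identification of a line bundle with the associated bundle of its frame torsor.

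Because both $\ms D$ and $[\mathbb{A}^1/\mathbb{G}_m]$ are already known to be stacks (the former by the descent remark preceding the lemma, the latter as a quotient stack), it suffices to prove that $F$ is an equivalence of fibered categories over each $T$ and is compatible with pullback; fppf-local statements then need not be re-checked. The main obstacle I anticipate is getting the weight conventions exactly right, namely matching the stated left-multiplication action of $u\in\mathbb{G}_m(T)$ on $t\in\mathbb{A}^1(T)$ with the direction of $\rho\colon L\to\ms O_T$ (as opposed to a section $\ms O_T\to L$). One must be careful that the equivariant map into $\mathbb{A}^1$ with weight-one scaling corresponds to a map \emph{out of} $L$ into $\ms O_T$ rather than its dual; pinning down this sign/weight bookkeeping, and confirming it is consistent with the product structure $(L,\rho)\cdot(L',\rho')=(L\otimes L',\rho\otimes\rho')$ corresponding to the monoidal structure on $[\mathbb{A}^1/\mathbb{G}_m]$ coming from multiplication on $\mathbb{A}^1$, is where the real care lies. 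Everything else reduces to the standard dictionary between line bundles and $\mathbb{G}_m$-torsors.
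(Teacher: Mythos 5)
Your proof is correct, and since the paper gives no argument of its own for this lemma (it simply cites \cite[10.3.7]{Olssonbook}), your explicit torsor/line-bundle dictionary is precisely the standard proof of that cited result. In particular, your frame-bundle convention $P=\uIsom_{\ms O_T}(\ms O_T,L)$ does make weight-one equivariant maps $P\to\mathbb{A}^1$ correspond to cosections $\rho\colon L\to\ms O_T$ (rather than to sections $\ms O_T\to L$), so the weight/direction bookkeeping you flagged as the main danger resolves exactly as you anticipated, and it is likewise compatible with the product structure used later for the map $p_n$.
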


Consider the morphisms $\mathbb{A}^1\to \mathbb{A}^1$ by $f\mapsto f^n$ and $\mathbb{G}_m\to\mathbb{G}_m$ by $u\mapsto u^n$. This defines a morphism of stacks $p_n:[\mathbb{A}^1/\mathbb{G}_m]\to[\mathbb{A}^1/\mathbb{G}_m]$, which corresponds to the morphism $p_n:\ms D\to \ms D$, $(T,(L,\rho))\mapsto(T,(L^{\otimes n},\rho^{\otimes n}))$. 

 Let $X$ be a scheme, $(L,\rho)$ a generalized effective Cartier divisor on $X$, and consider the fiber product diagram
$$\xymatrix{\ms X_n \ar[r] \ar[d]^-{\pi} & \ms{D} \ar[d]^-{p_n} \\ X \ar[r]^-{(L,\rho)} & \ms{D}},$$
where $(L,\rho):X\to \ms D$ sends an $X$-scheme $f:T\to X$ to $(T,(f^*L,f^*\rho))$.
\begin{defn}
    We define $\ms X_n$ to be the \textit{$n$th root stack of $X$ associated to $(L,\rho)$}, and it is the stack with objects  
    $$(f:T\to X, (M,\lambda), \sigma:(M^{\otimes n}, \lambda^{\otimes n})\cong (f^*L,f^*\rho)),$$
    where $f:T\to X$ is an $X$-scheme, $(M,\lambda)$ is a generalized effective Cartier divisor on $T$, and $\sigma$ is an isomorphism of generalized effective Cartier divisors.
    Morphisms are 
    $$(h,h^b):(f':T'\to X, (M',\lambda'), \sigma')\to (f:T\to X, (M,\lambda), \sigma),$$
    where $h:T'\to T$ is a morphism of $X$-schemes, and $h^b:(M',\lambda')\to (h^*M,h^*\lambda)$ is an isomorphism of generalized effective Cartier divisors on $T'$, such that the diagram
    $$\xymatrix{M'^{\otimes n} \ar[rr]^-{h^{b \otimes n}} \ar[dr]_-{\sigma'} && h^*M^{\otimes n} \ar[dl]^-{h^*\sigma} \\ & f'^*L=h^*f^*L & }$$
commutes.
\end{defn}

Using the Yoneda lemma, the data $(f:T\to X, (M,\lambda), \sigma)$ as an object of $\ms X_n$ corresponds to a morphism $T\to \ms{X}_n$. In particular, the identity map $\ms{X}_n\to \ms X_n$ corresponds to the data 
$(\pi:\ms X_n\to X, (\ms{L}_n, \lambda_n), \sigma),$
where $(\ms L_n,\lambda_n)$ is a generalized effective Cartier divisor on $\ms{X}_n$, whose $n$th power is isomorphic to $(\pi^*L, \pi^*\rho)$. That is, we indeed get a line bundle on $\ms{X}_n$ whose $n$th power is isomorphic to the pullback of $L$. 

When $(L,\rho)=(\ms{O}_X,\cdot f)$, we have an isomorphism \cite[10.3.10(ii)]{Olssonbook}
\begin{equation}
\ms X_n \cong[\uSpec_X(\ms O_X[z]/(z^n-f))/\mu_n],
\end{equation}
where for a scheme $T$, $\mu_n(T)$ acts trivially on $\ms O_X(T)$ and by left multiplication on $z$. We also have an isomorphism of $[\uSpec_X(\ms O_X[z]/(z^n-f))/\mu_n]$ with $X$ over the open set of $X$ where $f$ is invertible, and we get a line bundle on $\ms{X}_n$ whose $n$th power is isomorphic to the pullback of the ideal sheaf on $f$ in $\ms O_X$.

\begin{ex}
Consider the effective Cartier divisor $(x)$ on $\mathbb{A}^1$. We would like to take the $n$th root stack of $\mathbb{A}^1$ with respect to $(x)$. The corresponding generalized effective Cartier divisor is $(I_x, \iota)$, where $\iota:I_x\to \ms{O}_{\mathbb{A}^1}$ is the inclusion of the ideal sheaf at $(x)$. Note that $(I_x,\iota)\cong(\ms{O}_{\mathbb{A}^1}, \cdot x)$ via the diagram
$$\xymatrix{I_x \ar[rr]^-{\cdot 1/x} \ar[dr]_-{\iota} && \ms{O}_{\mathbb{A}^1} \ar[dl]^-{\cdot x} \\ & \ms{O}_{\mathbb{A}^1}}.$$

Then the $n$th root stack of $\mathbb{A}^1$ by the divisor $(x)$ is given by the fiber product diagram 
$$\xymatrix{\ms X_n \ar[r] \ar[d] & \ms{D} \ar[d]^-{p_n} \\ \mathbb{A}^1\ar[r]^-{(\ms O_{\mathbb{A}_1}, \cdot x)} & \ms{D}},$$
and $\ms X_n$ is the stack over $\mathbb{A}^1$ with objects triples
$(f:T\to\mathbb{A}^1, (M,\lambda), \sigma)$ where $(M,\lambda)$ is a generalized effective Cartier divisor on $T$ and 
$\sigma:(M^{\otimes n}, \lambda^{\otimes n})\cong (f^* \ms O_{\mathbb{A}^1}, \cdot f^*x)$
is an isomorphism of generalized effective Cartier divisors. 

We have $\ms X_n$ is isomorphic to $[\mathbb{A}^1/\mu_n]$, where for a scheme $T$, $\zeta\in\mu_n(T)$ acts by left multiplication on $t\in\mathbb{A}^1(T)$. Furthermore, over $\mathbb{A}^1\setminus \{0\}$, the subset of $\mathbb{A}^1$ where $x$ is invertible, $[\mathbb{A}^1/\mu_n]$ is isomorphic to $\mathbb{A}^1$.
\end{ex}

\subsection{} Although Proposition 2.19 is proved in \cite{GS}, we construct the isomorphism in the case where $\ms X$ is a stacky curve. Let $\ms X$ have stacky points $x_1,\ldots, x_r$ with stabilizer groups $G_i=\mu_{n_i}$ and let $\pi:\ms X\to X$ be the coarse space. Let $I_{\pi(x_i)}$ be the ideal sheaf of $X$ at $\pi(x_i)$ and $\ms I_{x_i}$ the ideal sheaf of $\ms X$ at $x_i$. 

Let $\ms X_{n_i}$ be the $n_i$th root stack of $X$ at $\pi(x_i)$, which, as in Example 2.23 is given by the fiber product $X\times_{(\ms O_X,\cdot \pi(x_i)),\ms D, p_n} \ms D$, after identifying the ideal sheaf $(I_{\pi(x_i)}, \iota_{\pi(x_i)})$ with $(\ms O_X,\cdot \pi(x_i))$. Similarly, we identify $(\ms I_{x_i},\iota_{x_i})$ with $(\ms O_{\ms X}, \cdot x_i)$. To give a map $\ms X\to \ms X_{\mathbf{n}}$, we must give a map $\ms X\to \ms X_{n_i}$ for every $i$. We take the map which corresponds to the data $(\pi: \ms X\to X, (\ms O_{\ms X}, \cdot x_i), \sigma)$, where $\sigma: \ms O_{\ms X}^{\otimes n}\to \pi^*\ms O_X\cong \ms O_{\ms X}$ is the multiplication morphism. 

Thus, we get a map $\ms X\to \ms X_{\mathbf{n}}$ which we can check is an isomorphism locally. Over the open set of $X$ away from the points $\pi(x_1), \ldots, \pi(x_r)$, $\ms X$ and $\ms X_{\mathbf{n}}$ are both isomorphic to $X$, and over an open set of $X$ away from all $x_j$ except for one $x_i$, both are isomorphic to $\ms X_{n_i}$ (compare Proposition 2.17 and equation (2.22.1)). Furthermore, on each $\ms X_{n_i}$, we get a generalized effective Cartier divisor $(\ms L_{n_i}, \lambda_{n_i})$ whose $n$th power is isomorphic to $(\pi^*\ms{O}_X=\ms{O}_{\ms{X}_{n_i}}, \pi^*x_i)$, and thus we can conclude that $\ms L_{n_i}\cong \ms I_{x_i}$ on this open set.

\section{Picard Groups of Root Stacks}

In this section, we prove Theorem 1.1. Let $\ms Y$ be a tame stacky curve with trivial generic stabilizer, coarse moduli space $\pi:\ms Y \to X$, and stacky points $x_1, \ldots x_r$ having stabilizer groups $G_i=\mu_{n_i}$. Let $\ms L\in\Pic( \ms Y)$. Then at each $x_i$, we get a line bundle $x_i^*\ms L$ on $BG_i$. 

\begin{prop} \cite[9.E]{Olssonbook}
    Let $R$ be a ring and $G$ a finite group acting on $R$. Then we have an equivalence of categories between quasi-coherent sheaves on $[\Spec R/G]$ and $\mathrm{Mod}_R^G$, the category of $R$-modules with $R$-semi-linear $G$-action. In particular, we get an equivalence of categories between line bundles on BG and rank one representations of $G$. 
\end{prop}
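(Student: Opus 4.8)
The plan is to realize $\Qcoh([\Spec R/G])$ through descent along the canonical smooth (here étale, since $G$ is finite and acts on an affine scheme) presentation of the quotient stack, and then to unwind the resulting descent datum into a semi-linear action. The quotient stack $[\Spec R/G]$ is presented by the action groupoid $G\times \Spec R \rightrightarrows \Spec R$, whose source map $s$ is the projection and whose target map $t$ is the action. Because quasi-coherent sheaves satisfy fppf descent and $\Spec R \to [\Spec R/G]$ is a smooth surjection (in particular an fppf cover), a quasi-coherent sheaf on $[\Spec R/G]$ is the same datum as a quasi-coherent sheaf $\widetilde M$ on $\Spec R$ together with an isomorphism $\theta: s^*\widetilde M \xrightarrow{\sim} t^*\widetilde M$ on $G\times \Spec R$ satisfying the cocycle condition on $G\times G\times \Spec R$. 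Under the standard equivalence $\Qcoh(\Spec R)\simeq \mathrm{Mod}_R$ sending $\widetilde M$ to $M=\Gamma(\Spec R,\widetilde M)$, this descent datum is what I must translate.

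First I would use that $G$ is a finite constant group to write $G\times \Spec R \cong \coprod_{g\in G}\Spec R$, with the $g$-indexed component mapping to $\Spec R$ by the identity (under $s$) and by $\Spec g$ (under $t$), where $g:R\to R$ is the ring automorphism giving the action. Restricting $\theta$ to the $g$-component then produces, for each $g\in G$, an isomorphism $\theta_g: M \xrightarrow{\sim} {}_g M$, where ${}_g M$ denotes $M$ with $R$ acting through $g$; equivalently, an additive bijection $\rho_g: M\to M$ with $\rho_g(r\cdot m)=g(r)\cdot \rho_g(m)$, i.e. a semi-linear map. The cocycle condition, read off on $\coprod_{g,h}\Spec R$, unwinds precisely to $\rho_{gh}=\rho_g\rho_h$ together with the normalization $\rho_e=\mathrm{id}$, so that $(\rho_g)_{g\in G}$ is exactly an $R$-semi-linear $G$-action on $M$. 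Morphisms of quasi-coherent sheaves correspond under descent to $R$-linear maps commuting with the $\theta$'s, hence to $G$-equivariant $R$-linear maps, so the assignment is an equivalence $\Qcoh([\Spec R/G])\simeq \mathrm{Mod}_R^G$. I would also record that pullback of sheaves is monoidal, so this equivalence respects tensor products.

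Finally, for the statement about $BG=[\Spec k/G]$, I would specialize to $R=k$ with the trivial $G$-action. Then each $g:k\to k$ is the identity, so semi-linearity degenerates to ordinary $k$-linearity and $\mathrm{Mod}_k^G$ is the category of $k$-linear representations of $G$. Since the equivalence is monoidal, invertible objects match invertible objects: a line bundle on $BG$ pulls back along $\Spec k \to BG$ to an invertible $k$-module, i.e. a one-dimensional $k$-vector space, so its underlying representation is one-dimensional; conversely any rank one representation is invertible, with inverse the dual representation. Hence line bundles on $BG$ correspond to rank one representations of $G$.

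The step I expect to require the most care is the bookkeeping in the middle paragraph: fixing the variance so that the cocycle condition becomes the group law $\rho_{gh}=\rho_g\rho_h$ rather than its opposite, and confirming that the semi-linear twist is by $g$ and not $g^{-1}$. These hinge on the left-versus-right conventions and on which of $s,t$ is taken as the action map, and they must be pinned down consistently to yield an honest functor in both directions. The descent statement itself and the affine–module translation are formal.
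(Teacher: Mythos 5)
Your descent argument is correct: presenting $[\Spec R/G]$ by the action groupoid $\coprod_{g\in G}\Spec R\rightrightarrows \Spec R$, unwinding the cocycle datum into semi-linear maps $\rho_g$, and then specializing to $R=k$ for the $BG$ statement is exactly the standard proof of this fact. The paper itself gives no proof at all --- it simply cites this as an exercise in Olsson's book --- so your write-up supplies precisely the argument that citation stands in for, and the convention issues you flag (which of $s,t$ is the action, $g$ versus $g^{-1}$) are indeed the only points requiring care.
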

Write $\chi:\Pic({BG})\to \mathrm{Hom}(G, \mathbb{G}_m)$, $\ms{L}\mapsto \chi_{\ms{L}}$ for the equivalence in Proposition 3.1. Now, define 
$$\chi: \Pic(\ms{Y})\to \prod_{i=1}^r \Pic(BG_i)\cong \prod_{i=1}^r \Z/n_i\Z$$
by $\chi(\ms L)=(\chi_{x_1^*(\ms L)}, \ldots, \chi_{x_r^*(\ms L)}),$
where each of the $\chi_{x_i^*(\ms L)}$ is the one dimensional representation of $G_i$ associated to the line bundle at the residual gerbe, $x_i^*(\ms L)$ on $BG_i$. 

We have a sequence
\begin{align}
    \xymatrix{0 \ar[r] & \Pic(X)  \ar[r]^-{\pi^*} & \Pic(\ms{Y}) \ar[r]^-{\chi} & \prod_{i=1}^r\Z/n_i\Z \ar[r] & 0}.
\end{align}

\begin{prop}
    The sequence (3.1.1) is exact.
\end{prop}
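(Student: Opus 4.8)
The plan is to check exactness separately at the three nonzero terms, dispatching the two ends by formal arguments and reserving the real work for the middle. For injectivity of $\pi^*$ I would use that $\pi$ is a coarse space, so $\pi_*\ms O_{\ms Y}\cong\ms O_X$ by Definition/Proposition 2.7: if $\pi^*L\cong\ms O_{\ms Y}$, then $L\cong\pi_*\pi^*L\cong\pi_*\ms O_{\ms Y}\cong\ms O_X$, where the first isomorphism is the projection formula combined with $\pi_*\ms O_{\ms Y}\cong\ms O_X$. For the inclusion $\mathrm{image}(\pi^*)\subseteq\ker\chi$ I would observe that each inclusion $x_i:BG_i\hookrightarrow\ms Y$ composed with $\pi$ factors through the point $\pi(x_i)=\Spec k$, so $x_i^*\pi^*L$ is pulled back from a point and is the trivial $G_i$-representation; hence $\chi(\pi^*L)=0$.

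For surjectivity of $\chi$ I would use the root stack presentation $\ms Y\cong\ms Y_{\mathbf n}$ of Proposition 2.19. On the $i$th factor $\ms Y_{n_i}$ there is the tautological line bundle coming from $(\ms L_{n_i},\lambda_{n_i})$; pulling it back to $\ms Y$ gives $\ms M_i\in\Pic(\ms Y)$. Using the local description $[\uSpec_X(\ms O_X[z]/(z^{n_i}-f))/\mu_{n_i}]$ of equation (2.22.1), in which $\mu_{n_i}$ acts on $z$ by the standard character, the fiber of $\ms M_i$ at $BG_i$ carries the standard character, so $\chi_{x_i^*\ms M_i}$ is a generator of $\Z/n_i\Z$; and over a neighborhood of $x_j$ with $j\neq i$ the $i$th root stack factor is isomorphic to $X$, so $\ms M_i$ restricts trivially to $BG_j$. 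Thus $\chi(\ms M_i)=e_i$, and the $\ms M_i$ together surject onto $\prod_i\Z/n_i\Z$.

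The main obstacle is exactness in the middle, namely $\ker\chi\subseteq\mathrm{image}(\pi^*)$. Given $\ms L$ with $\chi(\ms L)=0$, the goal is to show the adjunction counit $\pi^*\pi_*\ms L\to\ms L$ is an isomorphism, which together with $\pi_*\pi^*=\mathrm{id}$ on line bundles identifies $\ms L\cong\pi^*(\pi_*\ms L)$. This is local on $X$, so by the local structure theorem (Proposition 2.17 and Remark 2.14) I reduce to $[\Spec R/\mu_n]$ with $R=A[z]/(z^n-f)$ and $A=\ms O_{X,x_i}$. By Proposition 3.1 the restriction of $\ms L$ is a free rank one $R$-module $M$ with semilinear $\mu_n$-action, equivalently a $\Z/n$-graded free rank one $R$-module. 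The crux is to show $M$ is generated by a single homogeneous element $m$ whose degree equals $\chi(\ms L)_i$: using tameness (so $\mu_n$ is linearly reductive and the $\Z/n$-grading is honest) together with graded Nakayama over the local ring $R/\mathfrak m_A\cong k[z]/(z^n)$, a minimal homogeneous generator lifts a generator of the one-dimensional graded fiber. When $\chi(\ms L)_i=0$ this generator is $\mu_n$-invariant, whence $M\cong R\otimes_A M^{\mu_n}$ with $M^{\mu_n}$ free of rank one over $A=R^{\mu_n}$; that is, $\ms L$ is the pullback of a line bundle on $\Spec A$ and the counit is an isomorphism there, while away from the $x_i$ one has $\ms Y\cong X$ and the claim is trivial.

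The technical care lies in justifying this graded-generator statement and in assembling the local pullbacks globally. For the latter I would invoke that for tame stacks $\pi_*$ is exact and compatible with the flat base changes to the local rings (by the coarse space properties of Definition/Proposition 2.7), so the locally computed $M^{\mu_n}$ patch to a genuine line bundle $\pi_*\ms L$ on $X$ with $\pi^*\pi_*\ms L\cong\ms L$, completing the proof that $\ker\chi=\mathrm{image}(\pi^*)$.
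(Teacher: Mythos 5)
Your proposal is correct, and its surjectivity step is essentially the paper's: you use the tautological root-stack bundles $\ms M_i$ where the paper uses the ideal sheaves $\ms I_{x_i}$ (which are identified with those bundles in Section 2.24), and both verify via the local presentation $[\Spec(\ms O_{X,x}[z]/(z^n-f))/\mu_n]$ that the character at $x_i$ is the standard generator and trivial at $x_j$, $j\neq i$. Where you diverge is at the left and middle: the paper disposes of both by citing a black-box equivalence (Lemma 3.3, from \cite{Picequiv}) stating that $\pi^*$ identifies $\Pic(X)$ with line bundles on $\ms Y$ whose residual representations are all trivial, whereas you prove this from scratch in the case at hand --- injectivity via the projection formula and $\pi_*\ms O_{\ms Y}\cong\ms O_X$, and middle exactness by showing the counit $\pi^*\pi_*\ms L\to\ms L$ is an isomorphism locally over $X$, using tameness to get an honest $\Z/n$-grading on the rank-one module $M$ over $R=A[z]/(z^n-f)$, graded Nakayama to produce a homogeneous generator whose degree is the residual character, and invariance of that generator when the character vanishes to conclude $M\cong R\otimes_A M^{\mu_n}$. (One small imprecision: Nakayama should be applied with respect to $\mathfrak m_R=\mathfrak m_A+(z)$, not just over $R/\mathfrak m_A$; this is cosmetic since the two reductions compose.) What each route buys: the paper's citation is shorter and the cited lemma holds for tame stacks over a general base, while your argument is self-contained, exploits only the local structure theory (Proposition 2.17) that the paper develops anyway, and makes visible exactly why triviality of the finitely many residual representations forces descent to the coarse space.
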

\begin{proof}
    The following lemma shows that the sequence is exact at the left and in the middle.

\begin{lem} \cite[6.2]{Picequiv}
Let $S$ be a scheme, $\ms{X}$ a tame stack over $S$, with coarse moduli space $\pi: \ms{X}\to X$. The pullback functor
$$\pi^*:(\text{line bundles on } X)\to (\text{line bundles on } \ms{X})$$
induces an equivalence of categories between line bundles on $X$ and line bundles $\ms{L}$ on $\ms{X}$ such that for all geometric points $x:\Spec k \to \ms{X}$, the representation of $G_{x}$ corresponding to $x^*\ms{L}$ is trivial.
\end{lem}

\begin{rmk} In the case where $\ms{X}=[\Spec R/G]$ and $X=\Spec R^G$ where $R$ is a ring and $G$ is a linearly reductive group scheme, let $\ms L$ be a line bundle on $\ms X$ corresponding to a rank one $R$-module with $R$-semi-linear $G$-action $M$. Then at each geometric point $x:\Spec k\to \ms{X}$ of $\ms{X}$, the representation of $G$ corresponding to $x^*\ms{L}$ is just the action of $G$ on $M\otimes_{R} k$. 
\end{rmk}

By Proposition 3.1, only the finitely many stacky points, $x_1, \ldots x_r$, will have interesting Picard groups at their residual gerbes $BG_i$. Thus, for a line bundle $\ms{L}$ on $\ms{Y}$ to come from a line bundle on $X$, as in Lemma 3.3, it is enough to require that the representations at the finitely many stacky points are trivial. This implies that the cokernel of the pullback map is finite. We now show that the sequence above is exact on the right as well.

Let $\ms L\cong \ms I_{x_i}\otimes \cdots \otimes \ms I_{x_r},$ where $\ms I_{x_i}$ is the ideal sheaf of $x_i$ on $\ms Y$. We will show that at each stacky point, the corresponding representation of $x_i^*\ms L$ is the standard one. Locally at each stacky point, we have the local structure as in the diagram below. 

$$\xymatrix{\ms{U}_i=[\Spec \ms O_{X,\pi(x_i)}[z]/(z^{n_i}-\pi(x_i))/\mu_{n_i}] \ar[r] \ar[d] &\ms{Y} \ar[d] \\ \Spec \ms{O}_{X,\pi(x_i)} \ar[r] & X }$$

Then $\ms{L}|_{\ms{U}_i}$ is given by the tensor product of 1 copy of $\ms I_{x_i}|_{\ms U_i}$ and $r-1$ copies of $\ms O_{\ms U_i}$. Thus, over $\Spec \ms O_{X, \pi(x_i)}$, $\ms L$ is the $n$th root of the pullback along $\pi$ of the ideal sheaf at $\pi(x_i)$. Write $A=\ms O_{X,\pi(x_i)}[z]/(z^{n_i}-\pi(x_i))$. As an $A$-module with $\mu_{n_i}$-action, the pullback of the ideal sheaf at $\pi(x_i)$ is $A\cdot \pi(x_i)$, with $\mu_{n_i}$ acting by multiplication on $z$. Then the $n$th root of this is given by $A\cdot (z)$, with $\mu_{n_i}$ acting by multiplication on $z$. We must check that the representation corresponding to this line bundle pulled back to $BG_i$ is the standard representation. We have $A\cdot (z) \otimes_A k= k\cdot (z)$, with $\mu_n$ still acting by multiplication on $z$, so this indeed corresponds to the standard representation. Therefore, the sequence is exact.
\end{proof}

Let $E$ be the pushout of the diagram 
$$\xymatrix{ e_i \ar[d] &\Z^r \ar[rr]^-{(\cdot n_1,\ldots ,\cdot n_r)} \ar[d]_-{\phi} && \Z^r \ar[d] \\ I_{\pi(x_i)} & \Pic (X) \ar[rr] && E}.$$
That is, $E$ is given by 
\begin{align*}
  E =& \ \mathrm{coker}(\Z^r\to \Z^r\oplus \Pic (X), e_i\mapsto (n_ie_i,-I_{\pi(x_i)})) \\
=&\ (\Z^r\oplus \Pic (X) )/\langle (n_ie_i,-I_{\pi(x_i)})\rangle_{i=1,\ldots, r}
\end{align*}

The vertical map $\Z^r\to E$ is given by $e_i\mapsto (e_i,\ms O_X)$ for all $i$, and the horizontal map $\Pic (X)\to E$ is given by $L\mapsto (0,L)$. 

Consider the following commutative diagram
$$\xymatrix{0 \ar[r] & \Z^r \ar[r]^-{(\cdot n_1,\ldots, \cdot n_r)} \ar[d]^-{\phi}& \Z^r\ar[r] \ar[d] & \prod_{i=1}^r \Z/n_i\Z \ar[r] \ar[d]^-{=} & 0 \\
0 \ar[r] & \Pic (X) \ar[r] \ar[d]^-{=} & E \ar[r] \ar[d] & \prod_{i=1}^r\Z/n_iZ \ar[r]\ar[d]^-{=} & 0 \\
0 \ar[r] & \Pic (X) \ar[r]^-{\pi^*} & \Pic (\ms Y) \ar[r]^-{\chi} & \prod_{i=1}^r\Z/n_i\Z \ar[r] & 0}$$
where the top and bottom sequences are exact, and the map $E\to \Pic (\ms Y)$ is the unique map coming from the universal property of pushout corresponding to the diagram 
$$\xymatrix{e_i \ar[d] & \Z^r \ar[r]^-{(\cdot n_1,\ldots, \cdot n_r)} \ar[d]_-{\phi} & \Z^r \ar[d] & e_i \ar[d] \\
I_{\pi(x_i)} & \Pic (X) \ar[r]^-{\pi^*} & \Pic (\ms Y) & \ms{I}_{x_i}}.$$

We now describe the map $E\to \prod_{i=1}^r \Z/n_i\Z$, by first describing the map $E\to \Pic (\ms Y)$, and then composing this with the map $\chi:\Pic (\ms Y)\to \prod_{i=1}^r \Z/n_i\Z$. 

For each equivalence class $((a_i)_i,L)$ in $E$, write $a_i=n_ib_i+c_i$, where $b_i, c_i$ are integers and $0\leq c_i<n_i$. Then 
$$((a_i)_i,L)=((c_i)_i,L\otimes (\otimes_{i=1}^r I_{\pi(x_i)} ^{\otimes b_i}))=:((c_i)_i, L'),$$
and write
$$((c_i)_i, L')=(0, L')+((c_i)_i, \ms O_X).$$
Since $(0,L')$ must map to $\pi^*L'\in\Pic (\ms Y)$, and $((c_i)_i, \ms O_X)$ must map to $\otimes_{i=1}^r \ms I_{x_i}^{\otimes c_i}\in \Pic (\ms Y)$, we have 
$$((a_i)_i, L)\mapsto \pi^*L'\otimes (\otimes_{i=1}^r \ms I_{x_i}^{\otimes c_i}).$$
Then the map $E\to \prod_{i=1}^r \Z/n_i\Z$ is given by 
$$((a_i)_i, L)\mapsto (c_i)_i.$$
We can now check directly that the middle sequence is also exact, so by the 5-lemma, $E\cong \Pic (\ms Y)$. This completes the proof of Theorem 1.1.
\qed

\section{Interlude on Gerbes}
In this section, we review gerbes, following \cite[IV]{Giraudbook} and \cite[12.2]{Olssonbook}. In particular, a nonabelian gerbe is associated not to a sheaf of groups, but rather, a band. We discuss the stack of bands in this section as well. 

\begin{defn}
    Let $C$ be a site. A \textit{gerbe over $C$} is a stack $\pi: \mathscr{G}\to C$ such that
    \begin{enumerate}
    \item[(G1)] For any $Y\in C$, there exists a covering $\{Y_i\to Y\}_{i\in I}$ such that $\ms{G}(Y_i)$ is nonempty for each $i\in I$.
    \item[(G2)] For any two objects $y,y'\in \ms{G}(Y)$, there exists a covering $\{f_i:Y_i\to Y\}_{i\in I}$ such that $f_i^*y$ and $f_i^*y'$ are isomorphic in $\ms{G}(Y_i)$ for all $i\in I$. 
    \end{enumerate}
\end{defn}

For any gerbe $\pi:\ms{G}\to C$ and $Y\in C$, if $\sigma:y\to y'$ is an isomorphism in $\ms{G}(Y)$, then we get an induced isomorphism 
$$\Phi:\uAut_Y(y)\to \uAut_Y(y'),\  \alpha\mapsto \sigma \alpha \sigma^{-1}.$$

\begin{rmk}
    To be more precise, $\uAut_Y(y)$ is the sheaf of groups on $C/Y$, given by 
    $$(f:X\to Y)\mapsto \Aut_X(f^*y).$$
    Then the isomorphism $\sigma:y\to y'$ induces an isomorphism $\Phi:\uAut_Y(y)\to \uAut_Y(y')$, which, over $f:X\to Y$ is the isomorphism 
    $$\Aut_X(f^*y)\to \Aut_X(f^*y'), \ \alpha\mapsto (f^*\sigma)\alpha(f^*\sigma)^{-1},$$
    where $f^*\sigma:f^*y\to f^*y'$ is an isomorphism in $\ms{G}(X)$. 
\end{rmk}

\begin{rmk}
    If $\ms Y$ is a stack over a scheme $S$, then by a gerbe over $\ms Y$, we mean a gerbe over the \'etale site of $\ms Y$, $p:\ms X\to \ms Y$. Usually, by $\uAut_T(t)$ for an $S$-scheme $T$ and a $T$-point $t:T\to \ms X$, we mean the fiber product $T\times_{\ms X} \ms I_{\ms X/ S}$, but to apply the definitions in this section, we want $\uAut_T(t)$ to be the fiber product $T\times_{\ms X} \ms I_{\ms X/\ms Y}$. If confusion arises, we will specify $\uAut_T(t)$ to be viewed in $\ms I_{\ms X/\ms Y}$ or $\ms I_{\ms X/S}$.
\end{rmk}
The isomorphism $\Phi$ is not canonical, but depends on the choice of isomorphism $\sigma$. Indeed, given a different choice $\sigma':y\to y'$, the map $\Phi$ is given by $\alpha\mapsto \sigma'\alpha\sigma'^{-1}$, which differs from the first map by conjugation by $\sigma\sigma'^{-1}\in\uAut_Y(y)$. In the case where $\uAut_Y(y)$ is abelian for every $y\in \ms{G}$, the isomorphism $\Phi$ is canonical. We get a sheaf of abelian groups $A$ on $C$ and a canonical isomorphism $A_{\pi(y)}\to \uAut_Y(y)$ for every $y\in \ms{G}$. In the abelian case, we have the following definition.

\begin{defn}
    Let $C$ be a site and $A$ a sheaf of abelian groups on $C$. An \textit{$A$-gerbe over $C$} is a gerbe over $C$, $\pi:\ms{G}\to C$ together with an isomorphism of sheaves of groups on $C/\pi(y)$,
    $$\iota_y:A_{\pi(y)}\to \uAut_{\pi(y)}(y),$$
    for every $y\in \ms{G}$ such that 
    \begin{enumerate}
    \item[(G3)] For every $Y\in C$ and isomorphism $\sigma: y\to y'$ in $\ms{G}(Y)$, the diagram 
    $$\xymatrix{ & A \ar[dl]_-{\iota_y} \ar[dr]^-{\iota_{y'}} & \\ \uAut_Y(y) \ar[rr]^-{\sigma} & & \uAut_Y(y')}$$
    commutes. 
    \end{enumerate}
\end{defn}

In the case where not every $\uAut_Y(y)$ is abelian, we get a band $G$ on $C$ and a canonical isomorphism $\iota_y: G_{\pi(y)}\to \underline{\uAut_Y(y)}$ for every $y\in\ms G$, where $\underline{\uAut_Y(y)}$ is the band on $C/Y$ associated to $\uAut_Y(y)$. We say that the gerbe $p:\ms{G}\to C$ is \textit{banded by $G$}. We describe the category of bands now. 

Let $\sGp$ be the category of sheaves of groups on a site $C$, and let $\ms F$ and $\ms G$ be objects in $\sGp$. Define 
$$\Hex(\ms F,\ms G):=\ms G\backslash \Hom(\ms F,\ms G) /\ms F$$
where for $\varphi\in\Hom(\ms F,\ms G)$, $g\in G$, $f\in F$, $x\in F$, the action of $G$ on the left and $F$ on the right on $\Hom(F,G)$ is given by
$$(g\cdot \varphi)(x)=g\varphi(x) g^{-1},$$
$$(\varphi\cdot f)(x)= \varphi(fxf^{-1}).$$

\begin{lem}
    We have isomorphisms
    $$\ms G\backslash \Hom(\ms F,\ms G)/\ms F\cong \ms G\backslash \Hom(\ms F,\ms G)\cong \Int(\ms G)\backslash \Hom(\ms F,\ms G),$$ where $\Int(\ms G)=\ms G/Z(\ms G)$.
\end{lem}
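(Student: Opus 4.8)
The plan is to establish the two claimed isomorphisms separately, since each reduces to a single elementary identity describing how the two actions interact. The first isomorphism $\ms G\backslash\Hom(\ms F,\ms G)/\ms F\cong \ms G\backslash \Hom(\ms F,\ms G)$ asserts that, once we have quotiented by the left conjugation action of $\ms G$, the right action of $\ms F$ becomes trivial; the second isomorphism $\ms G\backslash\Hom(\ms F,\ms G)\cong \Int(\ms G)\backslash\Hom(\ms F,\ms G)$ asserts that the left action factors through $\Int(\ms G)$ without enlarging the orbits. I would prove each by exhibiting the relevant orbit coincidence directly.

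For the first isomorphism, the key observation is that the right $\ms F$-action is absorbed into the left $\ms G$-action. For $\varphi\in\Hom(\ms F,\ms G)$ and local sections $f,x$ of $\ms F$, I would compute, using that $\varphi$ is a homomorphism,
$$(\varphi\cdot f)(x)=\varphi(fxf^{-1})=\varphi(f)\varphi(x)\varphi(f)^{-1}=(\varphi(f)\cdot\varphi)(x).$$
Since $\varphi(f)$ is a local section of $\ms G$, this shows $\varphi\cdot f=\varphi(f)\cdot\varphi$, so the right $\ms F$-orbit of $\varphi$ is contained in its left $\ms G$-orbit. Consequently each $\ms G$--$\ms F$ double coset coincides with the corresponding left $\ms G$-coset, and the natural surjection $\ms G\backslash\Hom(\ms F,\ms G)\to \ms G\backslash\Hom(\ms F,\ms G)/\ms F$ is a bijection.

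For the second isomorphism, I would observe that the center $Z(\ms G)$ acts trivially under conjugation: if $g$ is a local section of $Z(\ms G)$, then $(g\cdot\varphi)(x)=g\varphi(x)g^{-1}=\varphi(x)$, so $g\cdot\varphi=\varphi$. Hence the left $\ms G$-action factors through the quotient $\Int(\ms G)=\ms G/Z(\ms G)$, and the $\ms G$-orbits and $\Int(\ms G)$-orbits coincide, which yields the desired identification.

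Since $\ms F$ and $\ms G$ are sheaves of groups, the only point requiring care is that all of these identities are intended at the level of sheaves: the displayed equalities hold on sections over every object of $C$ and are compatible with the restriction maps, so they are genuine statements about morphisms of sheaves and therefore pass to the (sheafified) quotients used to form $\Hex(\ms F,\ms G)$. I expect no genuine obstacle beyond this bookkeeping, as the entire content of the lemma is carried by the two group-theoretic identities $\varphi\cdot f=\varphi(f)\cdot\varphi$ and the triviality of the $Z(\ms G)$-action.
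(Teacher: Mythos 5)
Your proof is correct and follows essentially the same route as the paper: the paper likewise shows that a double-coset relation $\varphi'(x)=g\varphi(fxf^{-1})g^{-1}$ collapses to the left-coset relation $\varphi'=(g\varphi(f))\cdot\varphi$ (your identity $\varphi\cdot f=\varphi(f)\cdot\varphi$ is exactly this with $g$ trivial), and then observes that $Z(\ms G)$ acts trivially to get the second isomorphism. Your added remark about checking the identities on local sections compatibly with restrictions is harmless bookkeeping that the paper leaves implicit.
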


\begin{proof}
    Clearly $\ms G\backslash \Hom(\ms F,\ms G)$ surjects onto $\ms G\backslash \Hom(\ms F,\ms G)/\ms F$. Let $\varphi,\varphi'\in\Hom(\ms F,\ms G)$ such that for some $f\in \ms F, g\in \ms G$, and all $x\in \ms F$, $\varphi'(x)= g\varphi(fxf^{-1}) g^{-1}$. Then 
    $$\varphi'(x)= (g\varphi(f))\varphi(x)(g\varphi(f))^{-1},$$
    so $\varphi$ and $\varphi'$ represent the same class in $\ms G\backslash \Hom(\ms F,\ms G)$. Finally, for $g\in Z(\ms G)$, and any $\varphi\in\Hom(\ms F,\ms G)$, $g\cdot \varphi=\varphi$, giving the second isomorphism. 
\end{proof}

\begin{rmk}
    In particular, given two sheaves of groups $\ms F$ and $\ms G$ with $\ms G$ abelian, $\Hex(\ms F,\ms G)\cong \Hom(\ms F,\ms G)$.
\end{rmk}

Define the category of pre-bands on $C$, $\Ba^{\mathrm{pre}}$, to be the category whose objects are sheaves of groups on $C$, and for $\ms F,\ms G\in \mathrm{Ob}(\Ba^{\mathrm{pre}})$, 
$$\Hom_{\Ba^{\mathrm{pre}}}(\ms F,\ms G)=\Hex(\ms F,\ms G).$$
We have a natural functor 
$$\Pi: \sGp\to \Ba^{\mathrm{pre}}$$
sending a sheaf of groups $\ms G$ to itself, and a morphism $\varphi:\ms F\to \ms G$ to its class in $\Hex(\ms F,\ms G)$. This functor is essentially surjective and full, but not faithful. We will write the image of $\ms G\in\sGp$ under $\Pi$ by $\underline{\ms G}$. Define the category of bands on $C$, $\Ba$, to be the stackification of the category of pre-bands on $C$. We have a functor
\begin{align}
    \Pi':\sGp\to \Ba
\end{align}
which is given by $\Pi$ composed with the stackification functor. This functor is no longer essentially surjective, but is locally essentially surjective. Bands can be constructed locally and satisfy decent, and a band can locally be lifted to a sheaf of groups.



With this discussion of bands, we can make the following definition. \begin{defn}
    Let $C$ be a site and $H$ a band on $C$. A \textit{gerbe banded by $H$ over $C$} is a gerbe over $C$, $\pi:\ms{G}\to C$ together with an isomorphism of bands on $C/\pi(y)$,
    $$\iota_y:H_{\pi(y)}\to \underline{\uAut_{\pi(y)}(y)},$$
    for every $y\in \ms{G}$ such that 
    \begin{enumerate}
    \item[(G3')] For every $Y\in C$ and isomorphism $\sigma: y\to y'$ in $\ms{G}(Y)$, the diagram  
    $$\xymatrix{ & H \ar[dl]_-{\iota_y} \ar[dr]^-{\iota_{y'}} & \\ \underline{\uAut_Y(y)} \ar[rr]^-{\sigma} & & \underline{\uAut_Y(y')}}$$
    commutes.
    \end{enumerate}
\end{defn}

\begin{defn}
    Let $A$ be a sheaf of abelian groups on $C$. A \textit{morphism of $A$-gerbes} $f:(\pi':\ms{G}'\to C, \{\iota_{x'}\})\to(\pi:\ms{G}\to C,\{\iota_x\})$ is a morphism of stacks $f:\ms{G}'\to \ms{G}$ such that for every $x'\in\ms{G}'$, the diagram
    $$\xymatrix{ & A \ar[dl]_-{\iota_{x'}} \ar[dr]^-{\iota_{f(x')}} & \\ \uAut_{\pi'(x')}(x') \ar[rr]^-{f_*} & & \uAut_{\pi(f(x'))}(f(x'))} $$
    commutes. Here $f_*$ is the natural map on morphisms in $\ms{G}'$ coming from the morphism of stacks. 
\end{defn}

\begin{defn}
    Let $u:A\to B$ be morphism of sheaves of abelian groups on $C$. A \textit{$u$-morphism of gerbes} $f:(\pi':\ms{G}'\to C, \{\iota_{x'}\})\to(\pi:\ms{G}\to C,\{\iota_x\})$ from an $A$-gerbe $\ms{G}'$ to a $B$-gerbe $\ms{G}$ is a morphism of stacks $f:\ms{G}'\to \ms{G}$ with morphism of stabilizers given by $u$, that is, such that for every $x'\in\ms{G}'$, the diagram 
    $$\xymatrix{A\ar[r]^-{u} \ar[d]_-{\iota_{x'}} & B\ar[d]^-{\iota_{f(x')}} \\ \uAut_{\pi'(x')}(x') \ar[r]^-{f_*} & \uAut_{\pi(f(x'))}(f(x'))}$$
    commutes.
\end{defn}

\begin{rmk} 
    We have analogous definitions for a morphism of gerbes banded by a  band $H$, and a $u$-morphism of gerbes where $u:G\to H$ is a morphism of bands, by requiring the diagrams to commute after passing the category of bands.
\end{rmk}

\begin{thm} \cite[IV.3.4.2]{Giraudbook}, \cite[12.2.8]{Olssonbook}
Let $A$ be a sheaf of abelian groups on a site $C$. We have a bijection
$$H^2(C,A)\to \{ \mathrm{isomorphism} \ \mathrm{classes} \ \mathrm{of} \ A\mathrm{-gerbes}\}.$$
\end{thm}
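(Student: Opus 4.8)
The plan is to exhibit the bijection by constructing explicit maps in both directions and checking they are mutually inverse. The crucial structural input is that $A$ is abelian: by the discussion preceding Definition 4.5, for an $A$-gerbe the identifications $\iota_y : A_{\pi(y)} \to \uAut_{\pi(y)}(y)$ are canonical and compatible with the conjugation isomorphisms $\Phi$, so any automorphism arising from gluing data can be transported unambiguously to a section of $A$. This is what makes the cohomology class well defined, and it is exactly where the nonabelian case would instead force a band and a nonabelian cohomology set.

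First I would build the map from gerbes to cohomology. Given an $A$-gerbe $\pi : \ms{G} \to C$, choose by (G1) a covering $\{U_i\}$ of the final object of $C$ with sections $s_i \in \ms{G}(U_i)$. By (G2), after refining, the restrictions of $s_i$ and $s_j$ to $U_{ij} = U_i \times U_j$ become isomorphic, so I fix isomorphisms $\varphi_{ij} : s_j|_{U_{ij}} \to s_i|_{U_{ij}}$. On triple overlaps the automorphism $\varphi_{ij}\varphi_{jk}\varphi_{ki}$ of $s_i|_{U_{ijk}}$ corresponds under $\iota_{s_i}$ to a section $a_{ijk} \in A(U_{ijk})$, and a direct computation using (G3) shows that $(a_{ijk})$ is a \v{C}ech $2$-cocycle whose class is independent, up to coboundary, of the choices of $s_i$ and $\varphi_{ij}$. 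This produces a well-defined class $[\ms{G}] \in H^2(C,A)$, and isomorphic $A$-gerbes clearly give the same class. For the reverse direction I would glue: given a $2$-cocycle $(a_{ijk})$ on a cover $\{U_i\}$, I form the neutral $A$-gerbes $B(A|_{U_i})$, take the tautological equivalences over the $U_{ij}$, and twist the associativity datum over $U_{ijk}$ by the automorphism $a_{ijk}$; the cocycle condition is precisely the coherence needed for this descent datum to glue to an $A$-gerbe $\ms{G}_a$ over $C$, with the trivial class corresponding to the neutral gerbe $BA$.

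The main obstacle is showing that \emph{every} class in $H^2(C,A)$ is hit, i.e.\ surjectivity, since for a general site the comparison map $\check{H}^2(C,A) \to H^2(C,A)$ from \v{C}ech to derived cohomology need not be surjective, so a single \v{C}ech $2$-cocycle may fail to represent an arbitrary class. I would resolve this by a dimension-shifting argument that sidesteps \v{C}ech entirely: embed $A$ into an injective abelian sheaf $I$ with quotient $Q = I/A$, so that the long exact sequence together with $H^1(C,I) = H^2(C,I) = 0$ yields an isomorphism $\delta : H^1(C,Q) \xrightarrow{\sim} H^2(C,A)$. Since $H^1(C,Q)$ classifies $Q$-torsors, it then suffices to match $Q$-torsors with $A$-gerbes: to a $Q$-torsor $P$ I associate its \emph{gerbe of liftings} $\ms{G}_P$, whose sections over $U$ are the $I$-torsors lifting $P|_U$ along $I \to Q$, an $A$-gerbe because the automorphisms of a lifting form a torsor under $\ker(I \to Q) = A$. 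Conversely every $A$-gerbe arises this way: pushing forward along $A \hookrightarrow I$ gives an $I$-gerbe, which is neutral since $H^2(C,I) = 0$, and a global object of it determines the required $Q$-torsor. Tracking $\delta$ through these constructions shows the two assignments are mutually inverse, completing the bijection.
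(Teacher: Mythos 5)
First, a point of comparison: the paper does not prove this statement at all --- it is quoted with attributions to Giraud (IV.3.4.2) and Olsson (12.2.8) --- so the only meaningful comparison is with the standard proofs in those references. Your final architecture (abandon naive \v{C}ech gluing; embed $A$ into an injective abelian sheaf $I$ with quotient $Q=I/A$; use the isomorphism $\delta\colon H^1(C,Q)\to H^2(C,A)$; match $Q$-torsors with $A$-gerbes via the gerbe of liftings) is essentially that standard route, and the gerbe-of-liftings construction is exactly what the paper itself invokes from Olsson's 12.2.5 in Example 8.2. The direction ``$Q$-torsor $\mapsto$ gerbe of liftings'' is correct as you describe it, since automorphisms of a lifting form a torsor under $\ker(I\to Q)=A$. (Your \v{C}ech construction in the second paragraph has, in addition to the surjectivity problem you flag, a well-definedness problem: the isomorphisms $\varphi_{ij}$ produced by (G2) exist only after refining over each $U_{ij}$, so the data $(a_{ijk})$ naturally lives on a hypercovering rather than on the covering $\{U_i\}$; but since you discard that route, this does not affect your final argument.)

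The genuine gap is a circularity at the heart of your surjectivity step. You assert that for an $A$-gerbe $\ms{G}$, the pushforward $I$-gerbe $\iota_*\ms{G}$ ``is neutral since $H^2(C,I)=0$.'' The inference ``trivial class in $H^2$ implies neutral gerbe'' is precisely an instance of the theorem being proved, applied to the sheaf $I$; at this stage of your argument no correspondence between $I$-gerbes and $H^2(C,I)$ has been established, so the vanishing of $H^2(C,I)$ says nothing yet about $I$-gerbes. What is needed is an independent proof that every gerbe banded by an injective abelian sheaf is neutral --- and this is where the real content of the theorem sits. It can be supplied as follows: restriction to a localized site $C/U$ preserves injectives (its left adjoint, extension by zero, is exact), so $H^1(U,I)=0$ for every object $U$, i.e.\ every $I$-torsor over every $U$ is trivial. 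Hence, after choosing objects $s_i$ of the $I$-gerbe over a covering $\{U_i\}$, the sheaves $\uIsom(s_j,s_i)$ over double overlaps are trivial $I$-torsors and admit global sections $\varphi_{ij}$; this is exactly what rescues an honest \v{C}ech $2$-cocycle $(a_{ijk})$, which fails to exist for a general band $A$. Moreover the \v{C}ech complex of $I$ on any covering is exact in positive degrees (apply $\Hom(-,I)$ to the exact complex of sheaves $\cdots\to\oplus\,\Z_{U_{ij}}\to\oplus\,\Z_{U_i}\to\Z_U\to 0$), so $(a_{ijk})$ is a coboundary; correcting the $\varphi_{ij}$ by the corresponding cochain makes them compatible on triple overlaps, and the stack axiom (effectivity of descent for objects) then glues the $s_i$ to a global object. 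With that lemma in place, your extraction of a $Q$-torsor from a global object of $\iota_*\ms{G}$, and the verification that the two assignments are mutually inverse, go through as outlined.
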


\begin{defn}
    Let $G$ be a band on $C$, and define $H^2(C,G)$ to be the set of isomorphism classes of gerbes banded by $G$ on $C$. 
\end{defn}

For a morphism of sheaves of abelian groups $A\to B$, we have a map 
$$H^2(C,A)\to H^2(C,B)$$ 
given by the usual cohomology map, but we would like to have a map like this for nonabelian gerbes too. Let $u:L\to M$ be a morphism of bands on $C$. Then we define a correspondence between $H^2(C,L)$ and $H^2(C,M)$ \cite[IV.3.1.4]{Giraudbook}, where $p\in H^2(C,L)$ corresponds to $q\in H^2(C,M)$ if there is a gerbe $P$ in the class of $p$ and $Q$ in the class of $q$ and a $u$-morphism $m:P\to Q$. It follows from the definition that the correspondence is transitive for two morphisms of bands $u:L\to M$, $v:M\to N$. 

The correspondence that Giraud defines does not always result in a map, but if $M$ is abelian, then we get a morphism 
$$u^{(2)}:H^2(C,L)\to H^2(C,M),$$
defined by the correspondence above \cite[IV.3.1.5, 3.1.8]{Giraudbook}.

\begin{prop} \cite[12.F(v)]{Olssonbook}
    Let $u:A\to B$ be a morphism of sheaves of abelian groups on a site $C$. Then the map $u^{(2)}: H^2(C,A)\to H^2(C,B)$ defined by the correspondence agrees with the cohomology morphism $u_*:H^2(C,A)\to H^2(C,B)$.
\end{prop}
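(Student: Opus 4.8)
The plan is to compare the two maps by evaluating them on Čech $2$-cocycles, using the explicit form of the bijection $H^2(C,A)\cong\{A\text{-gerbes}\}$ of \cite[IV.3.4.2]{Giraudbook}, \cite[12.2.8]{Olssonbook}. Both $u_*$ and $u^{(2)}$ are maps out of $H^2(C,A)$, so it suffices to show they send a given class to the same cocycle class in $H^2(C,B)$.

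First I would recall the cocycle description underlying the bijection. Given an $A$-gerbe $\pi:\ms G\to C$ with class $p\in H^2(C,A)$, property (G1) lets us choose a cover $\{Y_i\to Y\}$ with objects $x_i\in\ms G(Y_i)$; property (G2) lets us refine so that on each double overlap $Y_{ij}$ there is an isomorphism $\sigma_{ij}:x_j\to x_i$; and on each triple overlap $Y_{ijk}$ the automorphism $\sigma_{ij}\sigma_{jk}\sigma_{ki}$ of $x_i$ is, via $\iota_{x_i}$, an element $a_{ijk}\in A(Y_{ijk})$. The collection $(a_{ijk})$ is a $2$-cocycle whose class is $p$, and by definition the cohomology map $u_*$ sends $p$ to the class of the cocycle $(u(a_{ijk}))$.

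Next I would realize the Giraud correspondence concretely through a pushforward construction. Given $u:A\to B$, I would build a $B$-gerbe $u_*\ms G$ together with a canonical $u$-morphism $\ms G\to u_*\ms G$. Concretely, $u_*\ms G$ is the stackification of the prestack having the same objects as $\ms G$ over each $Y$, but with isomorphism sheaves enlarged by pushing the bands $\uAut_Y(x)\cong A_{\pi(x)}$ forward along $u$ to $B_{\pi(x)}$ (the contracted product of the $A$-torsor $\uIsom_{\ms G}(x,x')$ with $B$ over $A$). The identity-on-objects functor $\ms G\to u_*\ms G$ is a $u$-morphism in the sense defined above by construction. Since $B$ is abelian, the correspondence of Giraud is single-valued \cite[IV.3.1.5, 3.1.8]{Giraudbook}, so $u^{(2)}(p)=[u_*\ms G]$. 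Finally I would compute the cocycle of $u_*\ms G$: using the same cover together with the images of the $x_i$ and $\sigma_{ij}$, the triple-overlap automorphism is exactly the image of $a_{ijk}$ under $u$, so $[u_*\ms G]$ corresponds to the class of $(u(a_{ijk}))$. Hence both $u^{(2)}(p)$ and $u_*(p)$ equal the class of $(u(a_{ijk}))$, proving the proposition.

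The main obstacle is the middle step: giving a clean construction of the pushforward $B$-gerbe $u_*\ms G$ as the stackification of the contracted-product prestack, checking that the canonical functor is genuinely a $u$-morphism of gerbes, and confirming that its associated $2$-cocycle is precisely $(u(a_{ijk}))$. One must also invoke the abelianness of $B$ at the right moment, via \cite[IV.3.1.5, 3.1.8]{Giraudbook}, to guarantee that the Giraud correspondence is a well-defined map rather than a mere relation, so that the identification $u^{(2)}(p)=[u_*\ms G]$ is legitimate.
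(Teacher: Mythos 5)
The paper does not actually prove this proposition: it is imported verbatim from Olsson's book (the exercise cited as 12.F(v)) and used as a black box, so there is no argument in the paper to compare yours against line by line. Your sketch is the standard proof of this fact, and its overall structure is sound: represent the class of an $A$-gerbe by cocycle data $(x_i,\sigma_{ij},a_{ijk})$; realize Giraud's correspondence concretely by the pushforward gerbe $u_*\ms{G}$ (same objects, morphism sheaves the contracted products $\uIsom_{\ms{G}}(x,y)\wedge^{A}B$, then stackify); note that the identity-on-objects functor $\ms{G}\to u_*\ms{G}$ is a $u$-morphism, so that abelianness of $B$ (via \cite[IV.3.1.5, 3.1.8]{Giraudbook}) forces $u^{(2)}(p)=[u_*\ms{G}]$; and read off that the cocycle of $u_*\ms{G}$ is $(u(a_{ijk}))$, which is by definition a representative of $u_*(p)$. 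One technical point deserves more care than you give it: on a general site, axiom (G2) does not let you ``refine'' the cover $\{Y_i\to Y\}$ so that isomorphisms $\sigma_{ij}:x_j\to x_i$ exist on the double overlaps themselves; it only provides them after further localization over each $Y_i\times_Y Y_j$. Consequently the cocycle $(a_{ijk})$ naturally lives on a hypercover rather than a \v{C}ech cover, and you must either invoke the fact that hypercover cohomology computes $H^2$ (Verdier's hypercovering theorem) or restrict to a site where \v{C}ech cohomology suffices in degree $2$. Since the map induced by $u$ on (hyper)cocycles is still termwise application of $u$, and the comparison with derived-functor cohomology is natural in the coefficient sheaf, this repair leaves the rest of your argument intact; with that adjustment your proof is correct.
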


In accordance with Proposition 4.13, we will write $u_*$ for the correspondence $u^{(2)}$ whenever it is a morphism. 

\section{Rigidification}

In this section, we discuss rigidification following \cite[A]{Tame}, in which a subgroup of the inertia can be removed from certain stacks, so that we can realize these stacks as gerbes over their rigidifications. Let $S$ be a scheme or an algebraic space, and let $\ms X\to S$ be a locally finitely presented algebraic stack. Write $\ms I$ for the inertia stack $\ms I (\ms X)$. For an $S$-scheme $T$ and $t\in \ms X(T)$, $\uAut_T(t)= T\times_{\ms X}\ms{I}$ is a locally finitely presented group algebraic space.  
Furthermore, for a morphism $t'\to t$ over an $S$-morphism $T'\to T$, we have an isomorphism $\uAut_{T'}(t') \cong T'\times_T \uAut_T(t)$. 

Let $G\subseteq \ms{I}$ be a flat finitely presented subgroup stack. The pullback of $G$ to $T$ by $t$ is a flat finitely presented subgroup algebraic space $G_t\subseteq \uAut_T(t)$, and again $G_{t'}\cong T'\times_T G_t$ for any morphism $t'\to t$ over any $S$-morphism $T'\to T$. 

Conversely, given a subgroup algebraic space $G_t\subseteq \uAut_T(t)$ for each object $t:T\to \ms X$ which is flat and finitely presented over $T$ such that for each morphism $t'\to t$ over any $S$-morphism $T'\to T$, the isomorphism $\uAut_{T'}(t') \cong T'\times_T \uAut_T(t)$ carries $G_{t'}$ to $T'\times_T G_t$, we get a unique flat finitely presented subgroup stack $G\subseteq \ms I$ such that for any $t:T\to \ms X$, the pullback of $G$ to $T$ by $t$ is $G_t$. This condition implies that each subgroup $G_t$ is normal in $\uAut_T(t)$. Throughout this section, the automorphism groups $\uAut_T(t)$ will be viewed in the full inertia groups $\ms I_{\ms X/S}$ or $\ms I_{\ms Y/S}$ (see Remark 4.3).

\begin{thm} \cite[A.1]{Tame}
    Let $S$ be a scheme or algebraic space, and let $\ms X\to S$ be a locally finitely presented algebraic stack with inertia stack $\ms I$. Let $G$ be a flat finitely presented subgroup stack of $\ms I$. Then there exists a locally finitely presented algebraic stack $\ms X \thickslash G$ over $S$ and a morphism $\pi: \ms X\to \ms X\thickslash G$ such that $\ms X$ is an fppf-gerbe over $\ms X\thickslash G$ banded by $\underline{G}$ and for all $S$-schemes $T$ and $t\in \ms X(T)$, the natural morphism
    $$\uAut_T(t)\to \uAut_T(\pi(t))$$
    is surjective with kernel $G_t$.
    Furthermore, if $G$ is finite over $\ms X$, then $\pi$ is proper, and if $G$ is \'etale, then $\pi$ is \'etale. 
\end{thm}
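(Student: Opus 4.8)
The plan is to construct $\ms X\thickslash G$ by hand as a stackification and then read off each asserted property from the construction. First I would define an intermediate prestack $\ms X^{\natural}$, fibered in groupoids over $(\mathrm{Sch}/S)_{fppf}$, having \emph{the same objects} as $\ms X$ but with isomorphism sheaves the fppf quotients
$$\uIsom_{\ms X^{\natural}}(t,t') := \uIsom_T(t,t')/G_t,$$
where $G_t\subseteq \uAut_T(t)$ acts by precomposition. The hypothesis that $G$ is a (normal) subgroup stack of $\ms I$—so that each $G_t$ is normal in $\uAut_T(t)$ and is compatible under the isomorphisms $\uAut_{T'}(t')\cong T'\times_T\uAut_T(t)$—is exactly what is needed for the groupoid composition to descend to these quotients. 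The tautological functor on objects gives a morphism $\ms X\to \ms X^{\natural}$, and I would set $\ms X\thickslash G$ to be the fppf stackification of $\ms X^{\natural}$, with $\pi:\ms X\to\ms X\thickslash G$ the induced morphism. Here all $\uAut_T(t)$ are viewed in the relative inertia $\ms I_{\ms X/\ms X\thickslash G}$ in the sense of Remark 4.3.

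The main obstacle will be showing that $\ms X\thickslash G$ is an algebraic stack, locally of finite presentation over $S$. For the diagonal, the Isom sheaves of $\ms X\thickslash G$ are the fppf quotients $\uIsom_{\ms X}(t,t')/G$ of an algebraic space by the free action of the flat, finitely presented group algebraic space $G$; I would invoke the existence of such fppf quotients as algebraic spaces (Artin's theorem on quotients of algebraic spaces by flat finitely presented groupoids) to conclude the diagonal is representable by algebraic spaces, and finite presentation of $G$ over $\ms X$ gives local finite presentation. For an atlas, I would take a smooth surjection $U\to\ms X$ from a scheme and compose with $\pi$; once $\pi$ is known to be flat and fppf (which drops out of the gerbe analysis below), $U\to\ms X\thickslash G$ is smooth and surjective. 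This algebraicity step is the genuine technical heart, and is where I expect essentially all of the difficulty to lie.

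Next I would verify the gerbe structure and the automorphism sequence directly from the construction. Fppf-locally every object of $\ms X\thickslash G$ lifts to $\ms X$ (stackification), giving (G1), and two objects of $\ms X$ with the same image in $\ms X\thickslash G$ are fppf-locally isomorphic in $\ms X$ by definition of the quotient Isom sheaves, giving (G2); hence $\pi$ is an fppf gerbe. Crucially, the Isom presheaves of $\ms X^{\natural}$ are \emph{already} fppf sheaves, so stackification does not alter automorphism sheaves of objects pulled back from $\ms X$. Therefore
$$\uAut_T(\pi(t)) \cong \uAut_T(t)/G_t,$$
and the natural map $\uAut_T(t)\to\uAut_T(\pi(t))$ is the quotient map, which is surjective with kernel $G_t$ exactly as claimed. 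Consequently the relative inertia $\ms I_{\ms X/\ms X\thickslash G}$ has fiber $G_t$ over each $t$, so it equals $G$; normality makes the conjugation isomorphisms $\Phi$ preserve $G$ and satisfy (G3'), so the gerbe is banded by $\underline{G}$, the image of $G$ under $\Pi'$.

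Finally, for properness and étaleness I would check these fppf-locally on the target along the cover $\pi$ itself. Base-changing $\pi$ along $\pi$ yields $pr_2:\ms X\times_{\ms X\thickslash G}\ms X\to\ms X$, which is a neutral gerbe over $\ms X$ (the diagonal furnishes a global section) banded by $\underline{G}$, hence is identified with the relative classifying stack $B_{\ms X}G\to\ms X$. It then suffices to observe that $B_{\ms X}G\to\ms X$ is proper when $G$ is finite over $\ms X$—the section gives a finite surjective atlas $\ms X\to B_{\ms X}G$, so universal closedness and separatedness descend—and is étale when $G$ is étale, since then the relative diagonal is an open immersion (unramified) and the map is flat and locally of finite presentation. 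Since properness and étaleness are fppf-local on the base and $\pi$ is an fppf cover of $\ms X\thickslash G$, these properties descend to $\pi$, completing the argument.
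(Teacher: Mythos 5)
The paper does not actually prove this statement: it is quoted verbatim from \cite[A.1]{Tame}, and the only trace of its proof inside the paper is the sketch at the start of the proof of Proposition 5.3, where $(\ms X\thickslash G)^{\mathrm{pre}}$ is defined to have the objects of $\ms X$ and morphism sheaves $G_x\backslash\Hom_{\ms X}(x',x)$, with $\ms X\thickslash G$ its stackification. Your construction is exactly this one --- same prestack with the same objects, same quotient Isom sheaves (with normality of $G_t$ ensuring composition descends), same fppf stackification --- so your proposal reproduces the cited argument rather than offering a different route. Your verifications of the gerbe axioms, of the banding by $\underline{G}$, of the identification $\uAut_T(\pi(t))\cong \uAut_T(t)/G_t$ (using that the quotient Isom sheaves are already sheaves, so stackification does not alter them), and the descent argument for properness and \'etaleness via the base change $\ms X\times_{\ms X\thickslash G}\ms X\cong B_{\ms X}G$ are all sound.

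One step is stated incorrectly, though it is repairable by a standard citation. You claim that once $\pi$ is known to be fppf, the composite $U\to\ms X\to\ms X\thickslash G$ of a smooth atlas of $\ms X$ with $\pi$ is \emph{smooth} and surjective. This fails in precisely the situation the theorem is designed to cover: $G$ is only assumed flat, not smooth (think of $G=\mu_p$ in characteristic $p$), so the gerbe $\pi$ is flat and locally of finite presentation but not smooth, and the composite is only an fppf cover. To conclude algebraicity of $\ms X\thickslash G$ you must instead invoke the criterion that a stack over $(\mathrm{Sch}/S)_{fppf}$ whose diagonal is representable by algebraic spaces and which admits a surjective, flat, locally finitely presented cover by an algebraic space is algebraic (see \cite[06DC]{SP}); this theorem then produces some smooth atlas after the fact. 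With that substitution, together with your appeal to Artin's quotient theorem for representability of the quotient Isom sheaves, the algebraicity step closes and the remainder of your argument goes through.
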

\begin{rmk}
    The band $\underline{G}$ on $\ms X\thickslash G$ is locally given by the following procedure. For any section $t: T\to \ms X\thickslash G$, there exists a covering $s_i:T_i\to T$ such that $s_i^*t$ lifts to $\ms X$. We would like $\underline{G}$ on $s_i^*t$ to be given by the group $G$ on the lift of $s_i^*t$. These groups only glue together canonically up to conjugation on $\ms X\thickslash G$, so we get a band on $\ms X\thickslash G$. Note that the pullback of $\underline{G}$ by $\pi$ agrees with $\Pi'(G)=\underline{G}$, (4.6.1), as bands on $\ms X$.
\end{rmk}

\begin{prop}
    Let $S$ be a scheme or algebraic space, $\ms X\to S$ a locally finitely presented algebraic stack with inertia stack $\ms I$, and $G$ a flat finitely presented subgroup stack of $\ms I$. Suppose $\ms Y$ is an algebraic stack over $S$ with a morphism $p:\ms X\to \ms Y$ such that $\ms X$ is an fppf-gerbe over $\ms Y$  and for all $S$-schemes $T$ and $x\in \ms X(T)$, the natural morphism
    $$\uAut_T(x)\to \uAut_T(p(x))$$
    is surjective with kernel $G_x$. Then $\ms Y$ is equivalent to $\ms X\thickslash G$, and thus $p$ is banded by $\underline{G}$ and $\ms Y$ is locally finitely presented over $S$. 
\end{prop}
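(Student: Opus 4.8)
The plan is to produce a comparison morphism $q:\ms X\thickslash G\to\ms Y$ and show that it is an equivalence. By hypothesis $G_x=\ker\bigl(\uAut_T(x)\to\uAut_T(p(x))\bigr)$ for every $x\in\ms X(T)$, so $p:\ms X\to\ms Y$ sends $G$ into the relative automorphisms trivially. The rigidification $\pi:\ms X\to\ms X\thickslash G$ is initial among morphisms out of $\ms X$ that kill $G$ in this sense (the universal property of rigidification, \cite[A.1]{Tame}), so $p$ factors uniquely up to unique $2$-isomorphism as $p\cong q\circ\pi$ for a morphism $q:\ms X\thickslash G\to\ms Y$. Everything then reduces to showing $q$ is an equivalence: given that, $\pi$ is banded by $\underline G$ by Theorem 5.1, hence so is $p\cong q\circ\pi$, and $\ms Y\simeq\ms X\thickslash G$ is locally finitely presented over $S$.

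First I would check that $q$ is an fppf-epimorphism. Since $p$ is a gerbe, every object of $\ms Y$ lifts fppf-locally to an object of $\ms X$, and such a lift maps into the essential image of $q$ through $\pi$; thus $q$ is essentially surjective after fppf base change.

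The main step, and the hard part, is full faithfulness of $q$, which I would verify fppf-locally on the source. Given a scheme $T$ and objects $a,b\in(\ms X\thickslash G)(T)$, after an fppf cover of $T$ I can lift them to $\tilde a,\tilde b\in\ms X(T)$ with $\pi(\tilde a)\cong a$ and $\pi(\tilde b)\cong b$, using that $\pi$ is a gerbe. I would then compare Isom sheaves through the factorization
$$\uIsom_{\ms X}(\tilde a,\tilde b)\xrightarrow{\ \pi_*\ }\uIsom_{\ms X\thickslash G}(a,b)\xrightarrow{\ q_*\ }\uIsom_{\ms Y}(p\tilde a,p\tilde b),$$
whose composite is $p_*$. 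Because $\pi$ is a gerbe whose relative inertia is $G$, the map $\pi_*$ is surjective and exhibits its target as the quotient of $\uIsom_{\ms X}(\tilde a,\tilde b)$ by the free action of the relative automorphisms $G_{\tilde b}$ (equivalently $G_{\tilde a}$) by composition. By the hypothesis on $p$, the map $p_*$ is also surjective with the same fibers, since the relative inertia of $p$ is the same subgroup $G\subseteq\ms I$ acting in the same way; hence $p_*$ exhibits $\uIsom_{\ms Y}(p\tilde a,p\tilde b)$ as the identical quotient. Therefore $q_*$ is the induced map between two identical quotients of $\uIsom_{\ms X}(\tilde a,\tilde b)$, so it is an isomorphism. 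As a consistency check on automorphisms, an automorphism $\phi$ of $a$ with $q_*\phi=\mathrm{id}$ lifts to $\uAut_T(\tilde a)$ into $\ker p_*=G_{\tilde a}=\ker\pi_*$, forcing $\phi=\mathrm{id}$, so the relative inertia of $q$ is trivial.

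Finally, a morphism of stacks that is fully faithful and an fppf-epimorphism is an equivalence: full faithfulness identifies $\ms X\thickslash G$ with a full substack of $\ms Y$, and the epimorphism property together with descent for the stack $\ms Y$ forces this substack to be everything. This yields $\ms Y\simeq\ms X\thickslash G$ and finishes the argument. I expect the main obstacle to be the bookkeeping in the full-faithfulness step, specifically verifying that the two relative-automorphism actions on $\uIsom_{\ms X}(\tilde a,\tilde b)$ coincide; this is exactly where the assumption that $p$ has relative inertia the \emph{same} $G$, with the same inclusion into $\ms I$, is essential.
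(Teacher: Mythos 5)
Your proposal is correct, and its core coincides with the paper's: both arguments come down to producing a comparison morphism $\ms X\thickslash G\to\ms Y$ and checking that it is fully faithful (using the kernel hypothesis) and fppf-locally essentially surjective (using the gerbe hypothesis), with full faithfulness obtained by exhibiting both $\uIsom_{\ms X\thickslash G}(a,b)$ and $\uIsom_{\ms Y}(p\tilde a,p\tilde b)$ as the same quotient of $\uIsom_{\ms X}(\tilde a,\tilde b)$ by the $G$-action. The one structural difference is where the comparison morphism comes from. You invoke a universal property of rigidification, citing \cite[A.1]{Tame}; but the paper's Theorem 5.1, which is its quotation of that result, asserts only existence of $\pi:\ms X\to\ms X\thickslash G$ with the gerbe and kernel properties and has no universality clause. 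The paper therefore builds the map by hand: it forms the quotient prestack $(\ms X\thickslash G)^{\mathrm{pre}}$ with the objects of $\ms X$ and morphism sets $G_x\backslash\Hom_{\ms X}(x',x)$, uses the kernel hypothesis to descend $p$ to a morphism of prestacks $\phi$, and then obtains $\psi:\ms X\thickslash G\to\ms Y$ from the universal property of \emph{stackification}. Your route is mathematically sound — the universal property you want is a standard part of the rigidification package (it appears in the Abramovich--Corti--Vistoli formulation, and is exactly what the prestack-plus-stackification construction yields) — but as written it leans on an input that the results quoted in the paper do not supply, so you should either cite a source stating universality or reproduce the prestack factorization; this is a presentational rather than a mathematical gap. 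Two smaller points: surjectivity of $p_*$ on Isom sheaves requires the gerbe axiom for $p$ (fppf-local liftability of isomorphisms of $\ms Y$ to isomorphisms in $\ms X$), not merely surjectivity of $\uAut_T(x)\to\uAut_T(p(x))$ — your phrase ``by the hypothesis on $p$'' conflates these, though both hypotheses are indeed available; and in the final step the effectivity of descent you need is in the source $\ms X\thickslash G$ (an algebraic stack, hence a stack for the fppf topology), not just ``descent for $\ms Y$.'' Neither affects correctness.
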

\begin{proof}
      As in the proof of Theorem 5.1 \cite[A.1]{Tame}, define $(\ms{X}\thickslash G) ^{\mathrm{pre}}$ to be the stack with objects of $\ms{X}$, and for any $S$-scheme $T$, and $x, x'\in \ms{X}(T)$, 
      $$\Hom_{(\ms{X}\thickslash G) ^{\mathrm{pre}}} (x',x)=G_x\backslash\Hom_\ms{X}(x',x).$$
      Let $\pi:\ms{X}\to (\ms{X}\thickslash G) ^{\mathrm{pre}}$ be the natural morphism of prestacks which is the identity map on objects, and is the natural quotient map on morphisms. We define a morphism of prestacks $\phi:(\ms{X}\thickslash G)^{\mathrm{pre}}\to \ms{Y}$, which on objects is given by $x\mapsto p(x)$. For morphisms, we have 
      \begin{equation}
      \Hom_{\ms X}(x',x)\to \Hom_{\ms Y}(p(x'),p(x))
      \end{equation}
      defined by the morphism of stacks $p:\ms X\to \ms Y$, which we would like to show descends to a morphism from the quotient $G_x\backslash\Hom_\ms{X}(x',x)$.

      If $\Hom_{\ms{X}}(x',x)$ is nonempty, then $\Hom_{\ms Y} (p(x'),p(x))$ is also nonempty, and we have $\Hom_{\ms X}(x',x)\cong \Aut(x)$, and $\Hom_{\ms Y} (p(x'),p(x))\cong \Aut(p(x)) $. By assumption, the morphism $\uAut_T(x)\to \uAut_T(p(x))$ is surjective with kernel $G_x$, so the map (5.3.1) indeed descends to the quotient by $G_x$, which defines
       $$\Hom_{(\ms{X}\thickslash G) ^{\mathrm{pre}}} (x',x)\to \Hom_{\ms Y}(p(x'),p(x)).$$
       Moreover, this is an isomorphism.

       We also have a natural morphism $$\iota:(\ms{X}\thickslash G) ^{\mathrm{pre}}\to \ms{X}\thickslash G, $$
       the stackification morphism, and thus we get a unique morphism $\psi: \ms{X}\thickslash G \to \ms{Y}$ making the diagram of prestacks
       $$\xymatrix{ \ms{X} \ar[r]^-{p} \ar[d]_-{\pi} & \ms Y \\ (\ms X \thickslash G)^{\mathrm{pre}} \ar[ur]^{\phi} \ar[r]^-{\iota} & \ms X \thickslash G \ar[u]_-{\psi} }$$
       commute. We show that $\psi:\ms{X}\thickslash G \to \ms Y$ is an isomorphism of stacks.

       First, we show that $\psi$ is fully faithful, so we check that we have an isomorphism of sheaves
       $$\uIsom_{\ms{X} \thickslash G} (x',x)\cong \uIsom_{\ms Y} (p(x'),p(x)),$$
       which we can check locally on stalks, using the prestack $(\ms X\thickslash G)^{\mathrm{pre}}$. 
       Since 
       $$\uIsom_{\ms X\thickslash G}(x',x)= (\uIsom_{(\ms X\thickslash G) ^{\mathrm{pre}}}(x',x))^{\mathrm{sh}},$$
       where sh denotes sheafification, and $\Isom_{(\ms X\thickslash G)^{\mathrm{pre}}}(x',x)\cong \Isom_{\ms Y}(p(x'),p(x))$,
       we have the required isomorphism. 

       For essential surjectivity, since $p:\ms X\to \ms Y$ is a gerbe, for all $S$-schemes $T$, and $y\in \ms Y(T)$, there exists a cover $\{f_i:T_i\to T\}$ such that $f_i^*y\in \ms Y(T_i)$ lifts to $\ms X(T_i)$. The image of this lift in $\ms X\thickslash G$ maps to $f_i^*y$ under $\psi$. 
\end{proof}

\begin{prop}
    Let $S$ be a scheme or algebraic space, $\ms Y$ a locally finitely presented algebraic stack over $S$, $G$ a flat finitely presented band on $\ms Y$, and $p:\ms X\to \ms Y$ a gerbe banded by $G$. Then $\ms X$ is locally finitely presented over $S$ and there exists a flat finitely presented sheaf of groups $G'\subseteq \ms I(\ms X)$ such that $\ms X \thickslash G'\cong \ms Y$, and $\underline{G'}\cong G$ is an isomorphism of bands on $\ms Y$.
\end{prop}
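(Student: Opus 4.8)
The plan is to prove this as the converse of Proposition 5.3, taking $G'$ to be the relative inertia $\ms I_{\ms X/\ms Y}$. First I would check that $\ms X$ is an algebraic stack, locally finitely presented over $S$, as required to invoke that proposition. Since $p$ is a gerbe banded by the flat finitely presented band $G$, it is fppf-locally trivial over $\ms Y$: on a cover $U\to\ms Y$ over which $p$ admits a section, a band can locally be lifted to a sheaf of groups, and $\ms X\times_{\ms Y}U$ is equivalent to the classifying stack $B_U G_0$ of a flat finitely presented sheaf of groups $G_0$ lifting $G$. As algebraicity, flatness, and local finite presentation all descend along the fppf cover $U\to\ms Y$, and $\ms Y$ is locally finitely presented over $S$, it follows that $\ms X$ is algebraic and locally finitely presented over $S$, and that $p$ is an fppf-gerbe.

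Next I would set $G':=\ms I_{\ms X/\ms Y}\subseteq\ms I(\ms X)=\ms I_{\ms X/S}$, the relative inertia, which is a normal subgroup sheaf of groups since it is the kernel of the natural map $\ms I_{\ms X/S}\to\ms X\times_{\ms Y}\ms I_{\ms Y/S}$; under the finiteness hypotheses the inertia is representable, so $G'$ is genuinely a sheaf of groups on $\ms X$ and not merely a subgroup stack. For $x\in\ms X(T)$ with $y=p(x)$, the fiber $G'_x$ is exactly $\ker(\uAut_T(x)\to\uAut_T(y))$, the relative automorphism group in the sense of Remark 4.3. The banding datum of Definition 4.7 supplies an isomorphism of bands $\underline{G'_x}\cong G_y$, and since $G$ is flat and finitely presented this shows $G'$ is flat and finitely presented over $\ms X$.

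I would then verify the two remaining hypotheses of Proposition 5.3 for this $G'$. The kernel condition is immediate from the definition of relative inertia. For surjectivity of $\uAut_T(x)\to\uAut_T(y)$, given $\beta\in\uAut_T(y)$ I would take a cartesian lift $c:\beta^*x\to x$ over $\beta$ in the fibered category $p:\ms X\to\ms Y$; since $\beta^*x$ and $x$ both lie over $y$ and $p$ is a gerbe, condition (G2) of Definition 4.1 produces, locally on $T$, an isomorphism $\gamma:\beta^*x\to x$ over $\mathrm{id}_y$, whence $c\circ\gamma^{-1}\in\uAut_T(x)$ maps to $\beta$. Thus the map is an epimorphism of sheaves with kernel $G'_x$, and Proposition 5.3 applies to give $\ms X\thickslash G'\cong\ms Y$ with $p$ banded by $\underline{G'}$.

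Finally, to obtain $\underline{G'}\cong G$ as bands on $\ms Y$, I would observe that $p$ is now banded both by $G$ (by hypothesis) and by $\underline{G'}$ (from Proposition 5.3); since the band of a gerbe is determined up to isomorphism by its relative automorphism sheaves, these agree. Concretely, the banding isomorphisms $\iota_y\colon G_{p(y)}\to\underline{\uAut_{p(y)}(y)}=\underline{G'}_{p(y)}$ assemble into the desired isomorphism of bands on $\ms Y$. I expect the main obstacle to be the bookkeeping in the middle two steps: verifying that the relative inertia is flat and finitely presented and that the automorphism map is surjective, both of which reduce to working over an fppf cover trivializing the gerbe, together with the care needed to distinguish the sheaf of groups $G'$ on $\ms X$ from the band it induces on $\ms Y$.
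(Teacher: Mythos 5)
Your proposal is correct, and its overall architecture coincides with the paper's: both take $G'$ to be the relative inertia $\ms I_{\ms X/\ms Y}$, i.e.\ the kernel of $\ms I(\ms X)\to \ms I(\ms Y)\times_{\ms Y}\ms X$, both verify the two hypotheses of Proposition 5.3 (that $G'_x$ is the kernel of $\uAut_T(x)\to\uAut_T(p(x))$ and that this map is surjective), and both then read off $\underline{G'}\cong G$ from the fact that $p$ is thereby banded by $\underline{G'}$ as well as by $G$. Where you genuinely diverge is in the verification of surjectivity and of flatness/finite presentation of $G'$. The paper reduces to geometric points and invokes its Lemma 5.5 (over an algebraically closed field, $BG\to BH$ is a gerbe if and only if $G\to H$ is surjective, in which case it is banded by the kernel), using that $p$ is locally of the form $B\Aut(x)\to B\Aut(p(x))$; this same lemma is then reused to identify $G'_x$ with the group underlying the band, giving flatness and finite presentation. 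You instead argue directly at the level of an arbitrary $T$-point: a local lift of $\beta\in\uAut_T(y)$ is produced as $c\circ\gamma^{-1}$, with $c$ a cartesian lift of $\beta$ and $\gamma$ furnished by axiom (G2), and flatness/finite presentation of $G'$ comes straight from the banding isomorphism $\underline{G'_x}\cong G_{p(x)}$. Your route avoids Lemma 5.5 entirely and establishes surjectivity as a map of sheaves without any pointwise reduction, which is arguably cleaner and matches the generality of the statement; the paper's route makes the local classifying-stack picture explicit, which is what lets it phrase everything in terms of finite groups at geometric points. One point of care in your version: when invoking (G2), you must view $\ms X$ as a stack over the site of $\ms Y$ (Remark 4.3), so that the isomorphism $\gamma$ it produces lies over $\mathrm{id}_y$, as your composition $c\circ\gamma^{-1}$ requires; with that reading, your argument is complete.
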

\begin{proof}
    To check that $\ms X$ is locally finitely presented over $S$, we may assume that $p:B_{\ms Y} G\to \ms Y$ is the trivial gerbe, so has a section $s: \ms Y\to B_{\ms Y} G$, which is a covering of $B_{\ms Y} G$. Consider the fiber product diagram
    $$\xymatrix{G\ar[r] \ar[d] & \ms Y \ar[d]\\ \ms Y \ar[r] & B_{\ms Y} G }.$$
    Then since $G\to \ms Y$ is finitely presented, $\ms Y\to B_{\ms Y} G$ is locally finitely presented, so $B_{\ms Y}G$ has a locally finitely presented covering by a locally finitely presented algebraic stack, so $B_{\ms Y}G$ is also a locally finitely presented algebraic stack. 

    Consider the commutative diagram
    $$\xymatrix{\ms I (\ms X) \ar[r] \ar[d] & \ms I (\ms Y) \ar[d] \\ \ms X \ar[r]^-{p} & \ms Y },$$
    which maps $(x,g)\in \ms I (\ms X)$ to $x\in \ms X$ and similarly for $\ms I(\ms Y) \to \ms Y$, and sends $(x,g)\in \ms I (\ms X)$ to $(p(x), p(g))\in \ms I (\ms Y) $. Here, we are using the characterization of the inertia stack of $\ms X$ as having objects pairs $(x,g)$ with $x\in\ms X$ and $g\in \Aut(x)$ \cite[8.1.18]{Olssonbook}. This gives us a morphism of algebraic stacks 
    $$\ms I (\ms X) \to \ms I (\ms Y) \times_{\ms Y} \ms X,$$ where
    $$(x,g)\mapsto((p(x), p(g)), x, \mathrm{id}: p(x)\to p(x)).$$ 
    We define a sheaf of groups $G'\subseteq \ms I (\ms X)$ to be the kernel of this morphism. The zero section of the inertia stack consists of elements $(x,e)$, where $e$ is the identity automorphism of $x$. The zero section of the stack $\ms I (\ms Y) \times_{\ms Y} \ms X$ consists of elements $((y,e),x,\sigma: y\to p(x))$. Thus, the kernel $G'$ consists of elements $(x,g)$ such that $p(g)$ is the identity automorphism of $p(x)$. 

    Recall that for an $S$-scheme $T$ and a $T$-point $x\in \ms X(T)$, we have a cartesian diagram 
    $$\xymatrix{\uAut_T(x) \ar[r] \ar[d] & \ms I(\ms X) \ar[d] \\ T\ar[r]^-{x} & \ms{X} },$$
    which can be seen by the fact that every element of the fiber product $((x',g),\sigma:x'\cong x)$ is isomorphic to $((x,g'),\mathrm{id}:x\cong x)$ via the isomorphism $\sigma$, where $g'=\sigma^{-1}g\sigma$. 

    Similarly, we have a cartesian diagram 
    $$\xymatrix{\uAut_T(p(x)) \ar[r] \ar[d] & \ms I(\ms Y) \times_{\ms Y} \ms X \ar[d]\\ T \ar[r]^-{x} & \ms X },$$

    which can be seen by the fact the every element of the fiber product 
    $$((y,h),x',\sigma: y\cong p(x'), \epsilon: x'\cong x)$$ is isomorphic to 
    $$((p(x),h'), x, \mathrm{id}:p(x)\cong p(x), \mathrm{id}:x\cong x)$$
    via the isomorphism $(p(\epsilon)\circ \sigma, \epsilon)$, where $h'= p(\epsilon)\sigma h (p(\epsilon)\sigma)^{-1}$. 

    Thus, we have the following diagram with two cartesian squares.
    $$\xymatrix{\uAut_T(x)\ar[rr] \ar[dr] \ar[dd] && \ms I (\ms X) \ar[dd] \ar[dr] & \\ & \uAut_T(p(x)) \ar[rr] \ar[dl] && \ms I (\ms Y) \times_{\ms Y} \ms X \ar[dl] \\ T \ar[rr]^-{x} && \ms X &}$$

    Then $G'_x$ is the kernel of the morphism 
    $\uAut_T(x)\to \uAut_T(p(x)).$
    We can check that this morphism is surjective locally, on geometric points. For a geometric point $x:\Spec k\to \ms X$, the following lemma tells us that $\Aut(x)\to \Aut(p(x))$ is surjective if and only if $B\Aut(x)\to B\Aut(p(x))$ is a gerbe, which is true as $p:\ms X\to \ms Y$ is locally $B\Aut(x)\to B\Aut(p(x))$.

    \begin{lem}
        Let $\alpha: G\to H$ be a morphism of groups. The induced map $BG\to BH$, with $BG=[\Spec k/G]$, $BH=[\Spec k/H]$, and $k$ an algebraically closed field, is a gerbe if and only if $\alpha$ is surjective.
    \end{lem}
    \begin{proof}
        Let $x:\Spec k\to BH$ be the covering corresponding to the trivial $H$-torsor. Then the fiber product $\ms P$ of the diagram
        $$\xymatrix{\ms P \ar[r] \ar[d] & BG \ar[d] \\ \Spec k \ar[r]^-{x} & BH}$$
        is the stack with objects pairs $(P,\sigma)$ where $P$ is a $G$-torsor and $\sigma:\alpha_*P\cong H$ is an isomorphism of $H$-torsors. Two objects $(P_1,\sigma_1)$ and $(P_2, \sigma_2)$ are isomorphic if there exists an isomorphism of $G$-torsors $\lambda:P_1\cong P_2$ such that the diagram 
        $$\xymatrix{\alpha_*P_1 \ar[rr]^-{\alpha_*\lambda} \ar[dr]_-{\sigma_1} && \alpha_* P_2 \ar[dl]^-{\sigma_2} \\ & H & }$$
        commutes. 

        If $\alpha$ is surjective, then $BG\to BH$ is a $\ker(\alpha)$-gerbe. Conversely, if $BG\to BH$ is a gerbe, then $\ms P$ must consist of only one isomorphism class of objects, that is, any two objects of $\ms P$ are isomorphic. Let $(G, \mathrm{id}:\alpha_*G=H\cong H)$ and $(G, \cdot h: \alpha_*G=H\cong H)$ be two objects of $\ms P$. Any element $g$ of $G$ gives an isomorphism $\lambda=\cdot g:G\cong G$, but the diagram
        $$\xymatrix{\alpha_*G\ar[rr]^-{\alpha_*(\lambda)=\cdot \alpha(g)} \ar[dr]_-{\mathrm{id}} && \alpha_*G\ar[dl]^-{\cdot h} \\ & H & }$$
        commutes if and only if $\alpha(g)=h^{-1}$. Thus, there is only one isomorphism class if and only if $\alpha$ is surjective. 
    \end{proof}

    Furthermore, the lemma tells us that $B\Aut(x)\to B\Aut(p(x))$ is a gerbe banded by $\ker(\Aut(x)\to \Aut(p(x)))=G'_x$, so each $G'_x$ agrees with the group coming from the constant band $G_x$, and thus also $G'$ is flat and finitely presented. Then by Proposition 5.3, $\ms Y \cong \ms X\thickslash G'$, so that $p:\ms X\to \ms Y$ is a gerbe banded by $\underline{G'}$, and thus $\underline{G'}=G$. 
\end{proof}

\begin{cor}
    Let $S$ be a scheme or algebraic space, and let $\ms X\to S$ be a locally finitely presented algebraic stack with inertia stack $\ms I$. Let $H\subseteq G$ be flat finitely presented subgroup stacks of $\ms I$. Then $G/H$ is a flat finitely presented subgroup stack of $\ms X\thickslash H$, and $(\ms X \thickslash H)\thickslash (G/H)\cong \ms X\thickslash G$, and we have a commutative diagram
    $$\xymatrix{\ms X \ar[r]^-{\pi_H} \ar[dr]_-{\pi_G} & \ms X\thickslash H \ar[d]^-{\pi_{G/H}} \\ & \ms X\thickslash G}.$$
\end{cor}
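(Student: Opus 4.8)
The plan is to realize $\ms X\thickslash G$ as a rigidification of $\ms X\thickslash H$ by verifying the recognition criterion of Proposition 5.3. Write $\pi_H:\ms X\to\ms X\thickslash H$ and $\pi_G:\ms X\to\ms X\thickslash G$ for the rigidification maps of Theorem 5.1, which are fppf gerbes, and for an $S$-scheme $T$ and $x\in\ms X(T)$ put $\bar x=\pi_H(x)$. First I would construct the map $\pi_{G/H}:\ms X\thickslash H\to\ms X\thickslash G$ and the triangle using the prestack description recalled in the proof of Proposition 5.3. Since $H\subseteq G$ gives $H_x\subseteq G_x$ for every object $x$, the surjections $H_x\backslash\Hom_{\ms X}(x',x)\twoheadrightarrow G_x\backslash\Hom_{\ms X}(x',x)$ assemble into a morphism of prestacks $(\ms X\thickslash H)^{\mathrm{pre}}\to(\ms X\thickslash G)^{\mathrm{pre}}$ that is the identity on objects; stackifying yields $\pi_{G/H}$. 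As every map here is the identity on objects and compatible on morphisms, the relation $\pi_G=\pi_{G/H}\circ\pi_H$ holds already at the level of prestacks, which gives the desired commutative triangle.

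Next I would check that $\pi_{G/H}$ is an fppf gerbe, where the gerbe properties of $\pi_G$ and $\pi_H$ do the work. For (G1): any object of $\ms X\thickslash G$ over $T$ lifts, after passing to a cover, to some $x\in\ms X$ because $\pi_G$ is a gerbe, and then $\bar x$ is a local lift to $\ms X\thickslash H$ with $\pi_{G/H}(\bar x)\cong\pi_G(x)$. For (G2): if $\bar x_1,\bar x_2$ have isomorphic images under $\pi_{G/H}$, then after lifting to $x_1,x_2\in\ms X$ we have $\pi_G(x_1)\cong\pi_G(x_2)$, so $x_1$ and $x_2$ are locally isomorphic in $\ms X$ by (G2) for $\pi_G$, and applying $\pi_H$ shows $\bar x_1\cong\bar x_2$ locally.

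I would then define the subgroup $G/H\subseteq\ms I(\ms X\thickslash H)$ and compute the automorphism kernels. For a local lift $x$ of $\bar x$, the two surjections of Theorem 5.1, namely $\uAut_T(x)\twoheadrightarrow\uAut_T(\bar x)$ with kernel $H_x$ and $\uAut_T(x)\twoheadrightarrow\uAut_T(\pi_G x)$ with kernel $G_x$, combine (using $H_x\subseteq G_x$) into a surjection $\uAut_T(\bar x)\twoheadrightarrow\uAut_T(\pi_{G/H}\bar x)$ whose kernel is the image $G_x/H_x$ of $G_x$. I set $(G/H)_{\bar x}:=G_x/H_x$. That this is independent of the lift is precisely the conjugation-invariance of the subgroup stacks $G,H\subseteq\ms I(\ms X)$: two local lifts differ by an isomorphism in $\ms X$, whose induced conjugation carries $G_x,H_x$ to the corresponding subgroups and hence fixes their (normal) image in $\uAut_T(\bar x)$. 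The family $\{(G/H)_{\bar x}\}$ satisfies the pullback compatibility of the subgroup-stack criterion stated before Theorem 5.1 (checked after lifting to $\ms X$), so it descends to a subgroup stack of $\ms I(\ms X\thickslash H)$, and it is flat and finitely presented because each $G_x/H_x$ is the fppf quotient of the flat finitely presented group $G_x$ by the flat finitely presented normal subgroup $H_x$.

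Finally, $\pi_{G/H}$ is an fppf gerbe whose automorphism maps are surjective with kernel $(G/H)_{\bar x}$, so Proposition 5.3, applied with $\ms X\thickslash H$ in the role of $\ms X$ and $\ms X\thickslash G$ in the role of $\ms Y$, yields $(\ms X\thickslash H)\thickslash(G/H)\cong\ms X\thickslash G$ and identifies $\pi_{G/H}$ with the rigidification map, completing the triangle. The main obstacle is the well-definedness of $G/H$ as a flat finitely presented subgroup stack: one must confirm that the image $G_x/H_x$ is independent of the chosen local lift (via conjugation-invariance of subgroup stacks of the inertia) and that forming the fppf quotient of $G_x$ by the normal subgroup $H_x$ preserves flatness and finite presentation.
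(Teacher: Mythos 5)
Your proposal is correct, but it runs the recognition argument in the opposite direction from the paper, and the comparison is worth recording. Both proofs construct the same subgroup stack $G/H \subseteq \ms I(\ms X\thickslash H)$ by taking the pointwise images $G_t/H_t \subseteq \uAut_T(\pi_H(t))$, checking lift-independence and pullback-compatibility only on objects coming from $\ms X$, and both ultimately invoke Proposition 5.3. The difference is where Proposition 5.3 is applied. The paper first feeds $G/H$ into Theorem 5.1 to manufacture the stack $(\ms X\thickslash H)\thickslash(G/H)$ together with its canonical map $\pi_{G/H}$, and then applies Proposition 5.3 to the \emph{composite} $\pi_{G/H}\circ\pi_H : \ms X \to (\ms X\thickslash H)\thickslash(G/H)$, checking that a composition of gerbes is a gerbe and that the composed automorphism maps are surjective with kernel $G_t$; this identifies the double rigidification with $\ms X\thickslash G$ and yields the triangle. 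You instead build the comparison map $\pi_{G/H}:\ms X\thickslash H \to \ms X\thickslash G$ by hand at the prestack level (using $H_x\subseteq G_x$ to refine the quotient Hom-sets), verify the gerbe axioms and compute the kernels $(G/H)_{\bar x}=G_x/H_x$ directly, and then apply Proposition 5.3 to this map, recognizing $\ms X\thickslash G$ as $(\ms X\thickslash H)\thickslash(G/H)$. What the paper's route buys is that the existence of $\pi_{G/H}$ and its gerbe property come for free from Theorem 5.1, at the cost of the composite-gerbe verification; what your route buys is that the commutative triangle holds on the nose already at the prestack level and no composition of gerbes is needed, at the cost of the explicit prestack construction and the direct gerbe check. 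A further small point in your favor: your justification that $G_x/H_x$ is flat and finitely presented (as an fppf quotient of group algebraic spaces by a flat finitely presented normal subgroup) is cleaner than the paper's module-level remark, which leans on the dubious-sounding claim that any quotient of a flat module is flat.
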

\begin{proof}
    By Theorem 5.1, we get two locally finitely presented algebraic stacks $\ms X\thickslash G$ and $\ms X \thickslash H$ and morphisms $\pi_G:\ms X \to \ms X \thickslash G$ and $\pi_H:\ms X \to \ms X \thickslash H$ such that $\pi_G$ and $\pi_H$ are gerbes banded by $\underline{G}$ and $\underline{H}$ respectively, and for all $S$-schemes $T$ and $T$-points $t\in \ms X(T)$, $\uAut_T(t)/G_t\cong \uAut_T(\pi_G(t))$ and $\uAut_T(t)/H_t\cong \uAut_T(\pi_H(t))$. 
    
    Since $G_t$ and $H_t$ are normal flat finitely presented subgroup algebraic spaces of $\uAut_T(t)$ over $T$, $G_t/H_t$ is a flat finitely presented subgroup algebraic space of $\uAut_T(t)/H_t\\ \cong \uAut_T(\pi_H(t))$ over $T$. Indeed, we can check that $G_t/H_t$ is flat and finitely presented locally, and the cokernel of a morphism of finitely presented modules is finitely presented, and any quotient of a flat module is flat.
    
    Moreover, the construction of the $G_t/H_t$ satisfies the condition that for any morphism $t'\to t$ over an $S$-morphism $T'\to T$, the isomorphism
    $$\uAut_{T'}(\pi_H(t'))\cong T'\times_T \uAut_T(\pi_H(t))$$
    carries $G_{t'}/H_{t'}$ to $T'\times_T G_t/H_t$. Note that we only need to check this condition and that $G_t/H_t$ is flat finitely presented on points $t$ of $\ms X$ because every point and morphism of $\ms X \thickslash H$ locally lifts to $\ms X$. Thus, we get a unique flat finitely presented subgroup stack of $\ms I(\ms X \thickslash H)$ which must be $G/H$. Thus, again by Theorem 5.1, we have a locally finitely presented algebraic stack $(\ms X \thickslash H)\thickslash (G/H)$ over $S$ and a morphism $\pi_{G/H}: \ms X \thickslash H \to (\ms X \thickslash H)\thickslash (G/H)$ which is an fppf-gerbe banded by $\underline{G/H}$, such that for every $S$-scheme $T$ and $s\in (\ms X\thickslash H)(T)$, the morphism $\uAut_T(s)\to \uAut_T(\pi_{G/H}(s))$ is surjective with kernel $(G/H)_s$.

    Clearly, the composition 
    $$\pi_{G/H}\circ \pi_H: \ms X\to \ms X \thickslash H \to (\ms X\thickslash H)\thickslash (G/H)$$
    is an fppf-gerbe, and the composition
    $$\uAut_T(t)\to \uAut_T(\pi_H(t))\to \uAut_T(\pi_{G/H}(\pi_H(t)))$$
    is surjective with kernel $G_t$. Therefore, by Proposition 5.3, $(\ms X \thickslash H)\thickslash (G/H)\cong \ms X\thickslash G$.
\end{proof}

\section{Picard groups of stacky curves: the gerbe case}

In this section, we prove Theorem 1.2. As remarked in the introduction, Theorem 1.2 holds more generally than stated, so let $S$ be a locally noetherian scheme, and let $\ms{Y}$ be a locally finitely presented algebraic stack over $S$. Let $\ms{H}$ be a tame locally constant finitely presented band on $\ms{Y}$. Let $\pi:\ms{X}\to \ms{Y}$ be a gerbe banded by $\ms{H}$, and let $[\ms{X}]\in H^2(\ms{Y},\ms H)$ be the associated cohomology class. 

We define a map $\chi: \Pic (\ms X)\to \Hom(\ms H, \mathbb{G}_m)$ as follows. 
Let $L$ be a line bundle on $\ms{X}$. For each $S$-scheme $T$ and $T$-point $t:T\to\ms{X}$, we get a line bundle $t^*L$ on $T$, and an isomorphism $\chi_L(a):t^*L\to t^*L$ for each $a\in \uAut_T(t)$, now viewing $\uAut_T(t)$ inside $\ms I_{\ms X/\ms Y}$, see Remark 4.3. Since $\ms{X}$ is a gerbe banded by $\ms H$, $\ms H_T\cong \underline{\uAut_{T}(t)}$ as bands. Thus, the line bundle $L$ defines a character $(\chi_L: \underline{\uAut_{T}(t)}\to \mathrm{GL}(t^*L))\in \Hom(\ms H_T, \mathbb{G}_m)$. 
As described in Section 4, these morphisms glue together only as morphisms of bands, and we get a morphism of bands $\ms H\to \mathbb{G}_m$. 

Note that for two line bundles $L$ and $M$ on $\ms{G}$, the character corresponding to $L\otimes M$ is 
$$\chi_{L\otimes M}(a):L\otimes M\to L\otimes M$$
by $l\otimes m\mapsto \chi_L(a)(l)\otimes \chi_M(a)(m)=\chi_L\cdot \chi_M (a)(l\otimes m)$.
Then we have a homomorphism $\chi:\Pic(\ms{G})\to \Hom(\ms H,\mathbb{G}_m)$.

Next, we define a map $(-)_*[\ms X]: \Hom(\ms H, \mathbb{G}_m)\to H^2(\ms Y, \mathbb{G}_m)$ by 
$$(\epsilon: \ms H \to \mathbb{G}_m)\mapsto \epsilon_*([\ms X]),$$
as described in Section 4, with $\epsilon_*$ given by Giraud's correspondence relation $\epsilon^{(2)}$ \cite[IV.3.1.4]{Giraudbook}. We will describe this map more explicitly here. 
Let $\epsilon: \ms H\to \mathbb{G}_m$ be a morphism of bands, and let $\phi: \ms H \to \ms H^{\mathrm{ab}}$ be the morphism of bands corresponding the abelianization morphism of groups (see Remark 6.1 below). Recall that since $\ms H^{\mathrm{ab}}$ and $\mathbb{G}_m$ are both sheaves of abelian groups, any morphism of bands $\ms H\to \mathbb{G}_m$, or $\ms H\to \ms H^{\mathrm{ab}}$ corresponds to a unique such morphism of groups. Thus, there exists a unique morphism of groups $\epsilon^{\mathrm{ab}}: \ms H^{\mathrm{ab}}\to \mathbb{G}_m$ such that $\epsilon= \epsilon^{\mathrm{ab}} \circ \phi$. Then $\epsilon_*: H^2(\ms Y, \ms H)\to H^2(\ms Y, \mathbb{G}_m)$ is the composition of 
$\phi_*:H^2(\ms Y, \ms H)\to H^2(\ms Y, \ms H^{\mathrm{ab}})$ and $\epsilon^{\mathrm{ab}}_*: H^2(\ms Y, \ms H^{\mathrm{ab}})\to H^2(\ms Y, \mathbb{G}_m)$. 
We have $\epsilon^{\mathrm{ab}}_*$ given by the usual cohomology pushforward map for abelian groups, but $\phi_*$ is only defined by the correspondence relation in Giraud. The following lemma explicitly describes this map.
\begin{rmk}
    The locally constant band $\ms H$ is locally a constant sheaf of groups, and the sub-band $\ms H^{\mathrm{der}}$ is given locally by the derived subgroups of these sections of $\ms H$. Since $\ms H$ is a band, these glue together only up to conjugation, so we get a band $\ms H^{\mathrm{der}}$. The band $\ms H^{\mathrm{ab}}$ is constructed similarly, but is locally given by the abelianizations of the sections of $\ms H$. Since all of these groups are abelian, they glue together canonically, forming a sheaf of groups. 
\end{rmk}

\begin{lem}
    Let $S$ be a scheme or algebraic space, and let $\ms Y$ be a locally finitely presented algebraic stack over $S$. Let $\ms H$ be a locally constant finitely presented band on $\ms Y$. Let $\phi: \ms H\to \ms H^{\mathrm{ab}}$ be the abelianization morphism. Then the correspondence 
    $$\phi_*: H^2(\ms Y, \ms H)\to H^2(\ms Y, \ms H^{\mathrm{ab}})$$
    is given by $[\ms X]\mapsto [\ms X \thickslash \ms H^{\mathrm{der}}]$. 
\end{lem}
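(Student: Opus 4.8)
The plan is to produce, directly from the rigidification machinery of Section 5, a $\phi$-morphism of gerbes that realizes Giraud's correspondence, and then to invoke the very definition of $\phi_*$ as that correspondence. The point is that both $[\ms{X}]$ and the proposed image $[\ms{X} \thickslash \ms{H}^{\mathrm{der}}]$ already come with canonical representing gerbes, so we need only exhibit a morphism between them inducing $\phi$ on bands.

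First I would lift the band $\ms{H}$ to an honest sheaf of groups inside the inertia of $\ms{X}$. By Proposition 5.4 applied to the gerbe $p:\ms{X}\to\ms{Y}$ banded by $\ms{H}$, there is a flat finitely presented sheaf of groups $\ms{H}'\subseteq\ms{I}(\ms{X})$ with $\underline{\ms{H}'}\cong\ms{H}$ and $\ms{X}\thickslash\ms{H}'\cong\ms{Y}$. The derived subgroup $(\ms{H}')^{\mathrm{der}}\subseteq\ms{H}'$ is again flat and finitely presented (checked locally, where $\ms{H}'$ is a finite constant group and $(\ms{H}')^{\mathrm{der}}$ is its ordinary derived subgroup), and by Remark 5.2 and Remark 6.1 its band is exactly $\ms{H}^{\mathrm{der}}$. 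By definition, $\ms{X}\thickslash\ms{H}^{\mathrm{der}}$ denotes this rigidification $\ms{X}\thickslash(\ms{H}')^{\mathrm{der}}$. To see that $[\ms{X}\thickslash\ms{H}^{\mathrm{der}}]$ is a genuine class in $H^2(\ms{Y},\ms{H}^{\mathrm{ab}})$, I would apply Corollary 5.6 with $H=(\ms{H}')^{\mathrm{der}}\subseteq G=\ms{H}'$: this gives $(\ms{X}\thickslash(\ms{H}')^{\mathrm{der}})\thickslash(\ms{H}'/(\ms{H}')^{\mathrm{der}})\cong\ms{X}\thickslash\ms{H}'\cong\ms{Y}$, and since $\ms{H}'/(\ms{H}')^{\mathrm{der}}=(\ms{H}')^{\mathrm{ab}}$ has band $\ms{H}^{\mathrm{ab}}$, the induced map $\ms{X}\thickslash\ms{H}^{\mathrm{der}}\to\ms{Y}$ is a gerbe banded by $\ms{H}^{\mathrm{ab}}$.

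The core of the argument is then to check that the rigidification map $\rho:\ms{X}\to\ms{X}\thickslash\ms{H}^{\mathrm{der}}$, regarded as a morphism of gerbes over the \'etale site of $\ms{Y}$, is a $\phi$-morphism in the sense of Definition 4.9 and its band analogue Remark 4.10. By Theorem 5.1, for each $S$-scheme $T$ and $t\in\ms{X}(T)$ the map on relative stabilizers $\uAut_T(t)\to\uAut_T(\rho(t))$ is surjective with kernel $(\ms{H}')^{\mathrm{der}}_t$; passing to bands via the structural identifications, this is precisely the abelianization $\ms{H}\to\ms{H}^{\mathrm{ab}}$, namely $\phi$. Thus $\rho$ is a $\phi$-morphism from the gerbe $\ms{X}$ representing $[\ms{X}]$ to the gerbe $\ms{X}\thickslash\ms{H}^{\mathrm{der}}$ representing $[\ms{X}\thickslash\ms{H}^{\mathrm{der}}]$. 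Since the target band $\ms{H}^{\mathrm{ab}}$ is abelian, $\phi_*$ is an honest map rather than merely a correspondence (as recalled in the discussion leading to Proposition 4.13), and by the definition of Giraud's correspondence the existence of such a $\phi$-morphism forces $\phi_*([\ms{X}])=[\ms{X}\thickslash\ms{H}^{\mathrm{der}}]$.

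I expect the main obstacle to be the bookkeeping in this last step: carefully matching the surjection-with-kernel description of the stabilizer map from Theorem 5.1 against the commuting-diagram condition defining a $\phi$-morphism of gerbes, while tracking whether stabilizers are taken in $\ms{I}_{\ms{X}/S}$ or $\ms{I}_{\ms{X}/\ms{Y}}$ (Remark 4.3) and ensuring that the band identifications used are exactly those furnished by the rigidification (Remark 5.2). The remaining points---flatness and finite presentation of $(\ms{H}')^{\mathrm{der}}$, and the well-definedness of $\phi_*$ as a map---are routine given the abelian target.
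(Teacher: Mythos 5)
Your proposal is correct and follows essentially the same route as the paper's proof: lift the band to a sheaf of groups $\ms H'\subseteq\ms I(\ms X)$ via Proposition 5.4, check flatness and finite presentation of $(\ms H')^{\mathrm{der}}$ locally using local constancy, apply Corollary 5.6 to get that $\ms X\thickslash(\ms H')^{\mathrm{der}}\to\ms Y$ is an $\ms H^{\mathrm{ab}}$-gerbe, and conclude by observing that $\pi_1:\ms X\to\ms X\thickslash(\ms H')^{\mathrm{der}}$ is a $\phi$-morphism realizing Giraud's correspondence. Your explicit verification of the $\phi$-morphism condition via the stabilizer surjection in Theorem 5.1 is in fact slightly more detailed than the paper's, which asserts this step without elaboration.
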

\begin{proof}
    First, we show that $\ms H$ and $\ms H^{\mathrm{der}}$ are flat and finitely presented. Since these properties may be checked locally and $\ms H$ is locally constant, we may assume that $\ms H$ is a finite constant group scheme, associated to the finite group $H$, so that $\ms H^{\mathrm{der}}$ is the finite constant group scheme associated to the finite group $H^{\mathrm{der}}$, and both $\ms H$ and $\ms H^{\mathrm{der}}$ are flat and finitely presented.   
    
    Let $p:\ms X \to \ms Y$ be a gerbe banded by $\ms H$.  Then Proposition 5.4 implies that $\ms X$ is locally finitely presented, and there exists a flat finitely presented subgroup stack $\ms H'\subseteq \ms I (\ms X)$ such that $\ms X\thickslash \ms H'\cong \ms Y$ and $\underline{\ms H}'\cong \ms H$. Consider $\ms H'^{\mathrm{der}}\subseteq \ms H'\subseteq \ms I(\ms X)$. Clearly $\underline{\ms H'^{\mathrm{der}}}\cong \ms H^{\mathrm{der}}$, so $\ms H '^{\mathrm{der}}$ is also flat finitely presented. 

    Then Corollary 5.6 implies that $\ms H'/\ms H'^{\mathrm{der}}\cong \ms H'^{\mathrm{ab}}\cong \ms H ^{\mathrm{ab}}$ is a flat finitely presented subgroup stack of $\ms X\thickslash \ms H$ and we have the following commutative diagram,
    $$\xymatrix{\ms X \ar[r]^-{\pi_1} \ar[dr]_-{p}& \ms X\thickslash \ms H'^{\mathrm{der}} \ar[d]^-{\pi_2} \\ & \ms Y },$$
    so that indeed $\ms X\thickslash \ms H'^{\mathrm{der}}$ is an $\ms H^{\mathrm{ab}}$-gerbe over $\ms Y$, and $\pi_1:\ms X\to \ms X\thickslash\ms H'^{\mathrm{der}}$ is a $\phi$-morphism.     
\end{proof}

This gives us a sequence
\begin{align}
\xymatrix{ 0\ar[r] & \Pic(\ms{Y}) \ar[r]^-{\pi^*} & \Pic(\ms{X}) \ar[r]^-{\chi} & \Hom(\ms H, \mathbb{G}_m) \ar[r]^-{(-)_*[\ms X]} & H^2(\ms{Y},\mathbb{G}_m)}.
\end{align}

We will show that sequence (6.2.1) is exact, first reducing to the case when $\ms H$ is abelian. Using notation from Lemma 6.2 and its proof, consider the diagram with the top row exact. 

\begin{align}
    \xymatrix{0\ar[r] & \Pic(\ms Y) \ar[r]^-{\pi_2^*} \ar[d]^-{=} & \Pic(\ms X\thickslash \ms H'^{\mathrm{der}}) \ar[r]^-{\chi} \ar[d]^-{\pi_1^*} & \Hom(\ms H^{\mathrm{ab}}, \mathbb{G}_m) \ar[rr]^-{(-)_*[\ms X\thickslash \ms H'^{\mathrm{der}}]}  \ar[d]^-{(-)\circ \phi} & &H^2(\ms Y, \mathbb{G}_m) \ar[d]^-{=} \\ 0\ar[r] & \Pic(\ms Y) \ar[r]^-{p^*} & \Pic(\ms X) \ar[r]^-{\chi} & \Hom( \ms H, \mathbb{G}_m) \ar[rr]^-{(-)_*[\ms X]} & &H^2(\ms Y, \mathbb{G}_m)}
\end{align}

The first square commutes by the diagram above, and the  third squares commutes by the discussion describing $(-)_*[\ms X]$. We show that the second square also commutes. Let $L$ be a line bundle on $\ms X \thickslash \ms H'^{\mathrm{der}}$. To define $\chi_{\pi_1^*L}$, we have an automorphism $\chi_{\pi_1^*L}(a): t^*\pi_1^*L\cong t^*\pi_1^*L$ for every $t:T\to \ms X$ and automorphism $a\in \uAut_T(t)$. To define $\chi_{L}$, we have an automorphism $\chi_{L}(a): s^*L\cong s^*L$ for every $s:T\to \ms X \thickslash \ms H'^{\mathrm{der}}$ and automorphism $a\in\uAut_T(s)$. Here, we view $\uAut_T(t)$ in $\ms I_{\ms X/\ms Y}$ and $\uAut_T(s)$ in $\ms I_{(\ms X\thickslash \ms H^{\mathrm{der}})/\ms Y}$. Since $\pi_1:\ms X\to \ms X\thickslash\ms H'^{\mathrm{der}}$ is a gerbe, there exists a covering $s_i:T_i\to T$ such that $s_i^*s$ lifts to $\ms X$, and call the lift $t_i$. We have $s_i^*s=\pi_1\circ t_i$. Then precomposing the character $\chi_L$ with $\phi$ corresponds to precomposing with $\ms H_{T_i}\cong \underline{\uAut_{T_i}(t_i)}\to \underline{\uAut_T(\pi_1\circ t)}\cong \ms H_{T_i}^{\mathrm{ab}}$, letting automorphisms $a\in\uAut_{T_i}(t_i)$ in $\ms I_{\ms X/\ms Y}$ act on $t_i^*\pi_1^*L$.  Therefore $\chi_L\circ \phi$ is $\chi_{\pi_1^*L}$, and the diagram commutes.   

The next proposition will show that $\pi_1^*$ is an isomorphism.

\begin{prop}
Let $S$ be a locally noetherian scheme, $\ms Y$ a locally finitely presented stack over $S$, and $\ms H$ be a tame locally constant band on $\ms Y$. Let $\pi:\ms{X}\to \ms{Y}$ be a gerbe banded by $\ms H$. The pullback functor
$$\pi^*:(\mathrm{line} \ \mathrm{bundles} \ \mathrm{on} \ \ms{Y})\to(\mathrm{line} \ \mathrm{bundles} \ \mathrm{on} \ \ms{X})$$
induces an equivalence of categories between line bundles on $\ms{Y}$ and line bundles $\ms{L}$ on $\ms{X}$ such that for all geometric points $x:\Spec k \to \ms{X}$, the representation of $\ms H_x=\underline{\uAut_k(x)}$ on $x^*\ms{L}$ is trivial. 
\end{prop}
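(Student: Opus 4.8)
The plan is to verify separately the two conditions that make $\pi^*$ an equivalence onto the described subcategory: first that $\pi^*$ is fully faithful on line bundles, and second that its essential image is exactly the line bundles with trivial band representation at every geometric point. The engine driving both parts is the identity $\pi_*\mc O_{\ms X}\cong \mc O_{\ms Y}$, which I would establish at the outset. Since $\pi$ is a gerbe banded by $\ms H$, it is faithfully flat and, fppf-locally on $\ms Y$, isomorphic to the trivial gerbe $B_U\ms H$ for a cover $U\to\ms Y$. As the formation of $\pi_*\mc O_{\ms X}$ commutes with flat base change, it suffices to check $\pi_*\mc O_{B_U\ms H}\cong\mc O_U$. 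This is precisely where tameness enters: $\ms H$ is locally a tame finite constant group, hence linearly reductive, so global sections over $B_U\ms H$ compute $\ms H$-invariants, and since $\ms H$ acts trivially on the structure sheaf we recover $\mc O_U$.

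Granting this, full faithfulness is a short computation via the projection formula. For line bundles $L,M$ on $\ms Y$, put $N=L^\vee\otimes M$; then
$$\Hom_{\ms X}(\pi^*L,\pi^*M)=H^0(\ms X,\pi^*N)=H^0(\ms Y,\pi_*\pi^*N)=H^0(\ms Y,N\otimes\pi_*\mc O_{\ms X})=H^0(\ms Y,N)=\Hom_{\ms Y}(L,M),$$
and one checks the resulting identification is the one induced by $\pi^*$. Hence $\pi^*$ is fully faithful.

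For the essential image, the easy inclusion is that any $\pi^*L$ has trivial band representation: for a geometric point $x:\Spec k\to\ms X$, every $a\in\uAut_k(x)$ viewed in $\ms I_{\ms X/\ms Y}$ maps to the identity in $\uAut(\pi(x))$, so it acts trivially on $x^*\pi^*L=(\pi\circ x)^*L$. For the reverse inclusion I would take an $\ms L$ with trivial representation at all geometric points and descend it using the candidate $M:=\pi_*\ms L$ together with the counit $\pi^*\pi_*\ms L\to\ms L$. Both that $M$ is a line bundle and that the counit is an isomorphism may be checked fppf-locally on $\ms Y$, where $\ms X\cong B_U\ms H$. There a line bundle corresponds, by the equivalence of Proposition 3.1, to a line bundle $\ms L_0$ on $U$ together with a character $\ms H\to\mathbb{G}_m$; the hypothesis forces this character to vanish, so $\ms L\cong\pi^*\ms L_0$, and tameness again yields $\pi_*\ms L\cong\ms L_0$ with counit the identity.

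The main obstacle I anticipate is the local analysis on $B_U\ms H$, and in particular cleanly passing from the pointwise triviality hypothesis to the global vanishing of the character $\chi_{\ms L}\colon\ms H\to\mathbb{G}_m$ that makes the local descent available. Care is needed because $\ms H$ is only a band, so one should factor $\chi_{\ms L}$ through $\ms H^{\mathrm{ab}}$ (as in the discussion preceding Lemma 6.2) before invoking that a homomorphism of locally constant finite groups into $\mathbb{G}_m$ is detected on geometric fibers. Working with $M=\pi_*\ms L$ as a single global object, rather than gluing local descents by hand, is what sidesteps the cocycle and gerbe-class bookkeeping that would otherwise intervene.
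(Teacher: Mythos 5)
Your proposal is correct, and its skeleton agrees with the paper's: both proofs show the unit $L\to\pi_*\pi^*L$ and counit $\pi^*\pi_*\ms L\to\ms L$ are isomorphisms by passing locally on $\ms Y$ to the trivial gerbe, with tameness (linear reductivity) as the essential input. The implementations of the two halves differ, though, in ways worth recording. For full faithfulness the paper does not use the projection formula; it reduces further to $\ms Y=[\Spec A/G]$ with $A$ local (citing its Theorem 2.9 and Remark 2.10) and checks $\pi_*\pi^*L\cong L$ by hand on rank-one modules with semi-linear $G\times H$-action. Your projection-formula argument from $\pi_*\mc O_{\ms X}\cong\mc O_{\ms Y}$ is more structural and avoids invoking that local quotient presentation of $\ms Y$ --- which the paper states for Deligne--Mumford stacks and so sits slightly awkwardly with the generality claimed in Remark 1.4; your route needs only a scheme cover $U\to\ms Y$ trivializing the gerbe, which is available for any algebraic stack. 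For the essential image the mechanisms genuinely differ: the paper takes the rank-one module $M$ over the local ring $A$ whose reduction mod $\mathfrak{m}_A$ has trivial $H$-action, and uses linear reductivity to lift the generator of $M/\mathfrak{m}_A M$ to an $H$-invariant generator of $M$, making the $H$-action trivial outright; you instead read off a character $\chi\colon\ms H\to\mathbb{G}_m$ on $B_U\ms H$ and argue it vanishes because it is trivial on geometric fibers. Your step is correct but the rigidity claim you flag carries real weight and should be written out: tameness makes the relevant $\mu_n$ \'etale over $U$, and a section of a separated \'etale group scheme agreeing with the identity section at every point equals the identity; without tameness, nilpotents could intervene and the pointwise hypothesis would not suffice. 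The paper's invariant-generator lifting absorbs that issue automatically at the price of the explicit local model, while your version is cleaner globally and slightly more general, provided that one sentence is made into an actual argument.
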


\begin{proof}
    Using ideas from the proof of Lemma 3.3 \cite[6.2]{Picequiv},
    we will check that for a line bundle $L$ on $\ms{Y}$, $\pi_*\pi^*L\cong L$, and for a line bundle $\ms{L}$ on $\ms{X}$ as above, $\pi^*\pi_*\ms{L}\cong \ms{L}$. We can check this locally, so we can assume that $\ms{X}=B_{\ms{Y}}\ms H$, so that we have a section $\ms Y\to B_{\ms Y}\ms H$, and furthermore that $\ms Y=[\Spec A/G]$ for some local ring $A$ and finite group $G$ (See Theorem 2.9 and Remark 2.10 \cite[11.3.1]{Olssonbook}), and $\ms H =H$ a linearly reductive constant group.

    A line bundle on $\ms Y$ is then a 1-dimensional $A$-module $M$ with semi-linear $G$-action, and a line bundle on $\ms X$ is a 1-dimensional $A$-module $M$ with semi-linear $G\times H$-action, with $H$ acting trivially on $A$. The pullback and pushforward maps are both given by $M\mapsto M$ with the $G\times H$-action induced by its projection to $G$, and $G$-action induced by its inclusion into $G\times H$, respectively. It is then straightforward that $\pi_*\pi^*L\cong L$. 

    Let $\mathfrak{m}_A$ be the maximal ideal of $A$ and write $\ms L$ as a 1-dimensional $R$-module $M=R\cdot e$, with $G\times H$-action $(g,h)\cdot m$ satisfying $(g,h)\cdot \overline{m}=(g,1_H)\cdot \overline{m}$, for $\overline{m}\in M/\mathfrak{m}_A M$. Since $H$ is linearly reductive, the surjection $M\to M/\mathfrak{m}_A M$ remains surjective after taking invariants under $1\times H\cong H$, so we have a surjection 
    $$M^{1\times H}\to M/\mathfrak{m}_A M^{1\times H}\cong M/\mathfrak{m}_A M.$$
    Thus, we can lift the basis element $\overline{e}$ of $M/\mathfrak{m}_A M$ to $M^{1\times H}$ so that we can write $M$ with basis element $e$ invariant under $H$. Then, any $m\in M$ can be written $a\cdot e$ so that the action $(g,h)\cdot (a\cdot e)= (g\cdot a)(g\cdot e)$. It is now also straightforward to check that $\pi^*\pi_*\ms L\cong \ms L$, and the result follows. 
\end{proof}

Now let $\ms{L}\in\Pic(\ms{X})$. For any geometric point $x:\Spec k\to \ms{X}$, we have an action of $\uAut_k(x)\cong \ms{H}_x^{\prime \mathrm{der}}$ on $x^* \ms{L}$ inducing the character $\chi_{x^* \ms{L}}: \ms{H}_x^{\prime \mathrm{der}} \to \mathbb{G}_m$ which must be the zero map since $\mathbb{G}_m$ is abelian. Thus, $\ms{L}$ satisfies the condition that for all geometric points $x:\Spec k \to \ms{X}$, the representation of $\uAut_k(x)$ corresponding to $x^* \ms{L}$ is trivial, so by Proposition 6.3, $\pi_1^*$ is an equivalence $\Pic(\ms{X}\thickslash \ms H ^{\prime \mathrm{der}})\cong \Pic(\ms{X})$. 

Now every vertical arrow in (6.6.2) is an isomorphism, so the bottom sequence is also exact, and we have reduced to the case where $\ms H$ is a sheaf of abelian groups on $\ms Y$. 

After reduction to the abelian case, Proposition 6.3 also implies that sequence (6.2.1) is exact on the left and in the middle. For exactness at $\Pic(\ms{X})$, we need 
$$\pi^*(\Pic(\ms{Y}))=\{\ms{L}\in\Pic(\ms{X}): \chi_{\ms{L}} \ \mathrm{is} \ \mathrm{trivial}\}.$$
For a line bundle $\ms{L}$ on $\ms{X}$ such that $\chi_{\ms{L}}$ is trivial, we have, for each geometric point $x:\Spec k \to \ms{X}$, the representation of $\ms H_x$ is trivial, so by Proposition 6.3, 
$\ms{L}\in\pi^*(\Pic(\ms{Y}))$. Conversely, if $\ms{L}\in \pi^*(\Pic(\ms{Y}))$, then the action of $\ms H_T$ on $t^*\ms{L}$ for any $T$-point $t:T\to \ms{X}$ is trivial. If there were some $t$ such that $\ms H_T$ acted non-trivially on $t^*\ms L$, then there would exist some geometric point $x:\Spec k\to \ms X$ which factors through $t:T\to \ms X$ such that the representation of $\ms H_x$ on $x^*\ms L$ is nontrivial, contradicting Proposition 6.3. 

Finally, for exactness at $\Hom(\ms H,\mathbb{G}_m)$, we need to show that the image of $\chi$ consists of characters $\epsilon:\ms H\to\mathbb{G}_m$ such that $\epsilon_*([\ms{X}])=[B_\ms{Y}(\mathbb{G}_m)]$. The following lemma shows that a line bundle $\ms{L}$ on $\ms{X}$ with character $\epsilon$ corresponds to a morphism of stacks $\ms{X}\to B_{\ms{Y}}\mathbb{G}_m$ with map of stabilizers given by $\epsilon$, so that $\epsilon_*[\ms{X}]=B_{\ms Y}(\mathbb{G}_m)$.  
\begin{lem} 
    Let $\ms{H}$ be an abelian group scheme, $p:\ms{X}\to \ms{Y}$ be an $\ms{H}$-gerbe, and let $\epsilon:\ms{H}\to \mathbb{G}_m$ be a homomorphism of group schemes. Giving a line bundle $\ms{L}$ on $\ms{X}$ with character $\chi_{\ms{L}}=\epsilon$ is equivalent to giving an $\epsilon$-morphism of gerbes (Definition 4.9) $\ms{X}\to B_{\ms{Y}}(\mathbb{G}_m)$.
\end{lem}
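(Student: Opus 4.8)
The plan is to deduce the statement from the standard identification of line bundles on $\ms{X}$ with morphisms of $\ms{Y}$-stacks $\ms{X}\to B_{\ms{Y}}\mathbb{G}_m$, and then to check that under this identification the map induced on stabilizer groups is exactly the character $\chi$ constructed in Section 6. First I would recall that, since $B_{\ms{Y}}\mathbb{G}_m$ is the relative classifying stack of $\mathbb{G}_m$-torsors over $\ms{Y}$, a morphism of $\ms{Y}$-stacks $f:\ms{X}\to B_{\ms{Y}}\mathbb{G}_m$ is the same data as a $\mathbb{G}_m$-torsor on $\ms{X}$, equivalently a line bundle $\ms{L}$ on $\ms{X}$ via its torsor of frames. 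Concretely, for a $T$-point $t:T\to\ms{X}$, the morphism $f_{\ms{L}}$ associated to $\ms{L}$ sends $t$ to the $\mathbb{G}_m$-torsor attached to $t^*\ms{L}$; this is functorial and compatible with base change, and yields an equivalence between line bundles on $\ms{X}$ and morphisms $\ms{X}\to B_{\ms{Y}}\mathbb{G}_m$ over $\ms{Y}$.

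Next I would compute the induced map on automorphism sheaves. For $t:T\to\ms{X}$ and $a\in\uAut_T(t)$ viewed inside $\ms{I}_{\ms{X}/\ms{Y}}$, the morphism $f_{\ms{L}}$ carries $a$ to the automorphism $(f_{\ms{L}})_*(a)$ of the torsor attached to $t^*\ms{L}$. Since the automorphism group of a $\mathbb{G}_m$-torsor is $\mathbb{G}_m$ acting by scaling, $(f_{\ms{L}})_*(a)$ is precisely the scalar by which $a$ acts on $t^*\ms{L}$, which by the definition of $\chi$ in Section 6 is $\chi_{\ms{L}}(a)$. Hence the map of stabilizers $(f_{\ms{L}})_*:\uAut_T(t)\cong\ms{H}_T\to\uAut_T(f_{\ms{L}}(t))=\mathbb{G}_m$ equals $\chi_{\ms{L}}$, compatibly with base change.

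Comparing with Definition 4.9, and using the banding isomorphisms $\iota_t:\ms{H}_{p(t)}\cong\underline{\uAut(t)}$ on $\ms{X}$ together with the canonical identification for the trivial $\mathbb{G}_m$-gerbe $B_{\ms{Y}}\mathbb{G}_m$, the morphism $f_{\ms{L}}$ is an $\epsilon$-morphism of gerbes if and only if $(f_{\ms{L}})_*=\epsilon$, that is, if and only if $\chi_{\ms{L}}=\epsilon$. Running the correspondence in reverse, every $\epsilon$-morphism $f$ is $f_{\ms{L}_f}$ for the line bundle $\ms{L}_f$ it classifies, and this $\ms{L}_f$ satisfies $\chi_{\ms{L}_f}=\epsilon$; this gives the asserted equivalence between line bundles $\ms{L}$ with $\chi_{\ms{L}}=\epsilon$ and $\epsilon$-morphisms $\ms{X}\to B_{\ms{Y}}\mathbb{G}_m$.

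I expect the step requiring the most care to be the computation of the induced stabilizer map, namely verifying that $(f_{\ms{L}})_*$ is literally $\chi_{\ms{L}}$: this means unwinding both the definition of the character (the action of $\uAut_T(t)$ on $t^*\ms{L}$, taken inside $\ms{I}_{\ms{X}/\ms{Y}}$) and the identification of the automorphisms of a $\mathbb{G}_m$-torsor with $\mathbb{G}_m$, and confirming their compatibility with the banding isomorphisms $\iota$. Since the present lemma is applied only after the reduction to $\ms{H}$ abelian, the relevant bands are honest sheaves of groups, so this comparison is a direct check rather than a subtle band-level computation.
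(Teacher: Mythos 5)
Your proposal is correct and follows essentially the same route as the paper: both identify a line bundle $\ms{L}$ with the morphism $\ms{X}\to B_{\ms{Y}}(\mathbb{G}_m)$ sending $t:T\to\ms{X}$ to the $\mathbb{G}_m$-torsor of $t^*\ms{L}$, then observe that the induced map on stabilizers is exactly $\chi_{\ms{L}}$ because $h\in\ms{H}_T$ acts on $t^*\ms{L}$ by the scalar $\chi_{\ms{L}}(h)$, and reverse the correspondence for the converse. Your write-up is in fact somewhat more careful than the paper's about the identification $\uAut(\text{torsor})\cong\mathbb{G}_m$ and the compatibility with the banding isomorphisms, but the underlying argument is identical.
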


\begin{proof}
   Given a line bundle $\ms{L}$ on $\ms{X}$, let $T$ be a scheme, and $t:T\to \ms{X}$ a $T$-point of $\ms{X}$. Then $t^*\ms{L}$ is a line bundle on $T$, so is a $\mathbb{G}_m$-torsor. Since  $B_{\ms{Y}}(\mathbb{G}_m)$ classifies $\mathbb{G}_m$-torsors on $\ms Y$, $t^*\ms L$ is a section of $B_{\ms{Y}}(\mathbb{G}_m)(T)$ . Thus, we get a map $\ms{X}\to B_{\ms{Y}}(\mathbb{G}_m)$. Furthermore, the line bundle having character $\epsilon$ means that for each $T$-point $t:T\to\ms{X}$, $\chi_{t^*\ms{L}}: \ms{H}_T\to \mathbb{G}_m$ which determines the action of $\uAut_T(t)=\ms{H}_T$ on $t^*\ms{L}$ is given by $\epsilon$. In particular, for $h\in \ms{H}_T$, $\chi_{\ms{L}}(h): t^*\ms{L}\to t^*\ms{L}$ is given by multiplication by $\epsilon(h)$. Therefore, our morphism of stacks $\ms{X}\to B_{\ms{Y}}(\mathbb{G}_m)$ has morphism of stabilizer groups given by $\epsilon$. Conversely, a morphism of stacks $\ms{X}\to B_{\ms{Y}}(\mathbb{G}_m)$ corresponds to a line bundle on $\ms{X}$, and the morphism of stacks having its morphism of stabilizer groups $\epsilon$ corresponds to the line bundle having character $\epsilon.$
\end{proof}
This completes the proof of Theorem 1.2.
\qed


\section{Stacky Curves as Gerbes}
In this section, we prove Proposition 1.5 and Corollary 1.7, using rigidification to realize any stacky curve $\ms X$ as a gerbe over a stacky curve with trivial generic stabilizer.

Let $\ms{X}$ be a stacky curve with coarse moduli space $\pi:\ms{X}\to X$ and inertia stack $\ms{I}$. Since $\phi: \ms{I}\to \ms{X}$ is finite, it is affine, so we can write $\ms{I}=\uSpec_{\ms X} (\ms{A})$, where $\ms{A}=\phi_*\ms{O}_{\ms{I}}$ is a coherent sheaf of $\ms{O}_\ms{X}$-algebras on $\ms{X}$, under the equivalence of categories \cite[10.2.4]{Olssonbook}
$${\uSpec}_{\ms{X}}(-): (\mathrm{sheaves} \ \mathrm{of} \ \mathrm{algebras} \ \mathrm{in} \ \Qcoh(\ms{X}))^{op} \to (\mathrm{affine} \ \mathrm{morphisms} \ f:\ms{Y}\to \ms{X}),$$
where $\ms{A}\mapsto {\uSpec}_\ms{X} (\ms{A})\to \ms{X}$, and the inverse functor is given by $(f:\ms{Y}\to\ms{X})\mapsto f_*\ms{O}_{\ms{Y}}.$

Let $j:\ms{U}\to \ms{X}$ be the inclusion of a dense open substack where the inertia group $\ms{I}_\ms{U}$ is finite flat over $\ms{U}$. This exists by \cite[06RB]{SP} We have the following cartesian diagram
\begin{align}
\xymatrix{\ms{I}_\ms{U}={\uSpec}_\ms{U}(\ms{A}_\ms{U})\ar[r]^-{j'} \ar[d]_-{\phi'} & {\uSpec}_\ms{X}(\ms{A})=\ms{I} \ar[d]^-{\phi} \\ \ms{U} \ar[r]^-{j} & \ms{X}}
\end{align}
where $\ms{A}_\ms{U}$ can be described as follows. From the equivalence of categories above and the fact that $j'^*\ms{O}_{\ms{I}}=\ms{O}_{\ms{I}_{\ms{U}}}=j^*\ms A$, we see that $\ms{A}_\ms{U}=\phi'_*j'^*\ms{O}_\ms{I}=j^*\phi_*\ms O_{\ms I}$. 
We have that $\phi'$ is finite flat, so $\ms{A}_\ms{U}$ is a locally free sheaf of $\ms{O}_\ms{U}$-modules of finite rank on $\ms{U}$.

The morphism $j':\ms{I}_{\ms{U}}\to \ms{I}$ is an embedding with image in $\ms{I}$ given by $j'(\ms{I}_\ms{U})={\uSpec}_{\ms{X}}(j_*\ms{A}_{\ms{U}})$. Note that the inclusion of the image $j'(\ms{I}_\ms{U})\to \ms{I}$ is induced by the natural adjunction map $\ms{A}\to j_*j^*\ms{A}$. Let $\ms{H}$ be the closure of $j'(\ms{I}_\ms{U})$ in $\ms{I}$. Then $\ms{H}={\uSpec}_{\ms{X}} (\overline{\ms{A}})$, where $\overline{\ms{A}}=\ms{A}/\mathrm{ker}(\ms{A}\to j_*\ms{A}_\ms{U})$. For $\ms{H}$ to be finite flat over $\ms{X}$, we must have $\overline{\ms{A}}$ be a locally free sheaf of $\ms{O}_\ms{X}$-modules of finite rank on $\ms{X}$. 

We have the following commutative diagram.
$$\xymatrix{\ms{I}_\ms{U} \ar[r] \ar[d] & j'(\ms{I}_\ms{U}) \ar[r] \ar[drr] & \ms{H} \ar[r] \ar[dr] & \ms{I} \ar[d]^-{\phi} \\ \ms{U} \ar[rrr]^-{j} &&& \ms{X}}$$

In particular, we have the following commutative diagram of stacks affine over $\ms{X}$, 
$$\xymatrix{& \ms{H}= {\uSpec}_\ms{X}(\overline{\ms{A}}) \ar[dr] & \\ j'(\ms{I}_\ms{U})={\uSpec}_\ms{X}(j_*\ms{A}_\ms{U})\ar[ur] \ar[rr] && \ms{I}={\uSpec}_\ms{X}(\ms{A}) },$$
induced from the following commutative diagram of quasi-coherent $\ms{O}_{\ms{X}}$-algebras on $\ms{X}$,
$$\xymatrix{ & \overline{\ms{A}}\ar[dl] & \\ j_*\ms{A}_\ms{U}  && \ms{A} \ar[ll] \ar[ul]}.$$

It suffices to check that $\ms{H}$ is flat after base change to $\Spec \ms{O}_{\ms{X},x}$ because the property of being flat commutes with flat base change. After base change with the inclusion $j_x: \Spec \ms{O}_{\ms{X},x} \to \ms{X}$, our diagram is as follows,
$$\xymatrix{\ms{I}_{\ms{U}_{(x)}} \ar[r] \ar[d] & j'(\ms{I}_{\ms{U}_{(x)}}) \ar[r] \ar[drr] & \ms{H}_{(x)} \ar[r] \ar[dr] & \ms{I}_{(x)} \ar[d] \\ \ms{U}_{(x)} \ar[rrr]^-{j} &&& \Spec \ms{O}_{\ms{X},x} }.$$
As with the base change to $\ms{U}$, the inertia stacks in the diagram are all affine over their bases. In particular, we have 
\begin{align*}
\ms{I}_{(x)}=& \uSpec_{\ms{O}_{\ms{X},x}}(\ms{A}_x), \ \mathrm{where} \  \ms{A}_x=j_x^*\ms{A}, \\
\ms{I}_{\ms{U}_{(x)}}=& \uSpec_{\ms{U}_{(x)}}(\ms{A}_{\ms{U}_{(x)}}),\  \mathrm{where} \  \ms{A}_{\ms{U}_{(x)}}= j^*\ms{A}_x, \\
j'(\ms{I}_{\ms{U}_{(x)}})=& \uSpec_{\ms{O}_{\ms{X},x}}(j_*\ms{A}_{\ms{U}_{(x)}}), \\
\ms{H}_{(x)}=& \uSpec_{\ms{O}_{\ms{X},x}}(\overline{\ms{A}_x}),\  \mathrm{where} \  \overline{\ms{A}_x}= \ms{A}_x/\mathrm{ker}(\ms{A}_x\to j_* \ms{A}_{\ms{U}_{(x)}}).
\end{align*}

To ease the notation, write $\ms{O}_{\ms{X},x}=A$, a local ring of dimension 1.
Then since $\ms{X}$ is smooth, $A$ is also regular, so it is a discrete valuation ring. Then $\ms{U}_{(x)}$ is either $\Spec A$ or $\Spec K$, where $K$ is the field of fractions of $A$, so assume $\ms U_{(x)}=\Spec K$. Also, $\ms{A}_x=M$ for some finite $A$-algebra $M$. 
Letting $\pi\in A$ be the uniformizer of $A$, so that $\pi$ generates the maximal ideal $\mathfrak{m}$ of $A$, we have $M=\oplus_{i=1}^n A/\pi^{e_i} \oplus A^r$ for some $e_i,r\in \Z$. Then $\ms{A}_{\ms{U}_{(x)}}= M\otimes_A K= K^r$ as a $K$-algebra, and $j_*\ms{A}_{\ms{U}_{(x)}}=K^r$ as an $A$-algebra, and $\overline{\ms{A}_x}= A^r$. Clearly, $A^r$ is a finite free $A$-module, so $\ms{H}_{(x)}$ is finite flat over $\Spec \ms{O}_{\ms{X},x}$. Thus, $\ms{H}$ is finite flat over $\ms{X}$. 

Since $\ms U$ is dense in $\ms X$, $\ms H$ also has the property that for any open set $\ms V\subseteq \ms X$, the dense open $\ms V'=\ms U\cap \ms V\subseteq \ms V$ has inertia group $\ms I_{\ms V'}=\ms H_{\ms V'}$. Moreover, since $\ms H$ is a subgroup of $\ms I$, it is unramified, so it is \'etale. 

Now, we can apply Theorem 5.1 to get a locally finitely presented algebraic stack $\ms Y:= \ms X\thickslash \ms H$ and a morphism $\pi:\ms X\to \ms X\thickslash \ms H$ which is an fppf-gerbe banded by $\underline{\ms H}$ and since $\ms H$ is finite and \'etale, $\pi$ is proper and \'etale. We show that $\ms Y$ is a stacky curve with trivial generic stabilizer. Since $\ms X$ is smooth, proper, and geometrically connected, so is $\ms Y$. An \'etale cover $U\to \ms X$ gives us an \'etale cover of $\ms X\thickslash \ms H$ after composing with $\pi$ since $\pi$ is \'etale, thus $\ms Y$ is also Deligne-Mumford.

Let $\ms U_{\ms Y}\subseteq \ms Y$ be the substack as in Proposition 2.6, characterized by the property that any $t:T\to \ms Y$ factors through $\ms U_{\ms Y}$ if and only if $\uAut_T(t)=\{\mathrm{id}_T\}$. Suppose that $\ms U_{\ms Y}$ is not dense in $\ms Y$. Then there exists an open substack $\ms V\subseteq \ms Y$ such that every point of $\ms V$ has a nontrivial automorphism group, say $\uAut_T(t)=\ms G_t$, and $\ms I_{\ms V}=\ms G$. Let $\widetilde{\ms V}$ be the pullback of $\ms X$ over $\ms Y$ to $\ms V$. Then the inertia of $\widetilde{\ms V}$ is equal to some extension of the groups $\ms G$ and $\ms H$. This contradicts the property that the inertia of $\ms U\cap \widetilde{\ms V}$ is given by $\ms H$. Thus, $\ms Y$ is a stacky curve with trivial generic stabilizer. Finally, if $\ms X$ is tame, then so is $\ms Y$, since the stabilizer groups of $\ms Y$ are subgroups of stabilizer groups of $\ms X$. This proves Proposition 1.5.
\qed

To prove Corollary 1.7 using Theorem 1.2, we need $\ms H$ tame, locally constant, and finitely presented. Since $\ms X$ is tame, so is $\ms H$. Proposition 1.5 shows that $\ms H$ is finite flat, so we show that $\ms H$ is locally constant. To do this, we describe the local picture surrounding the construction of $\ms H$ more explicitly. As in Remark 2.10, we can express $\ms{X}$ as a stack quotient locally, and we have the following cartesian diagram with $\pi:\ms{X}\to X$ the coarse space morphism, 
$$\xymatrix{[\Spec A/G] \ar[r] \ar[d] & \ms{X} \ar[d]^-{\pi} \\ \Spec A^G \ar[r] & X }$$
where $A$ is a local ring and $G$ is a group scheme. 

\begin{prop} \cite[0374]{SP}
    For a stack quotient $[U/G]$ where $U$ is a scheme and $G$ is a group scheme, we can describe the inertia stack as 
    $$\ms{I}([U/G])=\coprod_{g\in G} [U^g/G]$$
    where $U^g=\{u\in U: g\cdot u=u\}$
\end{prop}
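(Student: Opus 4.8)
The statement is recorded in \cite[0374]{SP}; I sketch the argument I would give. By definition $\ms I([U/G])=[U/G]\times_{[U/G]\times[U/G]}[U/G]$, the fiber product of the diagonal with itself, so that over a scheme $T$ its objects are pairs $(\xi,\alpha)$ with $\xi\in[U/G](T)$ and $\alpha\in\Aut_T(\xi)$, exactly as in the description of the inertia recalled in \cite[8.1.18]{Olssonbook}. The plan is to compute this using the groupoid presentation $G\times U\rightrightarrows U$ of $[U/G]$, whose two structure maps are the projection $s:(g,u)\mapsto u$ and the action $t:(g,u)\mapsto g\cdot u$. The inertia of a stack presented by a groupoid is itself presented by the associated inertia (loop) groupoid, whose space of objects is the space of ``loops''
$$\ms F=(G\times U)\times_{(s,t),\,U\times U,\,\Delta}U=\{(g,u)\in G\times U: g\cdot u=u\},$$
and whose arrows are given by the conjugation action of $G$.

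To see why $\ms F$ and this action are the right data, I would first work fppf-locally on $T$, where every $G$-torsor is trivial and an object of $[U/G]$ is just a map $u:T\to U$, with $\Hom_T(u,u')=\{h:T\to G:h\cdot u=u'\}$ and hence $\Aut_T(u)=\{h:h\cdot u=u\}$. Thus a local object of the inertia is a pair $(u,g)$ with $g\cdot u=u$, i.e. a map $T\to\ms F$, and a morphism $(u,g)\to(u',g')$ is an $h$ with $h\cdot u=u'$ carrying $g$ to $hgh^{-1}$. Since $G$ is a constant finite group scheme here, $\ms F=\coprod_{g\in G}U^g$, and the resulting local groupoid is $G\times\ms F\rightrightarrows\ms F$ for the twisted action $h\cdot(u,g)=(h\cdot u,\,hgh^{-1})$. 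As every torsor is locally trivial and this local description is compatible with stackification, I conclude $\ms I([U/G])\cong[\ms F/G]=[(\coprod_{g\in G}U^g)/G]$.

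It remains to unwind the quotient. Because $h\in G$ sends the component $U^g$ isomorphically onto $U^{hgh^{-1}}$, the group $G$ permutes the indexing set by conjugation; regrouping the components into conjugacy classes and noting that the stabilizer of the index $g$ is its centralizer gives the general formula $\ms I([U/G])\cong\coprod_{[g]}[U^g/C_G(g)]$, a coproduct over conjugacy classes. When $G$ is abelian conjugation is trivial, each $U^g$ is $G$-stable so that $[U^g/G]$ makes sense on the nose, and this collapses to the stated $\coprod_{g\in G}[U^g/G]$. This abelian case is the one relevant to us, since the stabilizer groups of a tame stacky curve with trivial generic stabilizer are the cyclic groups $\mu_{n_i}$ of Proposition 2.17.

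The main obstacle is exactly this last bookkeeping: the formula as written is only well-defined once $G$ preserves each $U^g$, which fails for nonabelian $G$, where the honest answer carries centralizers and conjugacy classes. I would therefore either restrict to the abelian stabilizers arising in the application or phrase the general statement with centralizers; granting the inertia-groupoid presentation, the intervening descent and stackification steps are routine.
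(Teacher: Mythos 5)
Your argument is correct, but note that the paper does not actually prove this proposition: it is quoted directly from the Stacks Project \cite[0374]{SP}, so there is no internal proof to compare against. Your route --- presenting $[U/G]$ by the action groupoid $G\times U\rightrightarrows U$, identifying the inertia with the quotient of the loop space $\ms F=\{(g,u):g\cdot u=u\}$ by the conjugation-twisted $G$-action, and verifying this fppf-locally where torsors trivialize --- is the standard argument and is essentially how the cited result is established. Your observation that the displayed formula only parses literally when each $U^g$ is $G$-stable is a genuine and worthwhile point: the notation $\coprod_{g\in G}[U^g/G]$ should be read as $[(\coprod_{g\in G}U^g)/G]$ with $G$ permuting the components by conjugation, equivalently $\coprod_{[g]}[U^g/C_G(g)]$ over conjugacy classes, and your proof establishes exactly this. (The indexing by elements $g\in G$ also tacitly assumes $G$ finite constant, as you note; this is the case relevant here by Lemma 2.4.)

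However, your closing claim --- that the abelian case suffices ``since the stabilizer groups of a tame stacky curve with trivial generic stabilizer are the cyclic groups $\mu_{n_i}$ of Proposition 2.17'' --- misidentifies where the proposition is used. In Section 7 it is applied to the original stacky curve $\ms X$ \emph{before} rigidification, whose local presentation $[\Spec A/G]$ has $G$ a full stabilizer group, an extension of some $\mu_n$ by the generic stabilizer; such $G$ can be nonabelian, so you cannot restrict to abelian $G$ and must use the conjugation-quotient reading throughout. With that reading the paper's subsequent argument still goes through: the closure of $\ms I_K$ has components indexed by the normal subgroup $H=\ker(G\to\Aut(A))$, the group $G$ permutes them by conjugation, and the resulting subgroup stack pulls back to the constant group $H$ along the cover $\Spec A\to[\Spec A/G]$, which is all that ``locally constant'' (as a band) requires. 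Indeed the paper's own identity $\ms H_A=[\Spec A/G]\times H$ commits the same abuse you flag --- it is literally a product only when conjugation of $G$ on $H$ is trivial --- so your centralizer bookkeeping is not pedantry; it is what makes the Section 7 argument correct for general $G$.
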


Base changing the inertia stack diagram (7.0.1) with $[\Spec A/G]\to \ms{X}$ yields the diagram,
$$\xymatrix{\ms{I}_K=\coprod_{g\in G} [(\Spec K)^g/G] \ar[r] \ar[d]& \coprod_{g\in G} [(\Spec A)^g/G] =\ms{I}_A \ar[d] \\ [\Spec K/G]\ar[r]^-{j} & [\Spec A/G]},$$
where for all $g\in G$, $(\Spec K)^g=\Spec K$ if and only if $(\Spec A)^g=\Spec A$, otherwise $(\Spec K)^g=\emptyset$. Indeed, $(\Spec A)^g=\Spec A$ implies $(\Spec K)^g=\Spec K$, and if $(\Spec K)^g=\Spec K$, then we have $\Spec K\subseteq (\Spec A)^g \subseteq \Spec A$. But because $(\Spec A)^g$ is the fiber product of the diagram
$$\xymatrix{(\Spec A)^g \ar[r] \ar[d] & \Spec A \ar[d]^-{(\mathrm{id}, g)} \\ \Spec A \ar[r]^-{\Delta} & \Spec A \times \Spec A  }, $$
and $\ms{X}$ is separated, so the diagonal is closed,
we have $(\Spec A)^g$ closed in $\Spec A$, so $(\Spec A)^g=\Spec A$.

Let $H=\{g\in G: (\Spec A)^g=\Spec A\}$, a subgroup scheme of $G$. We see that 
$$H=\{g\in G : g=\mathrm{id}_A \in \Aut(A)\} = \mathrm{ker}(G\to \Aut(A)),$$
so $H$ is normal in $G$. 
Then we can write 
$$\ms{I}_K=\coprod_{h\in H} [\Spec K/G]=[\Spec K/G]\times H,$$ 
and we can write the closure of the image of $\ms{I}_K$ in $\ms{I}_A$ as 
$$\ms{H}_A= \coprod_{h\in H} [\Spec A/G]=[\Spec A/G]\times H.$$
Thus, $\ms{H}$ is locally constant. We can now apply Theorem 1.2 to $p:\ms X\to \ms Y$ to get the desired exact sequence. Moreover, since $\ms X$ is tame, $\ms Y$ is also tame, and in particular, we can apply Theorem 1.1 to realize $\Pic(\ms Y)$ as an extension of two explicit groups.
\qed

\section{Examples}
\begin{ex}
Consider the moduli stack of elliptic curves, $\ms{M}_{1,1}$, over a field $k$ of characteristic not equal to 2 or 3. Objects of $\ms{M}_{1,1}$ are pairs $(T, (f:E\to T, e:T\to E))$, where $T$ is a $k$-scheme, and $(E,e)$ is an elliptic curve over $T$, that is, $f$ is a smooth proper morphism with a section $e$ whose geometric fibers are smooth proper genus 1 curves with a point.

A morphism in $\ms{M}_{1,1}$, $(s, g): (T', (E',e'))\to (T, (E,e))$ is a pair $(s,f)$ where $s:T'\to T$ is a $k$-morphism, and $g:E'\to E$ such that the diagram 
$$\xymatrix{ E' \ar[r]^-{g} \ar[d] & E \ar[d] \\ T' \ar[r]^-{s} & T}$$
is cartesian and $g\circ e'=e\circ s$. 

The coarse moduli space of $\ms{M}_{1,1}$ is $\mathbb{A}^1_j=\Spec k[j]$, where $j$ corresponds to the $j$-invariant of $E$. For $j\in \mathbb{A}^1_j$, let $E_j$ be the elliptic curve with $j$-invariant $j$, and for $j\neq 0, 1728$, $\Aut(E_j)=\mu_2$, but for $j=0$, $E_0: y^2=x^3+1$ has $\Aut(E_0)=\mu_6$, and for $j=1728$, $E_{1728}:y^2=x^3+x$ has $\Aut(E_{1728})=\mu_4$. 

We compute the inertia stack $\ms{I}$ of $\ms{M}_{1,1}$. From the discussion above, we see that the inertia stack consists of two copies of $\ms{M}_{1,1}$, where one copy corresponds to the objects of $\ms{M}_{1,1}$ together with the trivial automorphism and the other copy corresponds to the objects of $\ms{M}_{1,1}$ together with the involution. Additionally, we have 4 more points isomorphic to $B\mu_6$, corresponding to the point $E_0$ of $\ms{M}_{1,1}$ together with the remaining automorphisms of $E_0$, $\mu_6 \setminus \mu_2$, and 2 more points isomorphic to $B\mu_4$, corresponding to the point $E_{1728}$ with its remaining automorphisms, $\mu_4 \setminus \mu_2$. Thus, 
\begin{align}
\ms{I}\cong \ms{M}_{1,1}\coprod \ms{M}_{1,1}\coprod \left(\coprod_4 B\mu_6 \right) \coprod \left(\coprod_2 B\mu_4\right). 
\end{align}

Now, we would like to find a finite flat subgroup scheme $\ms{H}$, normal in $\ms{I}$, and take the rigidification of $\ms{M}_{1,1}$ with respect to $\ms{H}$. Note that $\ms{I}$ is already finite over $\ms{M}_{1,1}$, but it is not flat, as the length of the fibers is not constant. We find an open substack $\ms{U}$ of $\ms{M}_{1,1}$ where $\ms{I}(\ms{U})$ is finite flat over $\ms{U}$. From the computation of the inertia stack, we see that we can take $\ms{U}=\ms{M}_{1,1}\setminus \{E_0, E_{1728} \}$. Equivalently, this is the cartesian product of $\ms{M}_{1,1}$ with $U=\mathbb{A}^1_j \setminus \{0, 1728\}$ over $\mathbb{A}^1_j$. Since we have a section $U\to \ms{M}_{1,1}$, $\ms{U}\cong B_U(\mu_2)$. Then $\ms{I}_{\ms{U}}= \ms U\times \mu_2=\ms U\coprod \ms U$ inside the first two components of $\ms I$ as in (8.1.1). Clearly, this is finite flat over $\ms{U}$. Then taking $\ms{H}$ to be the closure of the image of $\ms{I}_{\ms{U}}$ in $\ms{I}$, we get $\ms{H}=\ms{M}_{1,1}\times \mu_2=\ms M_{1,1}\coprod \ms M_{1,1}$, the first two components of $\ms I$ as in $(8.1.1)$, which is also clearly finite flat over $\ms{M}_{1,1}$. Additionally, all the groups appearing in this example have been abelian, so $\ms{H}$ is also normal in $\ms{I}$, and we can now take the rigidification of $\ms{M}_{1,1}$ with respect to $\ms{H}$ to get $\ms{N}_{1,1}$, the root stack of $\mathbb{A}^1_j$ of order 2 at $j=1728$ and order 3 at $j=0$, such that $\pi:\ms M_{1,1}\to \ms N_{1,1}$ is a $\mu_2$-gerbe. 

We get an exact sequence 
$$\xymatrix{0\ar[r] & \Pic(\ms N_{1,1}) \ar[r]^-{\pi^*} & \Pic(\ms M_{1,1}) \ar[r]^-{\chi} & \Hom(\mu_2, \mathbb{G}_m)\ar[rr]^-{(-)_*[\ms M_{1,1}]} && H^2(\ms N_{1,1},\mathbb{G}_m)}.$$

Since we have that $\ms U \cong B_U(\mu_2)$ is a trivial gerbe and $\ms U$ is dense in $\ms M_{1,1}$, we have that $\ms M_{1,1}\cong B_{\ms N_{1,1}}(\mu_2)$ is a trivial gerbe. Thus, the pushforward map $(-)_*[\ms M_{1,1}]$ is the zero map, so in fact the sequence
$$\xymatrix{0\ar[r] & \Pic(\ms N_{1,1}) \ar[r] & \Pic(\ms M_{1,1}) \ar[r] & \Hom(\mu_2, \mathbb{G}_m)=\Z/2\Z \ar[r] & 0}$$
is exact. 

By Theorem 1.1, $\Pic(\ms N_{1,1})$ can be described as the pushout of the following diagram,
$$\xymatrix{\Z^2 \ar[r]^-{(\cdot 2, \cdot 3)} \ar[d]_-{\phi} & \Z^2 \ar[d] \\ \Pic(\mathbb{A}^1_j) \ar[r]^-{\pi^*} & \Pic(\ms N_{1,1})},$$
so $\Pic(\ms N_{1,1})=\Z/2\Z\oplus \Z/3\Z$, since $\Pic(\mathbb{A}^1_j)=0$. Moreover, the ideal sheaves of $\ms N_{1,1}$ at the points $j=1728$ and $j=0$ give nontrivial line bundles whose 4th and and 6th powers, respectively give a line bundle which must come from the coarse space, $\mathbb{A}^1_j$, by Lemma 3.3, with trivial Picard group. Thus $\Pic(\ms M_{1,1})$ has an element of order 4 and order 6, so must be $\Z/12\Z$. 
\end{ex}

\begin{ex}
    Let $P$ be the vanishing locus of the equation $x^2+y^2+z^2$ inside $\mathbb{P}^2_{\mathbb{R}}$, that is, 
    $$P=\Proj(\mathbb{R}[x,y,z]/(x^2+y^2+z^2))$$
    or
    $$P=\left[\dfrac{\Spec(\mathbb{R}[x,y,z])\backslash \{0\}}{(x^2+y^2+z^2)}/\mathbb{G}_m\right].$$
    This curve has no $\mathbb{R}$-points, but the base change over the inclusion $\mathbb{R}\to \mathbb{C}$ 
    $$P_{\mathbb{C}}=\Proj(\mathbb{C}[x,y,z]/(x^2+y^2+z^2))$$
    is isomorphic to $\mathbb{P}^1_{\mathbb{C}}$, with inclusion into $\mathbb{P}_{\mathbb{C}}^2$ corresponding to a line bundle of degree 2, and the diagram
    $$\xymatrix{P_{\mathbb{C}} \ar[r] \ar[d] & P \ar[d] \\ \mathbb{P}_{\mathbb{C}}^2 \ar[r] & \mathbb{P}_{\mathbb{R}}^2 }$$
    commutes. 
    Thus, there exists a line bundle of degree 2 on $P$ defining the inclusion of $P$ into $\mathbb{P}_{\mathbb{R}}^2$, call it $\ms L_P$. However, there is no line bundle of degree 1 on $P$. If there were, it would pull back to the degree 1 line bundle on $\mathbb{P}^1_{\mathbb{C}}$, so it would have two global sections, giving an isomorphism of $P$ with $\mathbb{P}_{\mathbb{R}}^1$, a contradiction.  

    Let $\ms P$ be the stack with objects 
    $$(f:T\to P, (\ms M,\epsilon)),$$ 
    where $T$ is a $P$-scheme, $\ms M$ is a line bundle on $T$, and $\epsilon:\ms M^{\otimes 2}\to f^*\ms L_P$ is isomorphism of line bundles on $T$. Morphisms are pairs 
    $$(t,\sigma):(f':T'\to P, (\ms M',\epsilon'))\to (f:T\to P, (\ms M,\epsilon)),$$
    where $t:T'\to T$ is a morphism of $P$-schemes, and $\sigma:\ms M'\to t^*\ms M$ is an isomorphism of line bundles on $T'$ such that the diagram
    $$\xymatrix{\ms M'^{\otimes 2}\ar[rr]^-{\sigma^{\otimes 2}} \ar[dr]_-{\epsilon'} &&  t^*\ms M^{\otimes 2} \ar[dl]^-{t^*\epsilon} \\ & f'^*\ms L_P & }$$
    commutes. 

    We show that $\ms P$ is $\mu_2$-gerbe over $P$. For any $f:T\to P$, consider the covering $t:T_\mathbb{C} \to T$ coming from the fiber product diagram
    \begin{align}
    \xymatrix{T_{\mathbb{C}} \ar[r]^-{t} \ar[d]^-{g} & T \ar[d]^-{f} \\ P_{\mathbb{C}} \ar[r] & P}.
    \end{align}
    Then $(f\circ t)^*\ms L_P=g^*\ms O_{\mathbb{P}^1_{\mathbb{C}}}(2)=g^*\ms O_{\mathbb{P}_{\mathbb{C}}^1}^{\otimes 2}$, so $(f\circ t_i:T_{\mathbb{C}}\to P, (g^*\ms O_{\mathbb{P}_{\mathbb{C}}^1}, \mathrm{id}))$ is an element of $\ms P(T_{\mathbb{C}})$, and thus $\ms P(T_{\mathbb{C}})$ is nonempty. 

    Let $(\ms M',\epsilon'), (\ms M, \epsilon)\in \ms P(f:T\to P)$. We choose a trivializing cover so that we may assume $\ms M=\ms M'=\ms O_T$. The isomorphism 
    $$\xymatrix{\ms O_T\cong \ms O_T^{\otimes 2}\ar[r]^-{\epsilon} & f^*\ms L_P & \ms O_T^{\otimes 2}\cong \ms O_T\ar[l]_-{\epsilon'}}$$
    is given by multiplication by some $u\in \ms O_T^*.$ \'Etale locally, $u$ has a square root, and on this cover, $(\ms M',\epsilon')\cong (\ms M,\epsilon)$. 

    Finally, for any $(\ms M,\epsilon)\in \ms P(f:T\to P)$, $\uAut_T(\ms M,\epsilon)$ is the set of automorphisms $\sigma:\ms M\to \ms M$ such that $\epsilon=\epsilon\circ \sigma^{\otimes 2}:\ms M^{\otimes 2}\to f^*\ms L_P$, so $\uAut_T(\ms M,\epsilon)=\mu_2$.

    Thus, $\pi:\ms P\to P$ is a $\mu_2$-gerbe, and let $[\ms P]$ be the class of the gerbe $\ms P$ in $H^2(P,\mu_2)$. Since $\ms L_P$ does not have a global square root on $P$, this gerbe is nontrivial. 
    
    By Theorem 1.2, we have an exact sequence
    $$\xymatrix{0\ar[r] & \Pic(P) \ar[r]^-{\pi^*} & \Pic(\ms P) \ar[r]^-{\chi} & \Hom(\mu_2,\mathbb{G}_m) \ar[r]^-{(-)_*[\ms P]} & H^2(P,\mathbb{G}_m)}.$$
    Although $\ms P$ is not the trivial gerbe, the map $(-)_*[\ms P]$ is still the zero map. Clearly the trivial morphism $1:\mu_2\to \mathbb{G}_m$ is such that $1_*[\ms P]=B_P(\mathbb{G}_m)$. For the inclusion $\iota:\mu_2\to \mathbb{G}_m$, we have the short exact sequence 
    \begin{align}
    \xymatrix{0\ar[r] & \mu_2 \ar[r]^-{\iota} & \mathbb{G}_m \ar[r]^-{\alpha} & \mathbb{G}_m \ar[r] & 0},
    \end{align}
    with $\alpha:x\mapsto x^2$, which gives us the exact sequence
    $$\xymatrix{\ & \ar[r] H^1(P,\mathbb{G}_m) \ar[r]^-{\alpha} & H^1(P,\mathbb{G}_m) \ar[r]^-{\delta} & H^2(P,\mu_2) \ar[r]^-{\iota_*} & H^2(P,\mathbb{G}_m)}.$$
    \begin{lem}
    The image of $\ms L_P\in H^1(P,\mathbb{G}_m)$ under $\delta$ is $[\ms P]$.
    \end{lem}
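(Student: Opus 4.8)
The plan is to compute both classes through explicit Čech cocycles on a common étale cover and observe that the resulting $2$-cocycles coincide. First I would recall the definition of the boundary map $\delta$ attached to the Kummer sequence (8.2.2). Since $\cha \mathbb{R}=0\neq 2$, the squaring map $\alpha:\mathbb{G}_m\to\mathbb{G}_m$ is surjective as a sheaf in the étale topology, so the sequence is exact and $\delta$ is defined. Choosing an étale cover $\{U_i\to P\}$ on which $\ms L_P$ trivializes via isomorphisms $\phi_i:\ms L_P|_{U_i}\cong \ms O_{U_i}$, I would represent $\ms L_P\in H^1(P,\mathbb{G}_m)=\Pic(P)$ by the cocycle $g_{ij}=\phi_i\phi_j^{-1}\in\mathbb{G}_m(U_{ij})$. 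After passing to a refinement on which each $g_{ij}$ admits a square root, choose $h_{ij}$ with $h_{ij}^2=g_{ij}$. The standard description of the boundary map then gives that $\delta(\ms L_P)$ is represented by the $2$-cocycle $c_{ijk}:=h_{ij}h_{jk}h_{ki}$, which lands in $\mu_2$ because $c_{ijk}^2=g_{ij}g_{jk}g_{ki}=1$ by the cocycle condition on $\{g_{ij}\}$.

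Next I would compute $[\ms P]$ on the same cover. On each $U_i$ the trivialization $\phi_i$ yields a tautological object $s_i=(\ms O_{U_i},\epsilon_i)\in\ms P(U_i)$, where $\epsilon_i:\ms O_{U_i}^{\otimes 2}=\ms O_{U_i}\cong\ms L_P|_{U_i}$ is $\phi_i^{-1}$. By the description of morphisms in $\ms P$, an isomorphism $s_i|_{U_{ij}}\cong s_j|_{U_{ij}}$ is a unit $\sigma_{ij}\in\mathbb{G}_m(U_{ij})$ whose square intertwines $\epsilon_i$ and $\epsilon_j$; unwinding the defining commutative triangle forces $\sigma_{ij}^2=g_{ij}$, so on the refinement above we may take $\sigma_{ij}=h_{ij}$. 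The class $[\ms P]\in H^2(P,\mu_2)$ is, by definition of the cohomology class of a gerbe, measured by the failure of these comparison isomorphisms to satisfy the cocycle condition, namely $\sigma_{ij}\sigma_{jk}\sigma_{ki}=h_{ij}h_{jk}h_{ki}=c_{ijk}$.

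Since the two $\mu_2$-valued $2$-cocycles agree, $[\ms P]=\delta(\ms L_P)$, which proves the lemma. I expect the main obstacle to be bookkeeping rather than substance: matching the orientation conventions between the boundary map $\delta$ in abelian fppf cohomology and the explicit cocycle representing the class of a $\mu_2$-gerbe (in particular whether one obtains $g_{ij}$ or $g_{ij}^{-1}$, hence $c_{ijk}$ or its inverse, though both represent the same class in $H^2(P,\mu_2)$ as $\mu_2$ is $2$-torsion), and confirming that the tautological objects $s_i$ together with their comparison isomorphisms faithfully reproduce the Čech datum of $\ms L_P$. A conceptually cleaner alternative, which I would point out, is that $\ms P$ is by construction the gerbe of square roots of $\ms L_P$, and the identification of the Kummer boundary map with the gerbe-of-$n$th-roots construction is a general fact; the cocycle computation above is simply its concrete incarnation in this example.
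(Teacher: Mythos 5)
Your proof is correct, but it takes a genuinely different route from the paper. You compute both classes as explicit \v{C}ech $2$-cocycles: the boundary $\delta(\ms L_P)$ via local square roots $h_{ij}$ of the transition functions $g_{ij}$, and $[\ms P]$ via the tautological local objects $(\ms O_{U_i},\phi_i^{-1})$ and their comparison isomorphisms, observing that both yield $h_{ij}h_{jk}h_{ki}$. The paper instead works entirely at the level of stacks: it writes down the $\mathbb{G}_m$-torsor $\mathbb{L}_P$ associated to $\ms L_P$, describes $\delta(\mathbb{L}_P)$ concretely as the gerbe $\ms G$ of pairs $(\mathbb{L},\sigma:\alpha_*\mathbb{L}\cong\mathbb{L}_P|_T)$ following \cite[12.2.5]{Olssonbook}, constructs an explicit morphism of $\mu_2$-gerbes $\ms P\to\ms G$ sending $(\ms M,\epsilon)$ to $(\mathbb{M},\sigma_\epsilon)$, and concludes by the rigidity fact that any morphism of gerbes banded by the same group is an isomorphism \cite[12.2.4]{Olssonbook}. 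The paper's argument buys canonicity: no covers, no refinements, and no appeal to the agreement of \v{C}ech and derived-functor cohomology. Your argument buys concreteness and transparency, but it silently uses two standard comparison facts that you should at least cite: (i) that the classification bijection of Theorem 4.11 is computed by the \v{C}ech cocycle of local objects and comparison isomorphisms (this is how the bijection is constructed in \cite[12.2.8]{Olssonbook}, so it is available), and (ii) that \v{C}ech cohomology computes \'etale $H^2$ on $P$, which holds since $P$ is quasi-projective over a field (Artin's theorem); without (ii) the cocycle manipulations do not a priori identify classes in derived-functor cohomology. Your closing observation, that $\ms P$ is the gerbe of square roots of $\ms L_P$ and that the Kummer boundary map is realized by the root-gerbe construction, is precisely the conceptual content that the paper's torsor-level proof makes rigorous.
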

     \begin{proof}
        The $\mathbb{G}_m$-torsor over $P$ associated to $\ms L_P$ is the principle homogeneous space $\mathbb{L}_P$, where $$\mathbb{L}_P=\uSpec_P(\oplus_{i\in\Z}\ms L_P^{\otimes i}),$$ with action of $\mathbb{G}_m$ given as follows. 

        Write 
        \begin{align*}
        \mathbb{L}_P(f:T\to P)=& \{\rho:f^*(\oplus_{i\in\Z} f^*\ms L_P^{\otimes i}) \to \ms O_T\} \\
        =& \{\oplus_{i\in\Z} \rho_i: \otimes_{i\in\Z} f^*\ms L_P ^{\otimes i} \to \ms O_T\}.
        \end{align*}
        Then $u\in\mathbb{G}_m(T)$ acts on $\rho\in \mathbb{L}_P(T)$ by $u\cdot \rho=\oplus_{i\in\Z} u^i \rho_i$.

        This indeed defines a $\mathbb{G}_m$-torsor. For any $f:T\to P$, take the covering $t:T_{\mathbb{C}}\to T$ from the fiber diagram $(8.2.1)$, $(f\circ t)^* \ms L_P^{\otimes i}\cong g^*\ms P_{\mathbb{P}_{\mathbb{C}}^1}^{\otimes 2i}$. Then the multiplication map $\oplus_{i\in\Z}(f\circ t)^*\ms L_P^\otimes i\to \ms O_{T_{\mathbb{C}}}$ is an element of $\mathbb{L}_P(f\circ t:T_{\mathbb{C}}\to P)$, which is thus nonempty.

        Given two elements $\rho,\rho': \oplus_{i\in\Z} f^*\ms L_P^{\otimes i}\to \ms O_T$ in $\mathbb{L}_P(f:T\to P)$, there exists a unique $u\in\mathbb{G}_m(T)$ such that $\rho_1=u\rho_1':f^*\ms L_P\to \ms O_T$, and this is compatible with the tensor product structure, so also $\rho=u\cdot \rho': \oplus_{i\in\Z}f^*\ms L_P^{\otimes i}\to \ms O_T$.

        Now let $\ms G$ be the image of $\mathbb{L}_P$ under the boundary map $\delta$ \cite[12.2.5]{Olssonbook}. This is the $\mu_2$-gerbe with objects 
        $$(f:T\to P,\ \mathbb{L}, \ \sigma: \alpha_*\mathbb{L}\to \mathbb{L}_P|_T),$$
        where $T$ is a $P$-scheme, $\mathbb{L}$ is a $\mathbb{G}_m$-torsor on $T$, and $\sigma$ is an isomorphism of $\mathbb{G}_m$-torsors on $T$.
        Morphisms are pairs 
        $$(t,\tau): (f':T'\to P,\ \mathbb{L}',\ \sigma': \alpha_*\mathbb{L}'\to \mathbb{L}_P|_T)\to (f:T\to P, \ \mathbb{L}, \ \sigma: \alpha_*\mathbb{L}\to \mathbb{L}_P|_T)$$
        where $t:T'\to T$ is a $P$-morphism, and $tau$ is an isomorphism of $\mathbb{G}_m$-torsors such that the diagram 
        $$\xymatrix{\alpha_*\mathbb{L}'\ar[rr]^-{\alpha_*\tau} \ar[dr]^-{\sigma'} && t^*\alpha_*\mathbb{L}\ar[dl]_-{t^*\sigma} \\ & \mathbb{L}_P |_T &}$$
        commutes.
        We define a morphism of $\mu_2$-gerbes $\phi:\ms P\to \ms G$ to conclude that $\ms P\cong \ms G$ \cite[12.2.4]{Olssonbook}. 

        Let $\phi$ be the map given by 
        $$(f:T\to P, (\ms M,\epsilon))\mapsto(f:T\to P,\ \mathbb{M}, \ \sigma_{\epsilon}:\alpha_*\mathbb{M}\cong \mathbb{L}_P|_T),$$
        where $\mathbb{M}=\uSpec_T(\oplus_{i\in\Z} \ms M^{\otimes i})$, and $\mathbb{G}_m$ acts analogously as on $\mathbb{L}_P$. 
        To understand $\sigma_{\epsilon}$, we must first understand $\alpha_*\mathbb{M}$. Since $\alpha$ is surjective, $\alpha_*\mathbb{M}$ is given by $\mathbb{M}/ \mu_2$, with action of $\mathbb{G}_m$ given by $u\cdot \overline{\lambda}=\overline{u^{1/2}\lambda}$. Note that the choice of square-root is unambiguous here. Taking the quotient of $\mathbb{M}$ by the $\mu_2$ action makes any element $\lambda: \oplus_{i\in\Z}\ms M^{\otimes i}\to \ms O_T$ well-defined only up to $i\in 2\Z$, so we can write elements of $\mathbb{M}$ as $\lambda':t^*\oplus_{i\in\Z}\ms M^{\otimes 2i}\to \ms O_S$, with $\mathbb{G}_m$-action given by $u\cdot \lambda=\oplus_{i\in\Z}u^i\lambda_i':\oplus_{i\in\Z} t^*\ms M^{\otimes 2i}\to \ms O_S$. Then $\sigma_{\epsilon}$ is simply induced by the isomorphism of line bundles $\epsilon:\ms M^{\otimes 2}\to f^*\ms L_P$, and $\sigma_{\epsilon}$ is an isomorphism of $\mathbb{G}_m$-torsors as it is clearly $\mathbb{G}_m$-equivariant. 

        A morphism $(t,\sigma)$ in $\ms P$ maps to the morphism $(t,\tau)$ with $\tau$ induced by $\sigma$. This is clearly a morphism of $\mu_2$-gerbes, so is an isomorphism. 
        \end{proof}
    By the lemma, it follows that $\iota_*[\ms P]=B_P(\mathbb{G}_m)$, and we conclude that 
    $$(-)_*[\ms P]: \Hom(\mu_2,\mathbb{G}_m)\to H^2(P,\mathbb{G}_m)$$
    is the zero map and that
     and the sequence
     $$\xymatrix{0 \ar[r] & \Pic(P)\cong \Z \ar[r]^-{\pi^*} & \Pic(\ms P) \ar[r]^-{\chi} & \Hom(\mu_2,\mathbb{G}_m)\cong \Z/2\Z \ar[r] &0}$$
     is exact.    

    Using the Yoneda lemma, the identity map $\mathrm{id}:\ms P\to \ms P$ corresponds to the object $(\pi:\ms P\to P, (\ms L_{\ms P}, \epsilon_{\ms P}))$, with $\ms L_{\ms P}$ a universal square root of $\ms L_P$ on $\ms P$. Thus, we have a line bundle on $\ms P$ whose square is the generator of $\Pic(P)$. Thus $\ms L_{\ms P}$ generates $\Pic(\ms P)$, and $\Pic(\ms P)\cong \Z$.   
\end{ex}
\begin{ex}    
    Using the same set-up as in the previous example, now let $\ms P'=B_P(\mu_2)$ be the trivial $\mu_2$-gerbe over $P$. We have the same short exact sequence
     $$\xymatrix{0 \ar[r] & \Pic(P)\cong \Z \ar[r]^-{\pi^*} & \Pic(\ms P') \ar[r]^-{\chi} & \Hom(\mu_2,\mathbb{G}_m)\cong \Z/2\Z \ar[r] &0},$$
     but $\Pic(\ms P')\cong \Z\times \Z/2\Z$. 
\end{ex}
\begin{rmk}
    The stacky curves in the last two examples have different Picard groups, but the Picard groups are given by an extension of the same two groups. This exemplifies the fact that the Picard group of a stacky curve depends on its structure as a gerbe. 
\end{rmk}
\begin{ex}
    Let $(E,e)$ be an elliptic curve over a field $k$ with structure morphism $f:E\to \Spec k$ and section $e:\Spec k\to E$. The section $e$ induces a pullback map $e^*: H^2(E,\mathbb{G}_m) \to H^2(k,\mathbb{G}_m)$ which splits the pullback of the structure morphism $f^*: H^2(k,\mathbb{G}_m)\to H^2(E,\mathbb{G}_m)$, and similarly for $H^3$. 

    We can use the Leray Spectral Sequence 
    $$E_2^{p,q}=H^p(k,R^qf_*\mathbb{G}_m)\Rightarrow H^{p+q}(E,\mathbb{G}_m)$$
    to compute $H^2(E,\mathbb{G}_m)$. We have $R^0f_*\mathbb{G}_m=\mathbb{G}_m$, $R^1f_*\mathbb{G}_m=\Pic_{E/k}$, the relative Picard scheme of $E$ over $k$, and $R^if_*\mathbb{G}_m=0$ for all $i\geq 2$ by Tsen's Theorem. Then the spectral sequence results in an exact sequence describing $H^2(E,\mathbb{G}_m)$,
    $$\xymatrix{\Pic_{E/k}(k)\ar[r]^-{\phi} & H^2(k,\mathbb{G}_m) \ar[r]^{f^*} & H^2(E,\mathbb{G}_m) \ar[r] & H^1(k, \Pic_{E/k}) \ar[r]^{\phi'} & H^3(k, \mathbb{G}_m)},$$
    and a filtration
    $$\xymatrix{0\ar[r] & H^2(k,\mathbb{G}_m)/\mathrm{im}(\phi) \ar[r]& H^2(E,\mathbb{G}_m) \ar[r] & \ker(\phi') \ar[r] & 0}.$$
    Similarly, for $H^3(E,\mathbb{G}_m)$, we have an exact sequence 
    $$\xymatrix{H^1(k,\Pic_{E/k})\ar[r]^-{\phi'} & H^3(k,\mathbb{G}_m) \ar[r]^-{f^*} & H^3(E,\mathbb{G}_m}).$$
    Putting the two sequences together, we have an exact sequence
     $$\xymatrix{\Pic_{E/k}(k)\ar[r]^-{\phi} & H^2(k,\mathbb{G}_m) \ar[r]^{f^*} & H^2(E,\mathbb{G}_m) &&}$$ $$\xymatrix{&&\ar[r] & H^1(k, \Pic_{E/k}) \ar[r]^{\phi'} & H^3(k, \mathbb{G}_m) \ar[r]^-{f^*} & H^3(E,\mathbb{G}_m)}.$$
     The section $e:\Spec k\to E$ gives us two splittings of this sequence so that both $f^*$ maps are injective. We conclude that $\phi=\phi'=0$, and 
     $$\xymatrix{0 \ar[r] & H^2(k,\mathbb{G}_m) \ar[r]^-{f^*} & H^2(E,\mathbb{G}_m) \ar[r] & H^1(k,\Pic_{E/k}) \ar[r] & 0}$$ 
     is a split exact sequence. 

     Now again using the Kummer sequence (8.2.2), we construct an example of a stacky curve $\ms X$ which is a $\mu_2$-gerbe over $E$ with $\chi:\Pic(\ms X)\to \Hom(\mu_2,\mathbb{G}_m)$ not surjective, unlike the previous examples. Let $k=\mathbb{R}$, so that $H^2(k,\mathbb{G}_m)=\mu_2$, with nontrivial element the class of the Hamiltonian algebra $\mathbb{H}$. Let $[\ms G_{\mathbb{H}}]$ be the image of $[\mathbb{H}]$ under the inclusion $f^*:H^2(k,\mathbb{G}_m)\to H^2(E,\mathbb{G}_m)$. 
     Since $[\mathbb{H}]$ has order 2 in $H^2(k,\mathbb{G}_m)$, $\alpha_*[\ms G_{\mathbb{H}}]=[B_E(\mathbb{G}_m)]$ in $H^2(E,\mathbb{G}_m)$. Thus, $[\ms G_{\mathbb{H}}]$ came from an element $[\ms X]\in H^2(E,\mu_2)$. Then $\ms X$ is a stacky curve which is a $\mu_2$-gerbe over $E$, and we have an exact sequence 
     $$\xymatrix{0 \ar[r] & \Pic(E) \ar[r] & \Pic(\ms X) \ar[r]^-{\chi} & \Hom(\mu_2,\mathbb{G}_m) \ar[r]^-{(-)_*[\ms X]} & H^2(E,\mathbb{G}_m)}.$$
     For the inclusion $\iota:\mu_2\to \mathbb{G}_m$, we have $\iota_*[\ms X]=[\ms G_{\mathbb{H}}]$, so $(-)_*$ is injective, thus $\chi$ is the zero map, and $\Pic(\ms X)\cong \Pic(E)$. In this example, the gerbe structure of $\ms X$ did not contribute the the Picard group of $\ms X$ over $E$. We say that there does not exist any twisted sheaves on $\ms X$. See \cite{twisted} for more on twisted sheaves.
\end{ex}

\bibliographystyle{alpha}
\bibliography{bibliography}

\end{document}